\documentclass[a4paper,11pt,reqno]{amsart}
\usepackage[utf8]{inputenc}
\usepackage{pdfsync}
\usepackage{verbatim}
\usepackage[linktocpage]{hyperref}
\usepackage{amssymb, paralist, xspace, graphicx, url, amscd, euscript, mathrsfs,stmaryrd,epic,eepic,color}
\usepackage{enumitem}
\usepackage[all]{xy}
\usepackage{amsthm}

\usepackage{tikz}
\usepackage{tikz-cd}
\usetikzlibrary{calc}
\usetikzlibrary{arrows}
\usetikzlibrary{matrix}
\usepackage{lipsum}
\SelectTips{cm}{}
\usepackage[margin=1in]{geometry}




\numberwithin{equation}{section}

\setcounter{tocdepth}1
\numberwithin{subsection}{section}

\allowdisplaybreaks[1]


\newtheorem*{namedtheorem}{\theoremname}
\newcommand{\theoremname}{testing}

\newtheorem{theorem}{Theorem}[subsection]
\newtheorem{proposition}[subsection]{Proposition}
\newtheorem{conjecture}[subsection]{Conjecture}
\newtheorem{corollary}[subsection]{Corollary}

\newtheorem{lemma}[subsection]{Lemma}
\theoremstyle{definition}
\newtheorem{definition}[subsection]{Definition}

\newtheorem{example}[subsection]{Example}
\newtheorem{remark}[subsection]{Remark}

\theoremstyle{remark}

\linespread{1.2}


\newcommand\cA{\mathcal{A}}
\newcommand\cB{\mathcal{B}}
\newcommand\cC{\mathcal{C}}

\newcommand\cF{\mathcal{F}}

\newcommand\cH{\mathcal{H}}

\newcommand\cL{\mathcal{L}}
\newcommand\cM{\mathcal{M}}
\newcommand\cN{\mathcal{N}}
\newcommand\cO{\mathcal{O}}
\newcommand\cP{\mathcal{P}}

\newcommand\cT{\mathcal{T}}

\newcommand\cX{\mathcal{X}}
\newcommand\cY{\mathcal{Y}}
\newcommand\cZ{\mathcal{Z}}

\newcommand\uE{\underline{E}}

\renewcommand\AA{\mathbb{A}}

\newcommand\CC{\mathbb{C}}

\newcommand\DD{\mathbb{D}}

\newcommand\FF{\mathbb{F}}

\newcommand\HH{\mathbb{H}}

\newcommand\KK{\mathbf{K}}

\newcommand\PP{\mathbb{P}}
\newcommand\QQ{\mathbb{Q}}
\newcommand\RR{\mathbb{R}}

\newcommand\ZZ{\mathbb{Z}}

\newcommand\bF{\mathbf{F}}

\newcommand\rD{\mathrm{D}}

\newcommand\rH{\mathrm{H}}

\newcommand\rO{\mathrm{O}}

\newcommand\rS{\mathrm{S}}

\newcommand\fX{\mathfrak{X}}

\newcommand{\DB}{\mathrm{DB}}


\newcommand\Sh{{\rm Sh}}
\newcommand\Spec{{\rm Spec~}}
\newcommand\Pic{{\rm Pic}}

\newcommand{\SO}{{\rm SO}}

\newcommand{\Hom}{{\rm Hom}}
\newcommand{\bv}{{\bf v}}

\newcommand{\Mp}{{\rm Mp}}
\newcommand{\Sp}{{\rm Sp}}

\newcommand{\SL}{{\rm SL}}

\newcommand{\CH}{{\rm CH}}
\newcommand{\cl}{{\rm cl}}

\newcommand{\cosk}{{\rm cosk}}
\newcommand{\et}{{\rm \acute{e}t}}
\newcommand{\Et}{{\rm \acute{E}t}}
\newcommand{\an}{{\rm an}}
\newcommand{\Zar}{{\rm Zar}}
\newcommand\scF{\mathscr{F}}
\newcommand\scS{\mathscr{S}}
\newcommand\scX{\mathscr{X}}
\newcommand{\q}{/\!\!/}


\theoremstyle{plain}
\theoremstyle{definition}

\begin{document}
	
\title[DB cohomology of the universal K3 surface]{Deligne-Beilinson cohomology of the universal K3 surface}

\author{Zhiyuan Li}
\address{Shanghai Center for Mathematical Sciences \\ Fudan University\\
220 Handan Road, Shanghai, 200433 China\\
	}
\email{zhiyuan\_li@fudan.edu.cn}

\author{Xun Zhang}
\address{Math Department \\ Fudan University\\
220 Handan Road, Shanghai, 200433 China\\
}
\email{18110180041@fudan.edu.cn}

\begin{abstract}
O'Grady's generalized Franchetta conjecture (GFC) is concerned with codimension 2 algebraic cycles on universal polarized K3 surfaces. In \cite{BL17},  this conjecture has been studied in the Betti cohomology groups.  Following a suggestion of Voisin, we investigate this problem in the  Deligne-Beilinson (DB) cohomology groups. In this paper,  we develop the theory of Deligne-Beilinson cohomology groups on separated (smooth) Deligne-Mumford stacks. Using the automorphic cohomology group and Noether-Lefschetz theory, we compute the 4-th DB-cohomology group of universal oriented polarized K3 surfaces with at worst an $A_1$-singularity and show that GFC for such family holds in DB-cohomology. In particular, this confirms O'Grady's original conjecture in DB cohomology. 
\end{abstract}

\subjclass[2020]{14J28; 14C25}
\keywords{generalized Franchetta conjecture, Delinge-Beilinson cohomology, universal polarized oriented K3}

\maketitle
	
\section{Introduction}
Zero-cycles on K3 surfaces have been studied by Beauville and Voisin in \cite{BV04}. They have shown that if $S$ is a K3 surface, then there is a canonical zero-cycle $c_S$ on $S$ that satisfies the following two properties:
 \begin{enumerate}
 	\item The intersection of two divisor classes on $S$ lies in $\mathbb{Z} c_S \subset \CH_0 (S)$.
 	\item The second Chern class $c_2 (T_S )$ equals $24 c_S \in \CH_0 (S)$.
 \end{enumerate}
This canonical cycle is called Beauville-Voisin class in $\CH_0(S)$. A natural question is to investigate the codimension 2 cycles on the universal K3 surfaces. More precisely,  let $\scF_{g}^\circ$ be the moduli space of primitively polarized complex K3 surfaces of genus $g>2$ with only trivial automorphism groups, which carries a universal family 
$$\pi : \scS_{g}^\circ\rightarrow \scF_g^\circ.$$  
O'Grady \cite{O'Grady} has conjectured that the $0$-th Chow group with rational coefficients  of the generic fiber of $\pi$ is  spanned by the Beauville-Voisin class. This is referred to as the generalized Franchetta conjecture which can be viewed as a higher dimensional analogue to the Franchetta conjecture on $\cM_g$. 
\begin{conjecture}[Generalized Franchetta Conjecture for K3 surfaces] \label{conj1}
Let $\cT_\pi$ be the relative tangent bundle of $\pi:\scS_g^\circ\to \scF_g^\circ$. For any $\alpha\in \CH^2(\scS^\circ_{g})$,  there exists $m\in \QQ$ such that $\alpha-m c_2(\cT_\pi)$ is supported on a proper subset of $\scF_g^\circ$. 
\end{conjecture}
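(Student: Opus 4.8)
The plan is to prove Conjecture~\ref{conj1} by first reducing it to a vanishing statement for the transcendental part of the zero-cycle induced by $\alpha$ on the generic fibre, and then controlling that transcendental part through the Deligne--Beilinson cycle class of $\alpha$ on the whole total space. Write $\scS_{g,\eta}^\circ$ for the generic fibre of $\pi$, a K3 surface over the function field $k=\CC(\scF_g^\circ)$, and let $c_{S_\eta}$ be its Beauville--Voisin class. Since $c_2(\cT_\pi)$ restricts to $c_2(T_{S_\eta})=24\,c_{S_\eta}$ by property~(2) of \cite{BV04}, the assertion that $\alpha-m\,c_2(\cT_\pi)$ is supported on a proper subset of $\scF_g^\circ$ is equivalent to $\alpha|_{\scS_{g,\eta}^\circ}\in\QQ\cdot c_{S_\eta}$ in $\CH_0(\scS_{g,\eta}^\circ)_\QQ$. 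Because a K3 surface has no Albanese, after subtracting $\deg(\alpha|_{\scS_{g,\eta}^\circ})\,c_{S_\eta}$ the problem becomes to show that the residual class $\beta\in F^2\CH_0(\scS_{g,\eta}^\circ)_\QQ$ --- the purely transcendental part --- is zero.

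First I would detect $\beta$ globally, via the refined class of $\alpha$ in $H^4_{\DB}(\scS_g^\circ,\QQ(2))$. The key observation is that the fibrewise Abel--Jacobi invariant cannot see $\beta$: for a K3 surface $S$ one has $H^3(S)=0$, so the intermediate Jacobian $J^2(S)$ vanishes and $\AJ$ is identically zero on $\CH_0(S)^{\deg 0}$. The transcendental information therefore surfaces only at the level of the family: in the Leray filtration of $H^4_{\DB}(\scS_g^\circ,\QQ(2))$ the pieces involving $R^1\pi_*$ and $R^3\pi_*$ vanish, leaving the graded terms $H^4_{\DB}(\scF_g^\circ)$, $H^2_{\DB}(\scF_g^\circ,\cH^2)$ and $H^0_{\DB}(\scF_g^\circ,R^4\pi_*\QQ(2))$, where $\cH^2=R^2\pi_*\QQ(2)$. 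The $(0,4)$ term accounts for the multiple of $c_2(\cT_\pi)$, and the normal-function invariant of $\beta$ lives in the transcendental part of the $(2,2)$ term $H^2_{\DB}(\scF_g^\circ,\cH^2)$. I would then compute this group using the automorphic description of the cohomology of the relevant period domain quotient together with Noether--Lefschetz theory, and show that the image of $\CH^2(\scS_g^\circ)_\QQ$ in it is spanned by classes supported over proper Noether--Lefschetz loci; equivalently, the global normal function attached to $\beta$ vanishes up to contributions from proper subsets of $\scF_g^\circ$.

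The crux, and the step I expect to be the main obstacle, is to upgrade the vanishing of this cohomological normal function to the vanishing of $\beta$ in $\CH_0(\scS_{g,\eta}^\circ)_\QQ$ itself. This is precisely where cohomology underdetermines Chow groups: by Mumford's theorem the transcendental group $F^2\CH_0$ of a single K3 is infinite-dimensional and invisible to any fixed cohomological invariant, so the passage cannot be purely formal and amounts to a Bloch--Beilinson-type injectivity for the generic fibre. Rather than assume it as a black box, I would attack it through the fact that $\beta$ is defined over the whole $19$-dimensional family: spreading $\beta$ out and differentiating produces an infinitesimal invariant in a cohomology group of $\scF_g^\circ$ assembled from the Kodaira--Spencer map, and since the period map for polarized K3 surfaces is an immersion (infinitesimal Torelli), this invariant is governed by the same automorphic and Noether--Lefschetz input computed above. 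The plan is to combine Nori's connectivity theorem for the universal family with the surjectivity of the infinitesimal period map to force $\beta$ to be a constant multiple of $c_{S_\eta}$, hence zero in $F^2$. The genuinely hard point will be establishing the sharp connectivity/vanishing bound for this non-compact, Deligne--Mumford-stacky family with $A_1$-boundary degenerations --- that is, ruling out a nonzero constant-cycle contribution to $F^2\CH_0(\scS_{g,\eta}^\circ)$ that remains invisible to the normal function --- and it is this Chow-theoretic lifting, not the DB computation, that carries the full strength of Conjecture~\ref{conj1}.
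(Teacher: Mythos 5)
The statement you set out to prove is stated in the paper as a \emph{conjecture}, and the paper proves it nowhere: what the paper actually establishes is the Deligne--Beilinson analogue (Theorem \ref{mainthm}), namely that the \emph{class} $\cl_{\DB}(\alpha-mc_2(\cT_\pi))\in \rH^4_{\DB}(\scS_g^\circ,\QQ(2))$ dies on $\pi^{-1}(V)$ for some open $V$, and it explicitly records that Conjecture \ref{conj1} itself is known only for $g\leq 12$ and $g=15$ via explicit projective models and would follow over $\overline{\QQ}$ from Beilinson's injectivity conjecture \cite[Conj.~2.4.2.1]{Bei84}. Your opening reduction (restriction to the generic fibre, $c_2(\cT_\pi)\mapsto 24\,c_{S_\eta}$ by \cite{BV04}, triviality of the fibrewise Abel--Jacobi map since $\rH^3$ of a K3 vanishes) and your middle step (computing the relevant piece of $\rH^4_{\DB}$ by automorphic cohomology plus Noether--Lefschetz input) are, in substance, the paper's actual route to Theorem \ref{mainthm} --- though note the paper cannot work on $\scF_g^\circ$ directly because of the complexity of its boundary in $\scF_g$, and instead passes to the separated locus $\widetilde{\scF}_g^s$ of the oriented double cover, proves $\rH^3(\widetilde{\scS}_g^s,\CC)=0$ via the testing-family NL-number computations, deduces injectivity of $\rH^4_{\DB}\to\rH^4$, and pushes forward; your plan to compute $\rH^2_{\DB}(\scF_g^\circ,\cH^2)$ on $\scF_g^\circ$ itself would run into exactly the difficulty the paper flags.

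The genuine gap is your final ``Chow-theoretic lifting,'' and it is not a sharpenable technical bound: deducing $\beta=0$ in $F^2\CH_0(\scS^\circ_{g,\eta})_\QQ$ from the vanishing of all available Deligne-cohomological and normal-function invariants is precisely an instance of the Bloch--Beilinson conjecture. By Mumford's theorem $F^2\CH_0$ of a complex K3 is infinite dimensional and invisible to any such invariant; moreover $\alpha\in\CH^2(\scS_g^\circ)$ need not be defined over $\overline{\QQ}$, so no spread reduces you to the number-field setting where Beilinson's injectivity is even conjectured. Nori's connectivity theorem also points the wrong way for your purpose: it applies to sufficiently ample complete-intersection families (the universal family over the $19$-dimensional $\scF_g^\circ$ is not in its range), and its standard application is to \emph{produce} cycles with trivial cohomological invariants that remain nonzero in Chow groups --- evidence against, not for, the injectivity you need. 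Your own closing sentence concedes that this step ``carries the full strength of Conjecture \ref{conj1}''; as written, your proposal is therefore a correct blueprint for the paper's DB-cohomological theorem, but it does not prove, and with these tools cannot prove, the conjecture itself.
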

This conjecture has been confirmed when $g\leq 12$ and $g=15$ ({\it cf}.~ \cite{PSY, FL2021}) by using the explicit geometric models. It remains open in general.  Recently, Beauville has  shown in \cite{Beauville2021} that there exists for every $g$ a hypersurface in $\scF_g^\circ$ such that GFC holds for the corresponding family. We also refer the readers to \cite{FU2019, FU2021,La2019} for the generalization to hyper-K\"ahler varieties. In \cite{BL17}, the  authors have verified GFC  in cohomology groups.   In this paper, we give more evidences to support this conjecture by considering the cycle classes of the universal K3 surface in the Deligne-Beilinson cohomology group.  Recall that the Deligne-Beilinson cohomology $\rH^{\bullet}_{\DB}(X_\CC,\ZZ(p))$ of a quasi-projective complex algebraic variety $X_\CC$ is defined as the hypercohomology of the Deligne-Beilinson complex (cf.~\cite{EV88,Voi07}). Denote by 
\begin{equation}\label{Decycle}
\cl_{\DB}: \CH^k(X_{\CC})\rightarrow \rH^{2k}_{\DB}(X_\CC,\QQ(k)). 
\end{equation}
the DB-cycle class map. Compared with the ordinary cohomology group, the DB cohomology group is  closer to the Chow groups because the cycle class map $\cl:\CH^k(X_\CC)\to \rH^{2k}(X_\CC, \QQ(k))$ factors through $\cl_{\DB}$.  The main result of this article is 

\begin{theorem}\label{mainthm} 
For any $\alpha\in \CH^2(\scS_{g}^\circ)$, there exists a rational number $m\in \QQ$ such that the class 
\begin{equation}\label{prop-supp}
    \cl_{\DB}(\alpha-m c_2(\cT_\pi)) \in \rH^4_{\DB}(\scS_{g}^\circ, \QQ(2))
\end{equation}
is supported on a proper closed subset of $\scF_{g}^\circ$, i.e.  the restriction of $ \cl_{\DB}(\alpha-m c_2(\cT_\pi))$ is zero on $\pi^{-1}(V)$ for some open subset $V\subseteq \scF_g^\circ$. 
\end{theorem}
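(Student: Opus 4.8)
The plan is to reduce the theorem to a computation of the fourth Deligne--Beilinson cohomology of the universal family and to show that, modulo classes supported on proper closed subsets of the base, this group is generated by $c_2(\cT_\pi)$. Since the DB formalism requires a smooth separated Deligne--Mumford stack, I would first replace $\scS_g^\circ$ by the universal oriented polarized K3 surface with at worst an $A_1$-singularity constructed earlier, for which the DB theory on stacks applies; over a suitable open $V\subseteq\scF_g^\circ$ the two families agree, and the property of being supported on a proper closed subset transfers back to $\scS_g^\circ$. Throughout write $X=\pi^{-1}(V)$.

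The starting point is the fundamental exact sequence for DB cohomology of the smooth stack $X$,
\begin{equation*}
0\longrightarrow J^2(X)_\QQ\longrightarrow \rH^4_{\DB}(X,\QQ(2))\longrightarrow \mathrm{Hg}^2(X)_\QQ\longrightarrow 0,
\end{equation*}
where $\mathrm{Hg}^2(X)_\QQ=\rH^4(X,\QQ(2))\cap F^2$ is the group of rational Hodge classes and $J^2(X)_\QQ$ is the associated generalized intermediate Jacobian built from $\rH^3(X,\QQ)$. The image of $\cl_{\DB}(\alpha-m c_2(\cT_\pi))$ in $\mathrm{Hg}^2(X)_\QQ$ is the ordinary Betti cycle class, and once this image vanishes the class descends into $J^2(X)_\QQ$ and is computed by the Abel--Jacobi map. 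The proof therefore splits into controlling these two pieces. For the Hodge part I would invoke the cohomological GFC of \cite{BL17}: there is an $m\in\QQ$ for which the Betti class $\cl(\alpha-m c_2(\cT_\pi))$ restricts to zero on $\pi^{-1}(V')$ for some open $V'\subseteq\scF_g^\circ$. The mechanism is the Leray spectral sequence for $\pi$: as the fibres are K3 surfaces one has $R^1\pi_*\QQ=R^3\pi_*\QQ=0$ and $R^2\pi_*\QQ=\QQ(-1)\oplus\VV$ with $\VV$ the primitive weight-two variation, so $\rH^4(X,\QQ)$ is assembled from $\rH^0(V,R^4\pi_*\QQ)=\QQ(-2)$ (the fibre class, i.e. $c_2(\cT_\pi)$ up to scale), from $\rH^2(V,R^2\pi_*\QQ)$, and from $\rH^4(V,\QQ)$. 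Noether--Lefschetz theory identifies the extra Hodge contributions in $\rH^2(V,\VV)$ with classes of special cycles supported on NL loci, hence on proper closed subsets, and the vanishing of the relevant automorphic cohomology groups $\rH^i(\scF_g,\VV)$ shows there are no further Hodge classes; this pins down $m$ and settles the Hodge part.

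After shrinking $V$ so that the Betti class vanishes, the restricted DB class lies in $J^2(X)_\QQ$, and showing that this Abel--Jacobi image is zero is the main obstacle. The new input is the mixed Hodge structure on $\rH^3(X,\QQ)$. Since $R^3\pi_*\QQ=0$, the Leray decomposition gives
\begin{equation*}
\rH^3(X,\QQ)=\rH^1(V,\VV)\oplus\rH^1(V,\QQ(-1))\oplus\rH^3(V,\QQ),
\end{equation*}
and because every fibre is a K3 surface with $J^2=0$, the normal function attached to $(\alpha-m c_2(\cT_\pi))|_X$ has vanishing fibrewise Abel--Jacobi invariant, so its class is carried entirely by these base-level summands. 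For a weight-three Hodge structure the intermediate Jacobian is nonzero as soon as the structure itself is, so the statement I need is an honest vanishing rather than a triviality of Jacobians: the automorphic-cohomology computation must give $\rH^1(\scF_g,\VV)=0$, together with $\rH^1(\scF_g,\QQ)=0$ and the vanishing of the pure weight-three part of $\rH^3(\scF_g,\QQ)$ for the orthogonal Shimura variety. Granting these, the weight-three graded piece of $\rH^3(X,\QQ)$ vanishes, the summands above carry no Abel--Jacobi image, and $J^2(X)_\QQ$ receives nothing from $(\alpha-m c_2(\cT_\pi))|_X$; any residual contribution localizes on an NL divisor and is removed by shrinking $V$ once more.

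Combining the two vanishings shows that $\cl_{\DB}(\alpha-m c_2(\cT_\pi))$ restricts to zero on $\pi^{-1}(V)$, which is the assertion. The crux, and the genuinely new difficulty beyond the Betti-cohomological result of \cite{BL17}, is precisely this control of the intermediate Jacobian: it requires the full mixed Hodge structure on $\rH^3$ of the open universal family together with the automorphic vanishing theorems, whereas the Hodge-class part is already governed by Noether--Lefschetz theory and \cite{BL17}.
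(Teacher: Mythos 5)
Your overall skeleton matches the paper's: pass to the oriented universal family where the stacky DB theory applies, use the exact sequence relating $\rH^4_{\DB}$ to Betti cohomology, kill the kernel (which lives in $\rH^3/F^2$) by a vanishing theorem, and then quote the cohomological GFC of \cite{BL17}. But there is a genuine gap at exactly the step the paper identifies as the main difficulty. The automorphic vanishing results (Theorem \ref{BL-vanishing}) hold for the \emph{full} arithmetic quotient $\widetilde{\cF}_g$, not for open substacks such as $V\subseteq\scF_g^\circ$, and you silently transfer them to the open family. This transfer is precisely what fails in general: with $\rH^3(\widetilde{\cF}_g)=0$, the Gysin/excision sequence identifies $\rH^3$ of the complement of a removed locus $Y$ with $\ker\bigl(\rH^4_Y(\widetilde{\cF}_g)\to\rH^4(\widetilde{\cF}_g)\bigr)$, and your fallback move --- ``any residual contribution localizes on an NL divisor and is removed by shrinking $V$ once more'' --- does not work, because removing further loci only enlarges this kernel; a Gysin class in $\rH^3$ of the complement does not die under further restriction. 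The paper closes this gap by a specific choice and a computation you omit entirely: it works on the separated oriented locus $\widetilde{\scF}_g^s$, whose complement $\cH^g_{A_{1,1}}\cup\cH^g_{A_2}$ has \emph{codimension two} (removing the divisor $\cH^g_{0,-2}$, as your route through $\scF_g^\circ$ implicitly requires, would create uncontrolled $\rH^3$ classes), so that $\rH^3(\widetilde{\cF}_g^s,\CC)=0$ and $\rH^1(\widetilde{\cF}_g^s,\underline{\Lambda_g})=0$ reduce to the linear independence of the fundamental classes of the binodal and cuspidal components in $\rH^4(\widetilde{\cF}_g,\QQ)$ (Lemma \ref{vanishing}, Proposition \ref{Linearindep}). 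That independence is then proved by enumerative NL-number computations on hyperelliptic and unigonal test families (intersection numbers $864$, $7656$ and $816$, $33480$, with extra care for the reducibility cases $g\equiv 2,3 \bmod 4$ of Proposition \ref{irred-comp}), alternatively via Kudla modularity. This is the mathematical core of the proof and is absent from your proposal.

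A second, smaller but real, error: your reduction step asserts that ``over a suitable open $V\subseteq\scF_g^\circ$ the two families agree.'' They do not --- $\widetilde{\scF}_g^\circ\to\scF_g^\circ$ is a nontrivial \'etale double cover, branched (on coarse spaces) only over $\cH^g_{0,-2}$, which is disjoint from the polarized locus --- so no open subset trivializes it. The correct transfer, as in the paper, pulls $\alpha$ back along $v:\widetilde{\scS}_g^\circ\to\scS_g^\circ$, applies the oriented statement (Theorem \ref{GFC_for_oriented}), and then pushes forward with $v_!$ in DB cohomology; the existence of this proper pushforward is itself a point needing justification, via Deligne--Beilinson homology and the twisted Poincar\'e duality formalism of \cite{Jannsen1988}. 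Your Hodge-part analysis and the recognition that an ``honest vanishing'' of the weight-three piece is needed (rather than a triviality of intermediate Jacobians) are correct and consonant with the paper, but without the codimension-two device and the NL-number/linear-independence computation the central vanishing is unproven.
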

We shall mention that the generalized Franchetta conjecture over $\overline{\QQ} $ can be deduced from  the Bloch-Beilinson conjecture.   When $X$ is defined over a number field, Beilinson  conjectured that  the rational Chow ring $\CH^\ast(X_{\bar{\QQ}})$  injects into the DB cohomology \cite[Conj. 2.4.2.1]{Bei84} via the cycle class map $\cl_{\DB}$.
Note that the universal family $\scS^\circ_{g}$ can be defined over a number field, so whenever Beilinson's conjecture  holds  either for $\scS^\circ_g$ or for K3 surfaces, one can  immediately obtain the generalized Franchetta conjecture over $\overline{\QQ}$.  

Let us conclude this introduction by explaining  the ideas of the proof. As cohomological generalized Franchetta conjecture is known, it suffices to calculate the kernel of the map $$\rH^4_{\DB}(\scS_g^\circ, \QQ(2))\to \rH^4(\scS_g^\circ,\QQ(2)),$$ which lies in $\rH^3(\scS_g^\circ,\CC)/F^2\rH^3(\scS_g^\circ,\CC)$. 
In \cite{BL17}, the low-degree cohomology groups (with coefficients) of the moduli stack $\scF_g$ of quasi-polarized K3 surfaces and its special moduli substacks have been completely computed. But
the cohomology groups of $\scF_g^\circ$ seem rather difficult to compute due to the complexity of the boundary of $\scF_g^\circ$ in $\scF_g$.   To overcome this difficulty,  we have to work with the separated substack of some covering of $\scS$.   In our situation, we consider the separated locus of the moduli stack of quasi-polarized oriented K3 surfaces (which is a double covering of $\scF_g^\circ$) and our proof of Theorem \ref{mainthm} relies on a dedicated calculation on the cohomology groups of this open substack.

\subsection*{Organization of the paper}
In Section \ref{Sec2}  and Section \ref{Sec3} , we review the basic theory of Shimura varieties of orthogonal type and moduli stacks of quasi-polarized K3 surfaces.   In Section \ref{Sec4}, we introduce the DB cohomology on separated Deligne-Mumford stacks and construct the DB-cycle class map for smooth ones. As expected,   we show that it is parallel to the traditional DB cohomology theory for varieties.  In Section \ref{Sec5}, we consider the moduli stack of quasi-polarized oriented K3 surfaces and study the separated locus of this stack. 
Section \ref{Sec6} is devoted to proving Theorem \ref{mainthm}. We perform the NL-number computation on families of  unigonal and hyperelliptic K3 surfaces. Combined with the results in \cite{BL17}, this allows us to show that the  fourth DB cohomology group of universal family injects into the Betti cohomology. The assertion of Theorem \ref{mainthm} follows immediately. 

\subsection*{Acknowledgement}
The authors are grateful to Claire Voisin for drawing our attention to this problem and Nicolas Bergeron for lots of helpful discussions and useful comments. We would like to thank Xiping Zhang for the discussion on NL-number computations. The authors were supported by NSFC grants (No.~12121001 and No.~12171090).

\section{Preliminary: Shimura variety and special cycle}  \label{Sec2}
In this section, we recall some basic results on Shimura varieties of orthogonal type which will be used in the proof. 

\subsection{Shimura variety of orthogonal type}Let $V$ be a non-degenerate  quadratic even lattice  over $\ZZ$ of signature $(2,n)$ and let $G=\SO(V)$ be the corresponding special orthogonal group. The group $G(\RR)$ of real points of $G$ is isomorphic to $\SO (2,n)$. Fix a maximal compact subgroup of $K_\RR \subset G(\RR)$ and let $\rD_V$ be the connected component of $G(\RR)/K_\RR$.  It is a type IV Hermitian symmetric domain of complex dimension $n$. 
 
Let $\hat{\ZZ}$ be the profinite completion of $\ZZ$ and $\AA_f=\hat{\ZZ}\otimes \QQ$ be the ring of finite adeles.   Let $\KK\subseteq G(\AA_f)$  be the discriminant kernel, i.e. the largest subgroup of $G(\hat{\ZZ})$ that acts trivially on the discriminant group $V^\vee/V$. To any compact open subgroup $K\subseteq \KK$ of $G(\AA_f)$ it corresponds a quotient space
$$\Sh_K(V) = G(\QQ)\backslash \widehat{\rD}_V \times G (\AA_f) /K.$$
We shall simply write $\Sh(V)$ when $K = \KK$. 

If $K$ is neat, e.g. when $K$ is sufficiently small,  then $\Sh_K(V)$ is a smooth quasi-projective variety.  In general, $\Sh_K(V)$ is not connected. The connected component corresponding to the coset of $\rD_V \times \{\mathrm{id} \}$  is an arithmetic quotient $$X_{\Gamma}:=\Gamma \backslash \rD_V$$ where $\Gamma=G(\QQ)\cap K$.   
 If $V$ contains a hyperbolic plane, then one has $$\Sh(V)=\Gamma_V\backslash \rD_V,$$where $\Gamma_V=\KK\cap G(\ZZ)$ is the stable special orthogonal group. 

\subsection{Special cycles on $X_\Gamma$}  
Given a vector $\bv\in (V^\vee)^r\subseteq V^r(\QQ)$, we let $U=U(\bv)$ be the $\QQ$-subspace of $V(\QQ)$ spanned by the components of $\bv$. Let $\rD_{\bv}\subset \rD_V$ be the subset consisting of $2$-planes which lie in $U^\perp$. 
The codimension $r$ natural cycle $Z(\bv)$ on $X_\Gamma$ is defined to be the image of 
\begin{equation}\label{special-manifold}
\Gamma_{\bv}\backslash \rD_\bv\rightarrow \Gamma\backslash \rD_V.
\end{equation}
where $\Gamma_{\bv}$ is the stabilizer of $U$ in $\Gamma$. For any $\beta\in \mathrm{Sym}_{r\times r} (\QQ)$  of rank $t(\beta)$, we set 
$$\Omega_\beta=\{\bv\in V^r \; | \;  \frac{1}{2} (\bv,\bv)=\beta,\dim U(\bv)=t(\beta)\}.$$
which comes with a natural $\Gamma$-action. 
For each function $\varphi\in \Hom((V^\vee/V)^r, \CC)$,  Kudla associates a {\it special cycle} $$Z(\beta,\varphi;\Gamma)\in \CH^r(X_\Gamma)$$ 
as  some linear combinations of $\lambda^{r-t(\beta)} \cdot Z(\bv)$ with $\bv\in \Omega_\beta$ (cf. \cite{Ku97}). Here $\lambda$ is the Hodge line bundle on $X_\Gamma$. They are compatible with natural pullback maps $\pi:X_{\Gamma'}\to X_\Gamma$ for $\Gamma'\subseteq \Gamma$, i.e. $$\pi^\ast(Z(\beta,\varphi;\Gamma))=Z(\beta,\varphi;\Gamma').$$  Hence we may write  $Z(\beta,\varphi)=Z(\beta,\varphi;\Gamma)$ for simplicity.   Let us  denote by $Z(\beta)$ the map  $$\varphi \mapsto  Z(\beta, \varphi ) \in  \Hom((V^\vee/V)^r, \CC)^\vee \otimes  \CH^r(X_\Gamma)_\CC .$$ 

Let $\Mp_{2r}(\ZZ)$ be the metaplectic double covering of $\Sp_{2r}(\ZZ)$ and $\rho_r$ be the Weil representation of $\Mp_{2r}(\ZZ)$ acting on $\Hom((V^\vee/V)^r, \CC)$. A celebrated theorem for special cycles on $X_\Gamma$ is 
\begin{theorem}[Kudla's modularity conjecture]\label{Modularity}
The generating series 
 \begin{equation}
   \Theta(\tau)= \sum\limits_{\beta\geq0} Z(\beta)  e^{2\pi i \mathrm{tr}(\beta\tau)},~ \tau\in \HH_r
 \end{equation}
 with coefficients in $\Hom((V^\vee/V)^r, \CC)^\vee \otimes_\CC \CH^r(X_\Gamma)_\CC$ is a  Siegel modular form of type $\rho_r$, weight $\frac{n+2}{2}$  and genus $r$ in with values in $\CH^r(X_\Gamma)_\CC$, i.e. 
$$\Theta(A\tau)=\det(c\tau +d)^{\frac{n+2}{2}}\rho_r (A, \pm \sqrt{\det(c\tau+d)})\Theta(\tau),$$
for all $A=\left(\begin{array}{ccc} a & b \\ c & d \end{array}\right)\in \Sp_{2r}(\ZZ)$. 
\end{theorem}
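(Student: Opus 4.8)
The plan is to establish the automorphy first for the cohomology classes of the cycles, where the theta machinery applies directly, and only afterwards to upgrade the statement to the Chow group $\CH^r(X_\Gamma)_\CC$. The natural starting point is the Kudla--Millson theory: one constructs a Schwartz-form-valued theta kernel $\theta_{\mathrm{KM}}(\tau)$ on $X_\Gamma$, a closed differential $2r$-form depending on $\tau\in\HH_r$, whose cohomology class has a Fourier expansion
\begin{equation}
[\theta_{\mathrm{KM}}(\tau)] = \sum_{\beta\geq 0} [Z(\beta)]\, e^{2\pi i \mathrm{tr}(\beta\tau)},
\end{equation}
with coefficients the images in $\rH^{2r}(X_\Gamma,\CC)$ of the special cycles $Z(\beta)$. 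Because $\theta_{\mathrm{KM}}$ is a theta function, it transforms automatically under $\Mp_{2r}(\ZZ)$ with the asserted factor $\det(c\tau+d)^{(n+2)/2}\rho_r(A,\pm\sqrt{\det(c\tau+d)})$, so the cohomology-valued series is a Siegel modular form of type $\rho_r$, weight $(n+2)/2$ and genus $r$. This proves the theorem with $\CH^r(X_\Gamma)_\CC$ replaced by $\rH^{2r}(X_\Gamma,\CC)$.

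Next I would treat the divisor case $r=1$ in the Chow group itself, since it serves as the base of an induction on the genus. Here $\sum_{\beta\geq 0}Z(\beta)\,e^{2\pi i \mathrm{tr}(\beta\tau)}$ takes values in $\CH^1(X_\Gamma)_\CC$, and its modularity is Borcherds' theorem: the obstruction to modularity of a $\CH^1$-valued $q$-series pairs, via Serre duality on the relevant modular curve, against vector-valued cusp forms of complementary weight, and this pairing is shown to vanish using the linear relations among Heegner divisors produced by the regularized theta lift. This yields genuine, not merely cohomological, modularity in genus one.

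For general $r$ the idea is to pass to the Fourier--Jacobi expansion. Writing $\tau=\left(\begin{smallmatrix}\tau' & z\\ z^t & \tau_{rr}\end{smallmatrix}\right)$ and expanding $\Theta(\tau)$ in powers of $e^{2\pi i\tau_{rr}}$ produces Fourier--Jacobi coefficients valued in $\CH^r(X_\Gamma)_\CC$. Each such coefficient should be identified with a Jacobi form of genus $r-1$ by restricting the special cycles to the sub-Shimura varieties attached to rank-one sublattices and invoking the inductive hypothesis together with the pullback compatibility $\pi^\ast Z(\beta,\varphi;\Gamma)=Z(\beta,\varphi;\Gamma')$, following Zhang's thesis. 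In addition, the series visibly inherits the symmetry under $\GL_r(\ZZ)$ from the equivariance $\beta\mapsto u^t\beta u$ of the cycles $Z(\beta)$. Thus $\Theta(\tau)$ is a \emph{formal} Fourier--Jacobi series of the expected type.

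The decisive and hardest step is to show that this formal series is the expansion of an honest Siegel modular form. This is the main obstacle, because the coefficients live in the abstract $\CC$-vector space $\Hom((V^\vee/V)^r,\CC)^\vee\otimes\CH^r(X_\Gamma)_\CC$, carrying no analytic topology, so the convergence arguments that make the cohomological theta kernel modular are simply unavailable. To overcome it I would invoke the Bruinier--Raum structure theorem: any formal Fourier--Jacobi series whose coefficients are Jacobi forms of the appropriate weight and index and which satisfies the $\GL_r(\ZZ)$-symmetry is automatically the Fourier--Jacobi expansion of a classical vector-valued Siegel modular form. Applying this coefficientwise to a basis of the finite-dimensional target space promotes the formal modularity established in the previous steps to genuine modularity in $\CH^r(X_\Gamma)_\CC$, which completes the proof.
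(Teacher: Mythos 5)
The paper does not actually prove this statement: Theorem~\ref{Modularity} is quoted as a known ``celebrated theorem'' (Kudla's conjecture, proved by Bruinier and Westerholt-Raum building on Borcherds and W.~Zhang), so there is no internal proof to compare against. Your proposal is, in outline, precisely the proof from the literature: the Kudla--Millson theta form gives modularity of the generating series in $\rH^{2r}(X_\Gamma,\CC)$ with the stated weight $\frac{n+2}{2}$ and type $\rho_r$; Borcherds' higher-dimensional Gross--Kohnen--Zagier theorem, via his Serre-duality obstruction criterion together with the relations among Heegner divisors produced by the regularized theta lift, settles the case $r=1$ in $\CH^1(X_\Gamma)_\CC$; Zhang's thesis shows by induction on the genus, using the pullback compatibility of special cycles and embeddings of smaller orthogonal Shimura varieties, that $\Theta$ is a symmetric formal Fourier--Jacobi series whose coefficients are Jacobi forms; and the Bruinier--Raum convergence theorem promotes formal to genuine modularity. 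So your reconstruction is correct and is essentially the only known route.

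One slip deserves correction: your final step, ``applying this coefficientwise to a basis of the finite-dimensional target space,'' is not available, because $\Hom((V^\vee/V)^r,\CC)^\vee\otimes_\CC\CH^r(X_\Gamma)_\CC$ is not known to be finite-dimensional a priori (finite-dimensionality of the span of the classes $Z(\beta,\varphi)$ is a \emph{consequence} of modularity, not an input). The standard remedy is to compose $\Theta$ with an arbitrary linear functional $\ell$ on the target: each $\ell\circ\Theta$ is a scalar-valued symmetric formal Fourier--Jacobi series whose coefficients are honest Jacobi forms of the relevant weight and index (these spaces are finite-dimensional), the Bruinier--Raum theorem applies to each $\ell\circ\Theta$, and modularity for every $\ell$ yields modularity of $\Theta$ itself. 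With that adjustment your proposal is a faithful account of the accepted proof of the theorem the paper cites.
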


\begin{example}\label{Modularity-unimodular}
If $V$ is unimodular, then $V^\vee/V$ is trivial and the special cycle $Z(\beta)$ can be written as the quotient by $\Gamma$ of the union of hyperplanes
\begin{equation} \label{specialcycle}
Z(\beta)= \Gamma\backslash \sum\limits_{\bv\in \Omega_\beta}  U(\bv)^\perp.
\end{equation}
which is a linear combination of $Z(\bv)$ for $\bv\in \Gamma\backslash \Omega_\beta$.  
In this case, Theorem \ref{Modularity} reads as the generating series $\Theta(\tau)$ is a classical Siegel modular form of weight $\frac{n+2}{2}$ and genus $r$.
\end{example}

Another important result concerning the special cycles is the following
\begin{theorem}[\cite{BL17}] \label{BL-vanishing}
 Let $E$ be a  $\CC$-representation of $\rO(V_\RR)$ and  let $\underline{E}$ be the associated  local system on $X_\Gamma$.  
 Suppose $n\geq 8$, then we have  
\begin{enumerate}
		\item  $\rH^1(X_\Gamma,\uE)=\rH^3(X_\Gamma,\uE)=0$.
		\item  $\rH^{2i}(X_\Gamma),\uE)$ is spanned by  special cycles  with coefficients in $\uE$ if $i=1$ or $2$. 
	\end{enumerate}	
\end{theorem}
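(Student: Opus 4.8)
The plan is to pass from the topology of $X_\Gamma$ to the representation theory of $G(\RR)=\SO(2,n)$ via automorphic forms, and then to invoke Kudla--Millson theory to identify the even cohomology with special cycles. First I would relate $\rH^\bullet(X_\Gamma,\uE)$ to relative Lie algebra cohomology. Since $X_\Gamma$ is non-compact, I would use Borel's stability theorem together with the comparison between ordinary, interior and $L^2$-cohomology: in the low-degree (stable) range — which is precisely why the hypothesis $n\geq 8$ is imposed, so that degrees $\leq 4$ are stable — the Eisenstein and boundary contributions vanish, interior cohomology agrees with ordinary cohomology, and the latter is computed by square-integrable automorphic forms. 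By the resulting Matsushima-type decomposition one obtains
\begin{equation*}
\rH^k(X_\Gamma,\uE)\cong \bigoplus_{\pi} m(\pi)\,\rH^k(\frg,K;\pi_\infty\otimes E)
\end{equation*}
for $k\leq 4$, the sum running over the automorphic representations $\pi$ contributing in this range.

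Second, I would classify the cohomological representations. By Vogan--Zuckerman the irreducible unitary $(\frg,K)$-modules with nonvanishing cohomology are the modules $A_\frq(\lambda)$ attached to $\theta$-stable parabolic subalgebras $\frq=\frl\oplus\fru$, and each contributes to cohomology only starting in degree $R_\frq=\dim(\fru\cap\frp)$. Enumerating the $\theta$-stable parabolics of $\SO(2,n)$ and computing the $R_\frq$, one finds that for $n\geq 8$ no such module contributes in degrees $1$ or $3$; the only contributions in degrees $2$ and $4$ come from the trivial representation (the powers $\lambda^i$ of the Hodge class) and from the modules whose cohomology sits in Hodge bidegree $(i,i)$. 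This yields the vanishing $\rH^1=\rH^3=0$ of part (1) as a clean degree count, and shows that $\rH^{2i}(X_\Gamma,\uE)$ for $i=1,2$ consists entirely of Tate-type classes.

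Third, to prove part (2) I would identify these $(i,i)$-classes with special cycles through the Kudla--Millson correspondence. The Kudla--Millson Schwartz form $\varphi_{\mathrm{KM}}$ defines a theta lift from genus-$r$ Siegel modular forms of type $\rho_r$ into $\rH^{2r}(X_\Gamma,\uE)$, and by the Kudla--Millson theorem the cohomology class of this lift is exactly the generating series $\Theta(\tau)$ of the special cycles $Z(\beta,\varphi)$ of Theorem~\ref{Modularity}; thus every class in the image of the lift is a combination of special cycles. The content of surjectivity is that, in the stable range $2i\leq 4<n$, every cohomological automorphic representation contributing to $\rH^{2i}$ is a global theta lift from $\Sp_{2i}$, so that the Kudla--Millson map exhausts $\rH^{2i}(X_\Gamma,\uE)$; this step rests on the classification of the cohomological cuspidal and residual spectrum of $\SO(2,n)$ in low degree together with the nonvanishing of the global theta lift supplied by the Rallis inner product formula and Siegel--Weil.

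The hardest part is the surjectivity in (2). The vanishing (1) is essentially bookkeeping once the Vogan--Zuckerman classification and the stable range are in hand. But proving that special cycles \emph{span} $\rH^{2i}$ requires controlling the \emph{entire} cohomological spectrum in these degrees: one must rule out any exotic cohomological representation that is not of theta-lift type, and then know that the global theta correspondence from $\Sp_{2i}$ surjects onto the relevant representations. This is precisely the genuinely arithmetic input, and it is where I expect the real work to lie; the non-compactness of $X_\Gamma$ is a secondary technical obstacle, handled by restricting to the stable range where ordinary and $L^2$-cohomology agree.
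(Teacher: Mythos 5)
The paper does not prove this theorem at all --- it is imported verbatim from \cite{BL17} --- so the only meaningful comparison is with the cited proof, whose architecture your sketch reconstructs accurately: reduction of $\rH^k(X_\Gamma,\uE)$ for small $k$ to $L^2$/automorphic cohomology on the non-compact quotient, the Vogan--Zuckerman classification showing that for $n\geq 8$ no cohomological module $A_{\frq}(\lambda)$ occurs in degrees $1$ or $3$ and only the $(i,i)$-type modules occur in degrees $2$ and $4$, and then Kudla--Millson theory with surjectivity of the theta lift from $\Sp_{2i}$ supplied by Arthur's classification together with the Rallis inner product formula and Siegel--Weil. You also correctly locate the genuinely hard arithmetic input in the surjectivity statement of part (2), so your proposal matches the approach behind the quoted result in all essentials.
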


\section{Moduli spaces of quasi-polarized K3 surfaces}\label{Sec3} 

In this section, we give an overview of quasi-polarized K3 surfaces and discuss some Noether-Lefschetz cycles.
\subsection{Moduli space of (quasi-)polarized K3 surfaces} A {\it (quasi-)polarized K3 surface of genus $g\geq 2$}  is a pair $(S,L)$,  where $S$ is a K3 surface and  $L\in \Pic(S)$ is an ample (resp.~nef) line bundle with $L^2=2g-2$.  The middle cohomology $\rH^2(S,\ZZ)$ equipped with the bilinear intersection form $q(-,-)$ is an even unimodular lattice which is isomorphic to the {\it K3 lattice}
$$\Lambda=U^{\oplus 3}\oplus E_8(-1)^{\oplus 2}.$$
where $U$ is the hyperbolic lattice of rank two and $E_8$ is the positive definite lattice associated to the Lie algebra of the same name. Let $\{e_1,f_1\}$ be a standard basis of the first hyperbolic lattice.  The primitive part $\rH^2_{\rm prim}(S,\ZZ):=c_1(L)^\perp$ has signature $(2,19)$ and it is isomorphic to the lattice
$$\Lambda_{g}:=\left<e_1-(g-1)f\right>\oplus U^{\oplus 2}\oplus E_8(-1)^{\oplus 2}, $$ 
by identifying $c_1(L)$ with the vector $e_1+(g-1)f_1$. 
The Hodge structures on $\rH^2_{\rm prim}(S,\ZZ)$ are parametrized by the period domain $\rD_{\Lambda_g}$ associated to $\Lambda_g$.  
 
Let  $\scF_g$ be the moduli stack of primitively quasi-polarized complex K3 surfaces of genus $g$ and $\scF_{g}^p \subseteq \scF_g$ the open substack of polarized K3 surfaces of genus $g.$  The following result is well-known. 

\begin{theorem}(cf.~\cite{Huy12})
The moduli stack $\scF_g$ is a smooth Deligne-Mumford stack of finite type and it is coarsely represented by a quasi-projective variety $\cF_g$ and there is an isomorphism 
$$\cF_g\to \widetilde{\rO}(\Lambda_g)\backslash \rD_{\Lambda_g}.$$
Moreover, the open substack $\scF_g^p$ is separated. 
\end{theorem}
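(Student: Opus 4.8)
The plan is to assemble the four assertions---that $\scF_g$ is a finite-type Deligne-Mumford stack, that it is smooth, that it admits a quasi-projective coarse space with the stated uniformization, and that the polarized locus $\scF_g^p$ is separated---from the classical period theory of K3 surfaces together with the Baily-Borel construction of arithmetic quotients.

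First, for the Deligne-Mumford property, I would note that any automorphism of a quasi-polarized K3 surface $(S,L)$ fixing the class of $L$ acts faithfully on $\rH^2(S,\ZZ)$ by the Torelli theorem, and since it preserves both the lattice structure and the polarization it lies in a finite group; thus the automorphism groups are finite and unramified. Finite type together with an \'etale atlas follow from the standard presentation of $\scF_g$ as the quotient stack attached to a Hilbert scheme of embedded polarized K3 surfaces modulo a projective linear group acting with finite stabilizers. Smoothness is the deformation-theoretic statement that K3 deformations are unobstructed: since $K_S$ is trivial one has $T_S\cong \Omega^1_S$, whence $\rH^0(S,T_S)=\rH^2(S,T_S)=0$ by Serre duality, and imposing the polarization cuts out a smooth family of dimension $19=\dim \rD_{\Lambda_g}$.

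Second, for the coarse space and uniformization, the Keel-Mori theorem applies to the finite-type stack $\scF_g$ with finite inertia and yields a coarse moduli algebraic space $\cF_g$. The period map, sending $(S,L)$ to the Hodge structure on $\rH^2_{\rm prim}(S,\ZZ)\cong \Lambda_g$, descends to a morphism $\cF_g\to \widetilde{\rO}(\Lambda_g)\backslash \rD_{\Lambda_g}$. The global Torelli theorem supplies injectivity, surjectivity of the period map in the quasi-polarized setting supplies bijectivity on points, and the infinitesimal Torelli theorem shows the map is \'etale; hence it is an isomorphism. As the target is a quasi-projective variety by Baily-Borel, $\cF_g$ acquires this structure.

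The substantive point is the separatedness of $\scF_g^p$, which I would verify through the valuative criterion. Given a discrete valuation ring $R$ with fraction field $F$ and two families of polarized K3 surfaces over $\Spec R$ agreeing over $\Spec F$, the ampleness of the polarizations lets me embed both families into a common projective space by a fixed power of $L$; uniqueness of the limit then reduces to the separatedness of the ambient Hilbert scheme together with the uniqueness of the special fiber as a minimal model. I expect this to be the main obstacle, and it is precisely where the polarized hypothesis is indispensable: a merely nef class admits distinct contractions of $(-2)$-curves in the central fiber, producing non-isomorphic limits, so the full quasi-polarized stack $\scF_g$ is not separated whereas $\scF_g^p$ is.
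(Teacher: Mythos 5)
The paper does not actually prove this theorem: it is quoted from \cite{Huy12}, and the standard argument there is essentially the route you follow for the first three assertions (Deligne--Mumford property via finite, faithfully acting automorphism groups and a Hilbert-scheme presentation; smoothness by unobstructedness of polarized deformations; Keel--Mori plus global and local Torelli, surjectivity of the period map, and Baily--Borel for the quasi-projective coarse space). Those parts are fine as a sketch, modulo the usual caveat that injectivity of the period map in the \emph{quasi}-polarized setting requires adjusting a Hodge isometry by reflections in $(-2)$-classes so that nef chambers match --- a point your bare appeal to ``global Torelli'' silently absorbs.

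The genuine gap is in your separatedness argument. Embedding both families over $\Spec R$ by a fixed power $|L^{\otimes k}|$ produces two maps $\Spec R\to {\rm Hilb}$ which do \emph{not} agree at the generic point: the isomorphism over the fraction field $F$ only says that the two embedded generic fibers differ by some $g\in {\rm PGL}_{N+1}(F)$, and whether $g$ extends over $R$ is exactly the question being asked; separatedness of the Hilbert scheme gives nothing until that is settled, since $g$ may a priori degenerate. The actual content of this step is Matsusaka--Mumford \cite{MM64}, Theorem 1: a degenerating limit of projectivities would force the special fiber to be ruled, which a K3 surface is not. Equivalently, one takes the closure of the graph of the $F$-isomorphism in $\cX\times_R \cX'$, notes that the induced birational map of special fibers is an isomorphism because birational minimal surfaces of non-negative Kodaira dimension are isomorphic, and uses ampleness of the limiting polarizations to see that the extension is biregular and respects $L$. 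Your phrase ``uniqueness of the special fiber as a minimal model'' gestures at the right surface-theoretic input, but the reduction ``to the separatedness of the ambient Hilbert scheme'' as written is circular. Note that the paper itself, when proving the analogous separatedness of the oriented locus in Proposition \ref{seploci}, invokes \cite{MM64}, Theorem 1, at precisely this point. Your closing observation --- that a merely nef class permits elementary transformations across $(-2)$-curves in the central fiber, so the full quasi-polarized stack is non-separated while $\scF_g^p$ is --- is correct and matches the paper's discussion.
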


More generally, one can consider the moduli space of lattice polarized K3 surfaces, which behaves similarly as $\scF_g$. 
\begin{definition}[Lattice polarized K3]    
Let $S$ be a projective K3 surface and $\Sigma$ a non-degenerate lattice.  We say that $S$ is a $\Sigma$-(quasi-)polarized K3 surface if  there exists a primitive embedding $\iota: \Sigma\to \Pic(S)$ such that the image of $\Sigma$ in $\Pic(S)$ under $\iota$ contains an ample (resp. nef and big) class.
\end{definition}
Let $\cF_{\Sigma}$ be the coarse moduli space of $\Sigma$-quasi-polarized K3 surfaces. In \cite{Do96}, Dolgachev proved that
\begin{theorem}(cf.~\cite[Remark 3.4]{Do96})\label{moduli_space_of_lattice_polarized_K3}
Fix a primitive embedding of $\Sigma$ into $\Lambda$ and consider $\Sigma$ as a sublattice of $\Lambda$ via this embedding. Then there is an isomorphism 
$$\cF_\Sigma\cong \Gamma_{\Sigma}\backslash \rD_{\Sigma^\perp}$$
where $\rD_{\Sigma^\perp}:= \{[\sigma]\in \rD_{\Lambda}|~q(\sigma, \Sigma)=0\}$ is the period domain of $\Sigma$-(quasi-)polarized K3 surfaces and $\Gamma_{\Sigma}:=\{\gamma\in \rO(\Lambda)|~\gamma|_{\Sigma}=id\}.$
\end{theorem}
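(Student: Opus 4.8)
The plan is to realize the claimed isomorphism as a period map and to deduce it from the global Torelli theorem together with the surjectivity of the period map for K3 surfaces. First I would construct the period map. Given a $\Sigma$-quasi-polarized K3 surface $(S,\iota)$, I choose a marking, that is, an isometry $\phi\colon \rH^2(S,\ZZ)\xrightarrow{\sim}\Lambda$ compatible with the fixed embedding in the sense that $\phi\circ\iota$ equals the chosen inclusion $\Sigma\hookrightarrow\Lambda$. Since $\iota(\Sigma)\subseteq\Pic(S)$ consists of $(1,1)$-classes, it is orthogonal to $\rH^{2,0}(S)$, so the period point $\phi_\CC(\rH^{2,0}(S))$ lies in $\rD_{\Sigma^\perp}$. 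Two markings compatible with $\iota$ differ precisely by an element of $\Gamma_\Sigma=\{\gamma\in\rO(\Lambda)\mid \gamma|_\Sigma=\mathrm{id}\}$, so the period point is well defined modulo $\Gamma_\Sigma$ and depends only on the isomorphism class of $(S,\iota)$. This produces a morphism $\mathcal{P}\colon \cF_\Sigma\to \Gamma_\Sigma\backslash\rD_{\Sigma^\perp}$, holomorphic by the local description of the period map and the universal property of the coarse space.

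For injectivity I would invoke the global Torelli theorem. Suppose $(S,\iota)$ and $(S',\iota')$ have the same image under $\mathcal{P}$. After adjusting the markings by an element of $\Gamma_\Sigma$, their period points coincide in $\rD_{\Sigma^\perp}$, which yields a Hodge isometry $\psi\colon\rH^2(S,\ZZ)\to\rH^2(S',\ZZ)$ restricting to the identity on $\Sigma$. The global Torelli theorem asserts that $\psi$ is induced by an isomorphism of surfaces provided it is effective, i.e. carries the positive cone and the nef-and-big classes of $S$ to those of $S'$. Composing with reflections in $(-2)$-classes lying in $\Sigma^\perp$ I can arrange $\psi$ to be effective while keeping it the identity on $\Sigma$; here the quasi-polarized (nef-and-big rather than ample) hypothesis is what allows the offending roots to be taken orthogonal to $\Sigma$. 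The resulting isomorphism respects the $\Sigma$-polarization, so $(S,\iota)\cong(S',\iota')$ in $\cF_\Sigma$.

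Surjectivity follows from surjectivity of the period map: any $[\sigma]\in\rD_{\Sigma^\perp}$ is the period of some marked K3 surface $(S,\phi)$, and because $\sigma\perp\Sigma$ the sublattice $\phi^{-1}(\Sigma)$ consists of integral $(1,1)$-classes, hence lies in $\Pic(S)$ and defines an embedding $\iota$. It remains to show that $\iota(\Sigma)$ contains a nef-and-big class; I would achieve this by acting with the Weyl group generated by reflections in the roots of $\Pic(S)$, moving a positive class of $\Sigma\otimes\RR$ into the closure of the ample cone and absorbing the reflection into the marking. I expect the main obstacle to be exactly this wall-and-chamber analysis: controlling the $(-2)$-classes and checking that, for the quasi-polarized moduli problem, every period point genuinely arises from a $\Sigma$-quasi-polarized surface and that the reflections used in both directions stay compatible with the fixed copy of $\Sigma$. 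The final step, upgrading $\mathcal{P}$ from a bijection to an isomorphism of analytic spaces, is then formal: both sides are normal and $\mathcal{P}$ is a holomorphic bijection that is a local isomorphism by the local Torelli theorem.
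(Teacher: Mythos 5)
The paper offers no proof of this statement at all: it is imported directly from Dolgachev \cite{Do96} with a citation, so there is no in-paper argument to compare against. Your sketch reconstructs the standard proof underlying the citation --- period map, global Torelli for injectivity, surjectivity of the period map plus a Weyl-group wall-crossing analysis --- and this is indeed the route Dolgachev takes, with most steps sound as you outline them.

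Two places need more care, and both are points where Dolgachev's actual definitions do work that your sketch elides. First, your opening step --- choosing a marking $\phi$ with $\phi\circ\iota$ equal to the fixed inclusion $\Sigma\hookrightarrow\Lambda$ --- is possible only if the embedding $\phi_0\circ\iota$ arising from an arbitrary marking lies in the same $\rO(\Lambda)$-orbit as the fixed one. Primitive embeddings of a given lattice into $\Lambda$ need not be unique up to isometry, and with the paper's definition of $\cF_\Sigma$ (an embedding into $\Pic(S)$, with no reference to $\Lambda$) the moduli space can a priori break into several components indexed by $\rO(\Lambda)$-orbits of embeddings, only one of which maps to $\Gamma_\Sigma\backslash\rD_{\Sigma^\perp}$. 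This is harmless in the paper's application ($\Sigma=U$ splits off as a unimodular summand, so the embedding is unique by Nikulin's criterion, exactly as the paper invokes in its Proposition on $\cH_{U,A_2}$ and $\cH_{U,A_{1,1}}$), but your proof should either assume this uniqueness or build the fixed embedding class into the moduli problem. Second, in the injectivity step your claim that the offending roots can be taken orthogonal to $\Sigma$ is not automatic: the nef-and-big classes $h\in\iota(\Sigma)$ and $h'\in\iota'(\Sigma)$ may lie in different chambers of the positive cone of $\Sigma_\RR$ cut out by walls $\delta^\perp$ with $\delta\not\perp\Sigma$, and the reflections needed to compare them then fail to fix $\Sigma$ pointwise. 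Dolgachev handles this by rigidifying the definition of a lattice polarization --- fixing a chamber $C(\Sigma)^+$ and requiring compatibility with it --- after which the residual ambiguity is exactly by reflections in roots of $\Sigma^\perp\cap\Pic(S)$ and your effectivity argument goes through. So the strategy is correct, but these two normalizations are genuinely needed rather than technical conveniences.
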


\subsection{Example: Moduli of elliptic K3 surfaces}\label{Elliptic}
Let us consider the $U$-quasi-polarized K3 surfaces. If $S$ is $U$-quasi-polarized, we have $L_1,L_2\in\Pic(S)$ satisfying $L_i^2=0$ and $L_1L_2=1$. We may assume that $L_1$ is effective without fixed component. Then the map $$f:S\to |L_1|\cong \PP^1$$ is an elliptic fibration and the difference $L_1-L_2$ represents a section class of $f$.  Conversely, if $f:S\to \PP^1$ is an elliptic fibration with a section $O$, then the fiber class $F$ and $F+O$ gives a $U$-quasi-polarization.  We may also call  $S$ a {\it unigonal} K3 surface. Let $\cF_U$ be the coarse moduli space of $U$-quasi-polarized K3 surfaces. According to Theorem \ref{moduli_space_of_lattice_polarized_K3}, we have
$$\cF_U=\Gamma_{U} \backslash \rD_{U^\perp}.$$ 
Note that the orthogonal complement $U^\perp\cong U^{\oplus 2}\oplus E_8^{\oplus 2}$ is a unimodular lattice.

Let us follow \cite[III.2]{Mir} to describe the projective models of $U$-quasi-polarized K3 surfaces. Let $\pi:S\to \PP^1$ be the elliptic fibration and $C$ a section of $\pi$. Then $S$ has a unique Weierstrass model with at worst ADE singularities, i.e. we can think it as the closed subscheme of $\PP(\cO_{\PP^1}\oplus \cO_{\PP^1}(-4)\oplus \cO_{\PP^1}(-6))$  defined by the equation
$$y^2z=x^3+axz^2+bz^3$$ 
where $a\in \rH^0(\PP^1,\cO_{\PP^1}(8))$ and $b\in \rH^0(\PP^1,\cO_{\PP^1} (12))$ satisfying certain conditions (cf.~\cite[Theorem 2.1]{Mi81}). As shown in \cite{Mi81}, such equations are naturally parametrized by an open subset of the weighted projective space $\PP(2^{(9)},3^{(13)})$  equipped with a $\SL_2$-action.  Recently,  Odaka-Oshima \cite{OO21} have shown that there is actually an isomorphism     
\begin{equation}\label{OO-iso}
\PP(2^{(9)},3^{(13)}) \q \SL_2\cong \cF_U^\ast,
\end{equation}
between the GIT quotient and the Baily-Borel compactification of $\cF_U$.

There is another description of the projective model of $\pi:S\to \PP^1$: as
$$\pi_\ast \cO_S(2C)\cong \cO_{\PP^1}\oplus \cO_{\PP^1}(-4),$$
the natural surjection $\pi^\ast\pi_\ast \cO_S(2C) \to \cO_S(2C)$ defines a double covering $$S\to \bF_4=:\PP(\cO_{\PP^1}\oplus \cO_{\PP^1}(-4)).$$ Its Weierstrass model is a double covering of $\bF_4$ branched over the trisection $x^3+axz^2+bz^3=0$ and the section $z=0$.  Conversely, Let $A=\PP(\cO_{\PP^1}(-4))$ and $F$ a ruling of $\bF_4$. Choose $B\in |3A+12F|$ which is disjoint from $A$ and smooth, then the double covering of $\bF_4$ branched over $(A+B)$ is a unigonal K3 surface.

\subsection{Noether-Lefschetz loci on $\cF_g$}The Noether-Lefschetz (NL) locus $\cN \cL^1(\cF_g)\subset \cF_g$ is the locus parametrizing $K3$ surfaces in $\cF_{g}$ with Picard number greater than $1$. It is a union of countably many divisors defined as follows:  given $d,n\in\ZZ$, the {\it  NL-divisor}  $\cH_{d,n}^{g}\subset \cF_{g}$ is the locus of quasi-polarized K3 surfaces $(S,L)\in \cF_g$ whose Picard lattice $\Pic(S)$ contains a  rank two sublattice
\begin{equation}\label{lattice}\begin{array}{c|c|c} & L & \alpha \\\hline L & 2g-2 & d \\\hline \alpha & d & n\end{array}
\end{equation}
where  $\alpha\in \Pic(S)$. In general, $\cH_{d,n}^g$ is not irreducible and one can define the so called {\it irreducible NL-divisor} $\cP_{d,n}^g$ by requiring  the lattice \eqref{lattice} to be primitive. 
As divisors,  $\cH_{d,n}^g$ can be written as a linear combination of primitive NL divisors via the triangulated relation
\begin{equation}\label{triangular}
    \cH^{g}_{d,n}=\sum\limits_{(d_i,n_i) } \mu_{d_i,n_i} \cP^{g}_{d_i,n_i}
\end{equation}
where $(d_i,n_i)$ runs over $\ZZ^2$ modulo the relation $(d,n)\sim (d', n')$ if \begin{equation}\label{relationprim}
   d^2-2n(g-1)=(d')^2-2n'(g-1)\quad \hbox{and} \quad d\equiv d'\mod 2g-2;
  \end{equation}   the coefficient $\mu_{d_i,g_i}\in \{0,1,2\}$ is the number of integer solutions $(x,y)$ of the equations
$$ (d^2_i-2n_i(g-1))x^2=d^2-2n(g-1); ~(2g-2)y=d-x n_i.$$

Let $\Delta_{d,g}$ be the determinant of the matrix \eqref{lattice} and let $\varpi$ be the generator of the cyclic group $\Lambda_g^\vee/\Lambda_g$. According to \cite[Lemma 3]{MP13}, the projection $\overline{\alpha} $ of $\alpha$ to $\rH^2_{prim}(S,\ZZ)$ is a vector with norm 
$\frac{\Delta_{d,g}}{4g-4}$ and the image of $\overline{\alpha}$ in $ \Lambda^\vee_g/\Lambda_g$ is $d\varpi$, thus
the NL divisor $\cH^g_{d,n}$ can be identified as the special cycle of codimension one 
\begin{equation}
    \cZ(\frac{\Delta_{d,g}}{4g-4}, d):=\tilde{\rO} (\Lambda_g)\backslash \sum_{\substack{ v^2=\frac{\Delta_{d,g}}{4g-4},\\ v\equiv d \varpi \mod \Lambda_g }} v^\perp
\end{equation}
Note that $ \cZ(\frac{\Delta_{d,g}}{4g-4}, d)$ has multiplicity two if $2d\varpi=0$ in $\Lambda_g^\vee/\Lambda_g$ because $v$ and $-v$ will both occur in taking the sum. 

Similarly, one can define the higher codimensional Noether-Lefschetz loci $\cN\cL^m(\cF_g)$ parametrizing quasi-polarized K3 surfaces in $\cF_g$ with Picard number greater than $m$.  To specify the irreducible components of $\cN\cL^m(\cF_g)$, we  define the NL-cycles of codimension $m$ on $\cF_g$ as below.  
\begin{definition}
Let $M$ be a symmetric matrix of signature $(1,m)$.  We define $$\cH_{M}^g\subseteq \cN\cL^m(\cF_g)$$ to be the locus of quasi-polarized K3 surfaces $(S,L)\in\cF_g$ for which there exist classes $$\alpha_1,\ldots, \alpha_m\in \Pic(S)$$ such that the gram matrix of ($L, \alpha_1, \ldots, \alpha_m)$ is $M$.
\end{definition}

Using the period map, the NL-cycle $\cH_M^g$ can be identified as linear combinations of special cycles in the following way:  assume $L\cdot \alpha_i=d_i\in \ZZ$. Let $\overline{\alpha}_i$ be the projection of $\alpha_i$ in $\rH^2_{\rm prim}(S,\ZZ)$ and let $\overline{M}$ be the gram matrix of $\overline{\alpha_i}$. Let $\Omega_{M}$  be the collection of vectors  $\bv=(v_1,\ldots,v_m)\in \Lambda^m_g$ such that 
\begin{itemize}
    \item the gram matrix of  $(v_1,\ldots, v_m)$ is $\overline{M}$,
\item $v_i\equiv d_i \varpi \mod \Lambda_g$.
\end{itemize}
Then $\cH_M^g$ can be written as the quotient by  $\widetilde{\rO}(\Lambda_g)$ of the sum of subdomains
\begin{equation}\label{eq：heegnercycle}
   \widetilde{\rO}(\Lambda_g)\backslash \sum\limits_{\substack{\{v_1,\ldots,v_m\} \\\mathrm{s.t.} (v_1,\ldots, v_m)\in \Omega_M}} U(v_1,\ldots, v_m)^\perp  
\end{equation}
where the sum is taken over the $m$-ary subsets of $\Lambda_g$ consisting of components of vectors in $\Omega_M$. 
One can easily see the irreducible components of $\cH^g_M$ are the natural cycles. A simple fact is that  $\cH_M^g$ is irreducible if and only if all the sublattices $\mathtt{M}\subseteq \Lambda_g$ containing $e_1+(g-1)f_1$, having gram matrix $M$ with respect to $(e_1+(g-1)f_1, v_1, \ldots, v_m)$ for some $(v_1,\ldots,v_m)\in \mathtt{M}^m$, are lying in the same $\widetilde{\rO}(\Lambda_g)$-orbit. In the rest of this section, we will give some examples of NL-cycles and compute the irreducible components.

\subsection{First example: locus of exceptional K3 surfaces}A typical example is the NL-locus of K3 surfaces in $\cF_g$ with $(-2)$-exceptional curves. To describe the irreducible components of this locus, we need  the following result which characterizes the $\widetilde{\rO}(V)$-orbits of primitive vectors for certain $V$. 
 
\begin{proposition}[Eichler's criterion]  \label{Eichler}
Suppose that $V$ is an even integral lattice and it contains two copies of hyperbolic lattice. Then two primitive vectors $x,y\in V$ are lying  in the same $\widetilde{\rO}(V)$-orbit if and only if  $x^2=y^2$ and $x^\ast=y^\ast \mod V$, where $x^\ast, y^\ast$ are the dual of $x$ and $y$ in $V^\vee$. 
\end{proposition}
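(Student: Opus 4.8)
The plan is to deduce the criterion from the theory of \emph{Eichler transvections}, the standard tool for analyzing orbits under the stable orthogonal group $\widetilde{\rO}(V)$ (the subgroup of $\rO(V)$ acting trivially on the discriminant group $V^\vee/V$). For an isotropic vector $e\in V$ and a vector $a\in e^\perp$, one sets
$$E_{e,a}(v)=v-(a,v)\,e+(e,v)\,a-\tfrac{1}{2}(a,a)(e,v)\,e, \qquad v\in V.$$
First I would record the routine facts: each $E_{e,a}$ is an isometry of $V$, one has $E_{e,a}E_{e,b}=E_{e,a+b}$, and---crucially---$E_{e,a}$ fixes $V^\vee/V$ pointwise, so it lies in $\widetilde{\rO}(V)$. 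Write $E_V\subseteq \widetilde{\rO}(V)$ for the subgroup they generate.

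The necessity (``only if'') is immediate: any $g\in\widetilde{\rO}(V)$ preserves the form, whence $x^2=(gx)^2$; it also preserves the divisor $\mathrm{div}(x)$ (the positive generator of $(x,V)\subseteq\ZZ$), and since $g$ fixes $V^\vee/V$ we get $(gx)^\ast=g(x^\ast)\equiv x^\ast \bmod V$. Thus both invariants $x^2$ and $x^\ast\bmod V$ are constant on $\widetilde{\rO}(V)$-orbits.

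For sufficiency, the heart of the matter is to show that the \emph{Eichler subgroup} $E_V$ already acts transitively on the primitive vectors with a fixed value of $(x^2,\,x^\ast\bmod V)$; the criterion then follows since $E_V\subseteq\widetilde{\rO}(V)$. Fix a decomposition $V=U_1\oplus U_2\oplus W$ with $U_i=\langle e_i,f_i\rangle$ two hyperbolic planes and $W=(U_1\oplus U_2)^\perp$. Given a primitive $x$, the plan is a reduction to a normal form depending only on the two invariants: writing $x=\alpha e_1+\beta f_1+w$ with $w\in U_2\oplus W$, I would apply the transvections $E_{e_1,a}$ and $E_{f_1,a}$ (with $a\in U_2\oplus W$), together with the ``cross'' transvections $E_{e_1,e_2}$, $E_{f_1,f_2}$, $E_{e_2,e_1}$, etc., mixing the two hyperbolic planes, to minimize the transverse component $w$ and to arrange that the pairing of $x$ with $U_1$ realizes the divisor $d=\mathrm{div}(x)$. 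The two hyperbolic planes supply exactly enough isotropic directions to move $x$ into the sublattice $U_1\oplus\langle w_0\rangle$, where $w_0\in W$ is a fixed vector encoding the class $x^\ast\bmod V$ and the $U_1$-coefficients are then forced by $d$ and $x^2$ (schematically $x'=d\,e_1+c\,f_1+w_0$ with $2dc+w_0^2=x^2$), giving a representative manifestly determined by $(x^2,\,x^\ast\bmod V)$.

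The main obstacle is the divisor bookkeeping in the non-unimodular case. When $\mathrm{div}(x)>1$ one cannot simply clear the transverse component to zero: the residue $x^\ast=x/\mathrm{div}(x)\bmod V$ must be preserved throughout, so the reduction has to track the discriminant class simultaneously with the integral coordinates, and one must verify that two primitive vectors sharing \emph{both} invariants land on the same normal form rather than merely on forms of equal length---this forces a careful check of the integrality and divisibility constraints on $(c,w_0)$. This is precisely where the hypothesis that $V$ contains two hyperbolic planes enters in an essential way, since a single plane does not furnish enough isotropic vectors to carry out the reduction, and checking the transvection identities that realize all admissible pairs $(x^2,\,x^\ast\bmod V)$ is the technical core of the argument. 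Should a fully self-contained derivation prove cumbersome, the statement is classical and may be cited from Eichler's work, but I would favor giving the transvection argument directly.
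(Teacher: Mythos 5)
Your proposal is correct and is, in substance, the same route as the paper's: the paper proves this proposition purely by citation to \cite[Lemma 3.5]{GHS10}, and the Eichler-transvection argument you sketch (the transvections $E_{e,a}$ lying in $\widetilde{\rO}(V)$ since $\tfrac{1}{2}(a,a)\in\ZZ$ and $E_{e,a}(v)\equiv v \bmod V$ for $v\in V^\vee$, followed by reduction to a normal form $d e_1+c f_1+w_0$ using the two hyperbolic planes) is exactly the classical argument underlying that reference. Your sufficiency step is only outlined rather than carried out, but you explicitly flag the citation fallback, which is precisely what the authors do.
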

\begin{proof}
See \cite[Lemma 3.5]{GHS10}.
\end{proof}

Then we have 
\begin{proposition}({\it cf}.~\cite[Proposition 2.11]{Debarre18})\label{nodalloci}The complement $\cF_g\backslash \cF_g^p$ is exactly the NL-divisor $\cH^g_{0,-2}$.
It is irreducible if $g \not\equiv 2\mod 4  $ and it is the union of $\cP_{0,-2}^g$ and $\cP_{g-1,\frac{g-2}{2}}^g$ if $g\equiv 2 \mod 4$.
\end{proposition}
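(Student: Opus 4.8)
The plan is to treat the statement in two stages: first a set-theoretic identification of the complement, then a lattice-theoretic enumeration of its irreducible components. For the first stage I would argue that $(S,L)\in\cF_g\setminus\cF_g^p$ means exactly that $L$ is nef and big (note $L^2=2g-2>0$) but not ample. On a K3 surface a nef and big class $L$ fails to be ample precisely when $L\cdot C=0$ for some irreducible curve $C$; by the Hodge index theorem such a $C$ is negative and orthogonal to $L$, and as an effective curve on a K3 it satisfies $C^2\geq-2$, forcing $C^2=-2$. Thus $\alpha=[C]$ gives $\alpha^2=-2$, $L\cdot\alpha=0$, so $(S,L)\in\cH_{0,-2}^g$. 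For the converse, given $(S,L)\in\cH_{0,-2}^g$ with such a class $\alpha$, Riemann-Roch together with Serre duality shows that $\alpha$ or $-\alpha$ is effective; writing the effective one as $\sum n_iC_i$ and using that every component is orthogonal to the nef class $L$, a negativity argument (if all $C_i^2\geq0$ the square would be nonnegative) produces an irreducible $(-2)$-curve $C_i$ with $L\cdot C_i=0$, so $L$ is not ample. This yields $\cF_g\setminus\cF_g^p=\cH_{0,-2}^g$.

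For the irreducible components I would pass to the period side, using the identification $\cF_g\cong\widetilde{\rO}(\Lambda_g)\backslash\rD_{\Lambda_g}$ and the description of NL-divisors as quotients of unions of special cycles from Section \ref{Sec3}. The class $\alpha$ above maps to a primitive vector $v\in L^\perp=\Lambda_g$ with $v^2=-2$, primitivity being automatic for a $(-2)$-vector, so the components of $\cF_g\setminus\cF_g^p$ are indexed by the $\widetilde{\rO}(\Lambda_g)$-orbits of such $v$. Since $\Lambda_g=\langle-2(g-1)\rangle\oplus U^{\oplus2}\oplus E_8(-1)^{\oplus2}$ contains two hyperbolic planes, Eichler's criterion (Proposition \ref{Eichler}) applies and the orbit of $v$ is determined by the pair $(v^2,v_*)$ with $v_*\in\Lambda_g^\vee/\Lambda_g\cong\ZZ/(2g-2)$. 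The divisibility $\mathrm{div}(v)$ divides $v^2=-2$, hence lies in $\{1,2\}$, producing at most two orbits. Divisibility $1$ forces $v_*=0$, always occurs, and (one checks using $\mathrm{div}_{\Lambda_g}(v)=1$) leaves the span $\langle L,v\rangle$ saturated, so this orbit is the primitive NL-divisor $\cP_{0,-2}^g$.

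The decisive computation concerns the divisibility $2$ orbit, where $v_*$ must be the unique element $w_0=(g-1)\varpi$ of order two. By Eichler's criterion this orbit is realized if and only if $w_0$ carries the discriminant-form value $q(w_0)\equiv v^2/4=-\tfrac12\bmod2\ZZ$; since $q(\varpi)=-\tfrac1{2(g-1)}$ one gets $q(w_0)=(g-1)^2q(\varpi)=-\tfrac{g-1}{2}$, and $-\tfrac{g-1}{2}\equiv-\tfrac12\bmod2\ZZ$ exactly when $g\equiv2\bmod4$. Equivalently, the candidate saturating class $\gamma=\tfrac{L+v}{2}$ lies in the even lattice $\Lambda$ only when $\gamma^2=\tfrac{g-2}{2}$ is an even integer, which again is $g\equiv2\bmod4$; in that case $L\cdot\gamma=g-1$ and $\gamma^2=\tfrac{g-2}{2}$ give the primitive lattice of type $(g-1,\tfrac{g-2}{2})$, identifying this orbit with $\cP_{g-1,\frac{g-2}{2}}^g$. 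Putting the two stages together gives $\cH_{0,-2}^g=\cP_{0,-2}^g$ when $g\not\equiv2\bmod4$ and $\cH_{0,-2}^g=\cP_{0,-2}^g\cup\cP_{g-1,\frac{g-2}{2}}^g$ when $g\equiv2\bmod4$. I expect the main obstacle to be the bookkeeping of divisibility against saturation: one must verify that a divisibility-$2$ vector genuinely exists in the indefinite lattice $\Lambda_g$ when the congruence holds, and that its rank-two span saturates to the lattice $(g-1,\tfrac{g-2}{2})$ and nothing larger, which is exactly where the evenness of $\Lambda$ and the gluing between $\langle L\rangle$ and $\Lambda_g$ enter.
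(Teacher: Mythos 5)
Your proof is correct and takes essentially the same route as the paper's: the components of $\cH^g_{0,-2}$ correspond to $\widetilde{\rO}(\Lambda_g)$-orbits of roots, which by Eichler's criterion (Proposition \ref{Eichler}) are classified by $v^\ast\in\Lambda_g^\vee/\Lambda_g\cong\ZZ/(2g-2)\ZZ$, with divisibility $1$ or $2$ and the order-two class $(g-1)\varpi$ occurring exactly when $g\equiv 2\bmod 4$ by the same discriminant-norm computation the paper performs. The extra pieces you supply --- the geometric identification $\cF_g\setminus\cF_g^p=\cH^g_{0,-2}$ (which the paper cites from Debarre rather than proving) and the saturation argument via $\gamma=\frac{L+v}{2}$ identifying the second component as $\cP^g_{g-1,\frac{g-2}{2}}$ --- are correct, and the residual existence of a divisibility-$2$ root that you flag as the main obstacle is settled by an explicit vector such as $e_1-(g-1)f_1+2\bigl(e_2+\frac{g-2}{4}f_2\bigr)$, exactly as in the paper's proof of Proposition \ref{irred-comp}.
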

\begin{proof}
The irreducible components of $\cH^g_{0,-2}$ correspond to the $\widetilde{\rO}(\Lambda_g)$-orbits of roots in $\Lambda_g$. According to Proposition \ref{Eichler}, any two roots $x$ and $y$ are lying in the same $\widetilde{\rO}(\Lambda_g)$-orbit if $x^\ast=y^\ast$ in $\Lambda_g^\vee/\Lambda_g$. As the discriminant group $$\Lambda_g^\vee/\Lambda_g\cong \ZZ/(2g-2)\ZZ$$ it is not difficult to show that $x^\ast$ is either $0$ or $g-1$ in $\Lambda_g^\vee/\Lambda_g$. The latter situation occurs only when $g\equiv 2\mod 4$. The assertion follows easily. 
\end{proof}

\subsection{Second example: binodal and cuspidal loci}Let us describe some Noether-Lefschetz loci of codimension $2$ on $\cF_g$ which plays an important role in this paper.  Let $\{t_1,\cdots, t_8\}$  be a standard basis of the $E_8$ lattice. Then we introduce two sublattices in $\Lambda$: let $\mathtt{A_{1,1}}$ be the sublattice  generated  by $e_1+(g-1)f_1$, $t_1$ and $t_3$ and let $\mathtt{A}_{2}$  be the  sublattice generated by $e_1+(g-1)f_1$, $t_1$ and $t_2$. The associated gram matrices under $q(-,-)$ are 
\begin{center}
$A_{1,1}=\left(\begin{array}{ccc} 2g-2 & 0 & 0 \\ 0 & -2 & 0 \\ 0 & 0 & -2\end{array}\right)$;  $ A_{2}=\left(\begin{array}{ccc} 2g-2 & 0 & 0 \\ 0 & -2 & 1 \\ 0 & 1 & -2\end{array}\right).$\end{center}
We let $\cH^g_{A_{1,1}}$ and $\cH^g_{A_2}$ be the associated NL-cycles of codimension $2$. Indeed, they parametrize quasi-polarized K3 surfaces in $\cF_g$ with at least two exceptional $(-2)$-curves. We may call $\cH_{A_{1,1}}^g$ the binodal locus and $\cH_{A_2}^g$ the cuspidal locus. This is because for  K3 surfaces in $\cH^g_{A_{1,1}}$ and $\cH^g_{A_2}$, after contracting the two exceptional $(-2)$-curves, one can obtain singular K3 surfaces with two isolated $A_1$-singularities or respectively, an isolated $A_2$-singularity.

\begin{proposition}\label{irred-comp}
$\cH_{A_2}^g$ is irreducible and $\cH_{A_{1,1}}^g$ is in one of the following situations:
\begin{enumerate}
    \item if $g\equiv 2\mod 4$, it is the union of two irreducible components $\cH_{A_{1,1}}^{g'}$ and $\cH_{A_{1,1}}^{g''}$, where $\cH_{A_{1,1}}^{g''}$ is lying in the intersection of two NL-divisors $\cP^g_{0,-2}$ and $\cH^g_{g-1,\frac{g-2}{2}}$. 
    \item if $g\equiv 3\mod 4$,  it is the union of two irreducible components $\cH_{A_{1,1}}^{g'}$ and $\cH_{A_{1,1}}^{g'''}$, where $\cH_{A_{1,1}}^{g'''}$ is lying in the intersection of two NL-divisors $\cP^g_{0,-2}$ and $\cH^g_{g-1,\frac{g-3}{2}}$.  
    \item  $\cH_{A_{1,1}}^g$ is irreducible otherwise.
\end{enumerate}

\end{proposition}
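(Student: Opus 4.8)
The plan is to translate the statement entirely into the classification of $\widetilde{\rO}(\Lambda_g)$-orbits of configurations of roots in $\Lambda_g$, and then to carry out that classification by repeated use of Eichler's criterion (Proposition \ref{Eichler}) together with the root computation already recorded in Proposition \ref{nodalloci}. Since the first row of both $A_{1,1}$ and $A_2$ has vanishing off-diagonal entries, any classes $\alpha_1,\alpha_2\in\Pic(S)$ realizing these Gram matrices satisfy $L\cdot\alpha_i=0$; hence each $\alpha_i$ lies in $\Lambda_g=L^\perp\cap\Lambda$ and projects to itself, so in the special-cycle description $\cH^g_{A_{1,1}}$ (resp.\ $\cH^g_{A_2}$) is the $\widetilde{\rO}(\Lambda_g)$-quotient of the union over pairs of orthogonal roots $v_1\cdot v_2=0$ (resp.\ roots with $v_1\cdot v_2=1$). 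By the irreducibility criterion stated just before the proposition, counting irreducible components is exactly counting $\widetilde{\rO}(\Lambda_g)$-orbits of such pairs $(v_1,v_2)$ inside $\Lambda_g=\langle e_1-(g-1)f_1\rangle\oplus U^{\oplus2}\oplus E_8(-1)^{\oplus2}$, a lattice containing two hyperbolic planes, so that Proposition \ref{Eichler} is available.

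For $\cH^g_{A_2}$ I would argue there is a single orbit. Because $v_1\cdot v_2=1$, the divisibility of each $v_i$ divides $1$, so $v_i^\ast=0$ in $\Lambda_g^\vee/\Lambda_g$; by Eichler's criterion all such roots are equivalent, so I may fix $v_1$ to be a standard $E_8(-1)$-root. Passing to $v_1^\perp\cap\Lambda_g$, which again contains $U^{\oplus2}$, a relative form of Eichler's criterion applied to the projection of $v_2$ (whose class in the discriminant group of $v_1^\perp\cap\Lambda_g$ is pinned down by $v_1\cdot v_2=1$ and $v_2^2=-2$) shows $v_2$ is unique up to the stabilizer of $v_1$. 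Equivalently, $\langle v_1,v_2\rangle\cong A_2(-1)$ is primitive (its discriminant form $\ZZ/3$ has no nonzero isotropic vector, hence no even overlattice), and its primitive embedding into $\Lambda_g$ is unique up to $\widetilde{\rO}(\Lambda_g)$ by Nikulin's criterion, the orthogonal complement being indefinite of large rank. Hence $\cH^g_{A_2}$ is irreducible.

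The case $\cH^g_{A_{1,1}}$ is the substantive one. Now the two roots are orthogonal, so divisibility is no longer forced to be $1$, and by Proposition \ref{nodalloci} each root has discriminant image $v_i^\ast\in\{0,g-1\}$, the value $g-1$ occurring only when $g\equiv2\bmod4$. The generic orbit, with $v_1^\ast=v_2^\ast=0$ and no further gluing, always exists (realized by $t_1,t_3\in E_8(-1)$) and yields $\cH^{g'}_{A_{1,1}}$. The device for detecting extra orbits is integrality of a half-class: consider $\beta=\tfrac12(L+v_2)$ and $\beta'=\tfrac12(L+v_1+v_2)$, with $\beta^2=\tfrac{g-2}{2}$, $\beta'^2=\tfrac{g-3}{2}$ and $L\cdot\beta=L\cdot\beta'=g-1$. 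Since $\Lambda$ is even, such a class can be integral only when its norm is an even integer, so $\beta$ can occur only for $g\equiv2\bmod4$ and $\beta'$ only for $g\equiv3\bmod4$; in those cases the configuration lies in $\cH^g_{g-1,(g-2)/2}$ resp.\ $\cH^g_{g-1,(g-3)/2}$ while still meeting $\cP^g_{0,-2}$ through a root of trivial image, producing precisely the second component $\cH^{g''}_{A_{1,1}}$ resp.\ $\cH^{g'''}_{A_{1,1}}$. A third putative orbit with $v_1^\ast=v_2^\ast=g-1$ is impossible: then $\tfrac12(v_1-v_2)$ would lie in $\Lambda_g$ with norm $-1$, contradicting evenness. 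For the remaining residues $g\equiv0,1\bmod4$ no half-class is even, so only the generic orbit survives and $\cH^g_{A_{1,1}}$ is irreducible.

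The hard part will be the exact orbit count for $A_{1,1}$: showing that in each congruence class the invariants above are complete, i.e.\ that the generic configuration is a single orbit and that the ``even half-class'' configuration forms exactly one further orbit. I would establish this by fixing $v_1$ via Eichler's criterion and then classifying $v_2$ as a root in $v_1^\perp\cap\Lambda_g$. The subtlety is that this orthogonal complement acquires an extra $\ZZ/2$ in its discriminant group (from the $E_7(-1)$- or $D$-type complement of $v_1$), so the admissible discriminant images of $v_2$, and hence the orbit count, must be recomputed there, and one must check that the stabilizer of $v_1$ surjects onto the relevant orthogonal group of the complement modulo its discriminant action. Carrying this bookkeeping through the residue classes $g\bmod4$, and matching the resulting second orbit with the divisor intersections named in the statement, is the main technical obstacle.
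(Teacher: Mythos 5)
Your proposal follows essentially the same route as the paper's proof: reduce the component count to classifying $\widetilde{\rO}(\Lambda_g)$-orbits of root configurations via Eichler's criterion, fix the first root to be $t_1$ (your parity argument on $\tfrac12(v_1-v_2)$ is exactly the justification needed for this reduction, since it shows the two roots cannot both be of non-primitive type), and then classify the second root inside the orthogonal complement $\Lambda_{A_1}=\langle e_1-(g-1)f_1\rangle\oplus U^{\oplus 2}\oplus E_8\oplus W_7$, whose discriminant group $\ZZ/(2g-2)\ZZ\times\ZZ/2\ZZ$ is precisely the ``extra $\ZZ/2$'' you anticipate. Your three half-class detectors match the paper's case analysis bijectively: since $v^2=-2$ forces ${\rm div}(v)\le 2$ in $\Lambda_{A_1}$, the class $v^\ast$ ranges over the four $2$-torsion elements, namely $0$ (generic orbit), $\tfrac12(e_1-(g-1)f_1)$ (integrality of your $\beta$, forcing $g\equiv 2\bmod 4$), $\tfrac12(e_1-(g-1)f_1+t_1+2t_2)$ (integrality of your $\beta'$, forcing $g\equiv 3\bmod 4$), and $\tfrac12(t_1+2t_2)$, which the paper excludes by comparing norms ($-\tfrac12\neq-\tfrac32$ in the discriminant form) and which is the same obstruction as your $\tfrac12(v_1-v_2)$ computation. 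One caution about your opening bookkeeping: in the $g\equiv3\bmod 4$ component \emph{both} roots have trivial image in $\Lambda_g^\vee/\Lambda_g$ (the paper's explicit vector pairs oddly with $t_2$, so has divisibility $1$ in $\Lambda_g$); hence your enumeration by individual discriminant images in $\{0,g-1\}$ cannot detect this orbit at all, and only the relative datum in $\Lambda_{A_1}$ --- equivalently your $\beta'$ --- separates it from the generic one. Your $A_2$ argument is a mild genuine variant: the paper classifies the vector $t_1+2v$ of norm $-6$ in $\Lambda_{A_1}$ by Eichler, while you use primitivity of $A_2(-1)$ plus Nikulin; to conclude uniqueness up to the \emph{stable} group $\widetilde{\rO}(\Lambda_g)$ rather than $\rO(\Lambda_g)$ you need surjectivity of the orthogonal group of the complement onto that of its discriminant form, which holds here since the complement contains $U^{\oplus2}$ --- the paper itself uses Nikulin this way in Proposition \ref{lattice-cuspidal-binodal}, so this route is sound.

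The remaining gaps are the two halves of the orbit count that you explicitly defer, and they are not cosmetic: (a) \emph{completeness} --- that your half-class invariants determine the $\widetilde{\rO}(\Lambda_g)$-orbit --- is exactly the Eichler enumeration of the $2$-torsion classes of $\Lambda_{A_1}^\vee/\Lambda_{A_1}$ sketched above; your stated plan for it coincides with the paper's actual proof, so nothing would fail, but as written the count is asserted rather than established; and (b) \emph{existence} --- evenness of $\beta^2=\tfrac{g-2}2$ or $\beta'^2=\tfrac{g-3}2$ is only a necessary congruence, and by itself does not produce a root $v$ in $\Lambda_{A_1}$ with the prescribed discriminant class; the paper settles this by writing down the explicit vectors $e_1-(g-1)f_1+2(e_2+\tfrac{g-2}{4}f_2)$ and $e_1-(g-1)f_1+t_1+2t_2+2(e_2+\tfrac{g+1}{4}f_2)$, and some such verification is needed before one can claim the second component actually occurs in the stated congruence classes.
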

\begin{proof} Let us first consider the case $\cH^g_{A_{1,1}}$.  Identify $\widetilde{\rO}(\Lambda_g)$ as the subgroup of $\rO(\Lambda)$ fixing the vector $e_1+(g-1)f_1$, then we just need to classify the $\widetilde{\rO}(\Lambda_g)$-orbits of lattice $M\subset \Lambda$ satisfying $e_1+(g-1)f_1\in \mathtt{M}$ and $\mathtt{M}\cong \mathtt{A_{1,1}}$  via an isometry fixing the vector $e_1+(g-1)f_1$.  One should note that $\cP^g_{g-1,\frac{g-2}{2}}\cap \cP^g_{g-1,\frac{g-2}{2}}=\emptyset $ by the Hodge index theorem.    According to Proposition \ref{nodalloci},  $ \cH^g_{A_{1,1}}$ is contained in the primitive NL-locus $\cP^g_{0,-2}$.  Hence up to a $\widetilde{\rO}(\Lambda_g)$-action, we can assume that $\mathtt{M}$ is spanned by the vectors $e_1+(g-1)f_1, t_1$ and $v$ for some root $v\in \Lambda$ orthogonal to $e_1+(g-1)f_1$ and $t_1$. 

Denote by 
$$\Lambda_{A_1}=\left<e_1-(g-1)f_1\right>\oplus U^{\oplus 2}\oplus E_8\oplus W_7$$
the orthogonal complement of $e_1+(g-1)f_1$ and $t_1$,  where $W_7=t_1^\perp\subset E_8$ is the orthogonal complement of $t_1$ in $E_8$. We are reduced to consider the $\widetilde{\rO}(\Lambda_{A_1})$-orbits of $v$.  Due to Proposition \ref{Eichler}, we know that the $\widetilde{\rO}(\Lambda_{A_1})$-orbits of $v$ is determined by $v^\ast$ in $\Lambda_{A_1}^\vee/\Lambda_{A_1}$.  Let $div(v)$ be the divisibility of $v$. Note that $div(v)$ is at most $2$  as $v^2=-2$. The discriminant group $\Lambda_{A_1}^\vee/\Lambda_{A_1}$ is isomorphic to $\ZZ/(2g-2)\ZZ\times \ZZ/2\ZZ$, which is generated by $\frac{(e_1-(g-1)f_1)}{2g-2}$ and $\frac{(t_1+2t_2)}{2}$. This gives three possibilities:

\begin{enumerate}[leftmargin=*]
    \item [i)]   $div(v)=1$ and $v^\ast=0$ in $\Lambda_{A_1}^\vee/\Lambda_{A_1}$;
    \item [ii)]  $div(v)=2$ and $v^\ast=\frac{1}{2}(e_1-(g-1)f_1)$ in $\Lambda_{A_1}^\vee/\Lambda_{A_1}$. By computing the norm of $v^\ast$, we get  $g\equiv 2 \mod 4$. In this case, one can take $v$ to be the vector $$e_1-(g-1)f_1+2(e_2+\frac{g-2}{4}f_2),$$
    where $(e_2,f_2)$ is the standard basis of the second hyperbolic lattice.
    
    \item [iii)] $div(v)=2$ and $v^\ast=\frac{1}{2}(e_1-(g-1) f_1+t_1+2t_2)$ in $\Lambda_{A_1}^\vee/\Lambda_{A_1}$. The normal computation shows that $g\equiv 3\mod 4$.  In this case, one can take $v$ to be the vector 
    $$ e_1-(g-1) f_1+t_1+2t_2+2(e_2+\frac{g+1}{4} f_2).$$
    
   \item [iiv)] $div(v)=2$ and $v^\ast=\frac{1}{2}(t_1+2t_2) $ in $\Lambda_{A_1}^\vee/\Lambda_{A_1}$, this is impossible because the norm of $v^\ast$ is $-\frac{1}{2}$ while $ \frac{1}{2}(t_1+2t_2)$ is $-\frac{3}{2}$. 
\end{enumerate}

For $\cH^g_{A_2}$, the computation is similar. We just need to classify the $\widetilde{\rO}(\Lambda_g)$-orbits of lattices spanned by $e_1+(g-1)f_1$, $t_1$ and $v$ for some $v$ orthogonal to $e_1+(g-1)f_1$, $v^2=-2$ and $t_1\cdot v=1$. The vector $t_1+2v$ lies in $\Lambda_{A_1}$ with norm $-6$. One can see that  the divisibility of $t_1+2v$ in $\Lambda_{A_1}$ is either $2$ or $6$. Among all cases, there is only one possibility:
$$(t_1+2v)^\ast= \frac{t_1+2t_2}{2}~\hbox{in $\Lambda_{A_1}^\vee/\Lambda_{A_1}$}.$$
One can take $v=t_2$ in this case. The rest of assertions follows easily. 
\end{proof}

Furthermore, one can consider the binodal locus and cuspidal locus on the moduli space of lattice polarized K3 surfaces. The following result is straightforward. 
\begin{proposition}\label{lattice-cuspidal-binodal}
Let $\cH_{U,A_2}$ and $\cH_{U,A_{1,1}}$ be the cuspidal and binodal locus of the moduli space of $U$-lattice polarized K3 surface respectively. Then $\cH_{U,A_2}$  and $\cH_{U,A_{1,1}}$ are both irreducible  and they are a multiple of special cycles defined in Example \ref{Modularity-unimodular}.
\end{proposition}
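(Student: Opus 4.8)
The plan is to reduce the statement to an orbit count for the lattice $U^\perp$ and then to run the argument of Proposition \ref{irred-comp}, which becomes much shorter because $U^\perp$ is \emph{unimodular}. Recall from Section \ref{Elliptic} that $\cF_U=\Gamma_U\backslash \rD_{U^\perp}$ with $U^\perp\cong U^{\oplus 2}\oplus E_8^{\oplus 2}$ an even unimodular lattice of signature $(2,18)$; since $U$ is a unimodular direct summand of $\Lambda$, restriction identifies $\Gamma_U$ with $\rO(U^\perp)$, and unimodularity gives $\widetilde{\rO}(U^\perp)=\rO(U^\perp)$. As in the discussion preceding \eqref{eq：heegnercycle}, the irreducible components of $\cH_{U,A_2}$ (resp.\ $\cH_{U,A_{1,1}}$) are the natural cycles indexed by the $\rO(U^\perp)$-orbits of sublattices $\mathtt{M}\subset U^\perp$ whose two generators have Gram matrix $A_2$ (resp.\ $A_1\oplus A_1$); so irreducibility is exactly the uniqueness of such a sublattice up to $\rO(U^\perp)$.

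First I would establish irreducibility. As $U^\perp$ contains two hyperbolic planes and is unimodular, Eichler's criterion (Proposition \ref{Eichler}) makes all roots a single orbit, so I may fix the first generator to be a root $t_1$ in the first $E_8$-summand. Its complement is $t_1^\perp\cong U^{\oplus 2}\oplus W_7\oplus E_8$, where $W_7=t_1^\perp\subset E_8$ is a copy of $E_7$; thus $(t_1^\perp)^\vee/t_1^\perp\cong\ZZ/2\ZZ$, generated by a class of norm $-\tfrac32\bmod 2\ZZ$ (represented by $\tfrac12(t_1+2t_2)$). For $\cH_{U,A_{1,1}}$ the second generator is a root $v\in t_1^\perp$, whose orbit is governed by $v^\ast\in\ZZ/2\ZZ$; divisibility $2$ would force $(v^\ast)^2=-\tfrac12\bmod 2\ZZ$, incompatible with $-\tfrac32$, so $v$ is primitive with $v^\ast=0$ and there is a unique orbit. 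For $\cH_{U,A_2}$ the second generator $v$ satisfies $v^2=-2$ and $t_1\cdot v=1$, and $w=t_1+2v\in t_1^\perp$ has $w^2=-6$; writing $v=-\tfrac{t_1}{2}+\tfrac{w}{2}$ and imposing $v\in U^\perp$, the gluing of $\langle t_1\rangle$ to $t_1^\perp$ inside the unimodular lattice $U^\perp$ forces $w$ to have divisibility $2$ with $w^\ast$ the nonzero class of $(t_1^\perp)^\vee/t_1^\perp$ (consistently, $(w^\ast)^2=-\tfrac32$; one may take $v=t_2$). Eichler's criterion again yields a single orbit, so both loci are irreducible.

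Granting irreducibility, the comparison with the special cycles is formal. Because $U^\perp$ is unimodular, Example \ref{Modularity-unimodular} applies: the special cycle $Z(\beta)$, with $\beta$ the half-Gram matrix of the two extra classes (so that $2\beta$ is $A_2$, resp.\ $A_1\oplus A_1$), is the $\Gamma_U$-quotient of $\sum_{\bv\in\Omega_\beta}U(\bv)^\perp$. The NL-cycle $\cH_{U,A_2}$ (resp.\ $\cH_{U,A_{1,1}}$) is the quotient of the same sum, reorganized over unordered subsets, as in \eqref{eq：heegnercycle}. By the orbit count above every subdomain $U(\bv)^\perp$ occurring lies in one $\Gamma_U$-orbit, so both $Z(\beta)$ and the NL-cycle are positive-integer multiples of a single reduced irreducible cycle; hence the NL-cycle is a rational multiple of $Z(\beta)$.

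The crux is the orbit classification in the second paragraph, namely pinning down the divisibility and discriminant class of the auxiliary vectors $v$ and $w$ --- exactly where several orbits appeared in the non-unimodular case of Proposition \ref{irred-comp}. Here unimodularity collapses the first step and the gluing inside $U^\perp$ rigidifies the second, so a unique orbit survives each time; the only genuine bookkeeping is to check that the stabilizer of $t_1$ acts through enough of $\rO(t_1^\perp)$ for Eichler's criterion on $t_1^\perp$ to apply.
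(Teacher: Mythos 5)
Your proposal is correct, but it proves irreducibility by a genuinely different route than the paper. The paper's proof is very short: it identifies $\cH_{U,A_2}$ and $\cH_{U,A_{1,1}}$ with $\frac{1}{2}Z\left(\begin{array}{cc} -2 & 1 \\ 1 & -2 \end{array}\right)$ and $\frac{1}{2}Z\left(\begin{array}{cc} -2 & 0 \\ 0 & -2 \end{array}\right)$ exactly as you do (the factor $\frac12$ coming from the ordered-pairs-versus-unordered-sets count that you also describe, though you only conclude ``a rational multiple'' where the paper pins the constant to $\frac12$ --- your own reorganization argument in fact gives $\frac12$, since $v_1\neq v_2$ for a rank-two Gram matrix), and then it settles irreducibility in one stroke by citing Nikulin's theorem \cite[Theorem 1.14.4]{Ni79} on the uniqueness of primitive embeddings of the two rank-two lattices into $U^{\oplus 2}\oplus E_8^{\oplus 2}$. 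You instead redo the two-step orbit computation of Proposition \ref{irred-comp} in the unimodular setting: fix a root $t_1$ (single orbit by Proposition \ref{Eichler}), pass to $t_1^\perp\cong U^{\oplus 2}\oplus E_7\oplus E_8$ with discriminant $\ZZ/2\ZZ$ of norm $-\frac32$, and rule out or pin down the divisibility and discriminant class of the second vector ($v^\ast=0$ for $A_{1,1}$ since $-\frac12\not\equiv-\frac32 \bmod 2\ZZ$; $w=t_1+2v$ of divisibility $2$ with $w^\ast$ the nonzero class for $A_2$, forced by the glue condition $v=\frac{w-t_1}{2}\in U^\perp$) --- all of which checks out, including the primitivity of $w$ and your flagged ``bookkeeping'' point, which is handled by the standard gluing observation that any isometry of $t_1^\perp$ acting trivially on its discriminant extends by the identity on $\langle t_1\rangle$ (the same step is implicit in the paper's Proposition \ref{irred-comp}). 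The trade-off: Nikulin's theorem buys brevity and avoids the case analysis entirely, while your Eichler-style computation is self-contained within the toolkit the paper already deploys for Proposition \ref{irred-comp} and makes visible exactly why the extra orbits of the non-unimodular case (where the summand $\langle e_1-(g-1)f_1\rangle$ enlarges the discriminant group) collapse here.
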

\begin{proof}
According to Example \ref{Elliptic}, $U^\perp\cong U^{\oplus 2} \oplus E_8^{\oplus 2} $ is unimodular. Using the period map,  we can write $\cH_{U,A_2}$ and $\cH_{U,A_{1,1}}$  as a  multiple of special cycles via \eqref{specialcycle} and \eqref{eq：heegnercycle}:  
\begin{equation}
    \cH_{U,A_2}=\frac{1}{2}Z(\left(\begin{array}{ccc} -2  & 1 \\ 1 & -2 \end{array}\right) ) ~\hbox{and}~\cH_{U,A_{1,1}}=\frac{1}{2}Z(\left(\begin{array}{ccc} -2  & 0 \\ 0 & -2 \end{array}\right) ).
\end{equation}
The multiplicity $\frac{1}{2}$ comes from the fact: if $\bv=(v_1,v_2)\in \Omega_{A_2}$ (resp.~ $\Omega_{A_{1,1}}$), then the vector $\bv'=(v_2,v_1)$ also occurs in $\Omega_{A_2}$ (resp.~$\Omega_{A_{1,1}}$). The NL-cycle is taking the sum over the collection of the set consisting of components $\{v_1,v_2\}$ while the special cycle is taking the sum over the collection of vectors $(v_1,v_2)$. 

  The irreducibility of them  follows from Nikulin's result \cite[Theorem 1.14.4]{Ni79}, which shows that the primitive embeddings of  $\left(\begin{array}{ccc} -2  & 1 \\ 1 & -2 \end{array}\right) $ and $\left(\begin{array}{ccc} -2  & 0 \\ 0 & -2 \end{array}\right) $  into $U^{\oplus 2} \oplus E_8^{\oplus 2}$ are unique. 
\end{proof}

\section{Deligne-Beilinson cohomology of stacks}\label{Sec4}
In this section, we review the theory of DB cohomology of algebraic varieties and extend it to the case of Deligne-Mumford stacks of finite type over $\CC$. Throughout this section, all schemes and stacks are assumed to be of finite type over $\CC$  and all (hyper)cohomologies are taken on analytic topoi unless otherwise stated.

\subsection{DB cohomology of smooth varieties and arbitrary varieties}
\begin{definition}(\cite{Bei84, EV88}) Let $X$ be a smooth complex algebraic variety, and $j:X\hookrightarrow \overline{X}$ be a good compactification with boundary divisor $Z$. The Deligne-Beilinson cohomology of $X$ is defined as the analytic hypercohomology 
$$\rH^q_{\DB}(X,A(p))= \rH^q (\overline{X}_{\an},A(p)_{\DB, (\overline{X}, X)})$$
where 
$$A(p)_{\DB, (\overline{X}, X)}= \mathrm{Cone} (R j_* A(p) \oplus \Omega_{X}^{\geq p}(\log Z)\xrightarrow{\epsilon-\iota} R f_* \Omega_{X}^\bullet)[-1]$$
is a complex of analytic sheaves on $\overline{X}$, called the Deligne-Beilinson complex of the pair $(\overline{X}, X)$. Here, $A$ is a subring of $\RR$ and $A(p)$ is the constant sheaf with value $(2\pi i)^p A$; the complex $\Omega^\bullet_X$ is the de Rham complex of holomorphic forms on $X$
and $\Omega^{\geq p}_X(\log Z)$ is the brutal truncation of the complex of meromorphic 
forms on $X$ with at most logarithmic poles along $Z$;  the maps $\epsilon$ and $\iota$ are the natural ones.
\end{definition}
Moreover, there is a natural map 
$\rH^{2p}_{\DB}(X,A(p))\to \rH^{2p}(X,A(p))$ and a 
DB-cycle class map 
$$\cl_{\DB}:\mathrm{CH}^p(X)\to \rH^{2p}_{\DB}(X,\ZZ(p))$$
defined by lifting the Betti fundamental classes. When $X$ is smooth and projective, they are compatible with the Abel-Jacobi map
\begin{equation}
    \xymatrix{
    0\ar[r]&\CH^p_{hom}(X)\ar[r]\ar[d]_{AJ}&\CH^p(X)\ar[rd]^{\cl_{\rm B}}\ar[d]_{\cl_{\DB}}& \\
    0\ar[r]&J^{p}(X)\ar[r]&\rH^{2p}_{\DB}(X,\ZZ(p)) \ar[r]& \rH^{2p}(X,\ZZ(p))
    }
\end{equation}

The notion of DB cohomology can be extended to the case of arbitrary varieties and even more generally, arbitrary (separated) simplicial schemes via the theory of cohomological descent ({\it cf.}~\cite[\S 5]{EV88}). We briefly recall the constructions. For a separated simplicial scheme $X_\bullet$, one can find a diagram
\begin{equation}\label{resolution_of_coh_desc}
	\xymatrix
	{
		U_{\bullet}\ar[r]^{j}\ar[d]_{p} &\overline{U}_{\bullet}\\
		X_{\bullet}
	}
\end{equation}
where $p$ is (a proper hypercovering, hence) of cohomological descent, i.e.  the cohomologies of $X_{\bullet}$ and of $U_{\bullet}$ are identified via $p$, and $j$ is a good compactification \cite[8.3.2]{De74}. Then the Deligne-Beilinson cohomology of $X_\bullet$ is defined to be the analytic hypercohomology
$$\rH^q_{\DB}(X_{\bullet},A(p))= \rH^q (\overline{U}_{\bullet, \an}, A(p)_{\DB, (\overline{U}_\bullet, U_\bullet)})$$
where $A(p)_{\DB, (\overline{U}_\bullet, U_\bullet)}$ is the Deligne-Beilinson complex of $(\overline{U}_\bullet, U_\bullet)$.

Many properties still hold in such a general case (see \cite[5.4]{EV88}). Among them we state a long exact sequence involving the DB cohomology and Betti cohomology of $X_\bullet$, which is the most important for our purpose. Recall \cite[Definition 8.3.4]{De74} that the mixed Hodge structure on $\rH^{q}(X_\bullet, \ZZ)$ is defined by transport from that on $\rH^{q}(U_\bullet, \ZZ)$ via the isomorphism $p^*$ (for any choice of diagram (\ref{resolution_of_coh_desc})). We denote by  $\{F^p\rH^q(X_\bullet,\CC)\}$  the Hodge filtration  on $\rH^q(X_\bullet,\CC)$. Then we have 
\begin{proposition}(\cite[2.10(b) and 5.4]{EV88})
Let $X_\bullet$ be a (separated) simplicial scheme, then there exists a long exact sequence
    \begin{equation}\label{long_exact_sequence_involving_DB_cohomology}
        \cdots \to \rH^q_{\DB}(X_\bullet, A(p)) \to \rH^q(X_\bullet, A(p)) \to \rH^q(X_\bullet, \CC)/F^p\rH^q(X_\bullet, \CC) \to \cdots
    \end{equation}
and this sequence is contravariantly functorial for any morphism $X'_\bullet\to X_\bullet$.
\end{proposition}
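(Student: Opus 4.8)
The plan is to reduce the statement to the homological algebra of the defining mapping cone, feed in Deligne's Hodge theory to identify the resulting terms, and then propagate from smooth varieties to simplicial schemes by cohomological descent. For a single smooth $X$ with good compactification $j\colon X\hookrightarrow\overline X$ and boundary $Z$, write $A(p)_{\DB}$ for the Deligne--Beilinson complex. Since it is defined as $\mathrm{Cone}(\epsilon-\iota)[-1]$, it sits tautologically in a distinguished triangle of analytic complexes on $\overline X$,
\[
A(p)_{\DB}\to Rj_*A(p)\oplus\Omega^{\geq p}_{\overline X}(\log Z)\xrightarrow{\ \epsilon-\iota\ }Rj_*\Omega^\bullet_X\to A(p)_{\DB}[1].
\]
Applying analytic hypercohomology turns this into a long exact sequence, and I would identify its terms as follows: the holomorphic de Rham theorem gives $Rj_*\Omega^\bullet_X\simeq\Omega^\bullet_{\overline X}(\log Z)$, computing $\rH^q(X,\CC)$; the complex $Rj_*A(p)$ computes $\rH^q(X,A(p))$; and the brutal truncation $\Omega^{\geq p}_{\overline X}(\log Z)$ computes $F^p\rH^q(X,\CC)$. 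This last identification is exactly Deligne's description of the Hodge filtration through the b\^ete filtration on the logarithmic de Rham complex, and it comes packaged with the decisive fact that $F^p\rH^q(X,\CC)\hookrightarrow\rH^q(X,\CC)$ is injective, i.e.\ that the Hodge--de Rham spectral sequence degenerates at $E_1$.

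To obtain the three-term shape of the statement, with the quotient $\rH^q(X,\CC)/F^p$ in place of the pair, I would trade the triangle above for
\[
A(p)_{\DB}\to Rj_*A(p)\to\Omega^{<p}_{\overline X}(\log Z)\to A(p)_{\DB}[1],
\]
where $\Omega^{<p}_{\overline X}(\log Z)=\Omega^\bullet_{\overline X}(\log Z)/\Omega^{\geq p}_{\overline X}(\log Z)$. This comes from the octahedral axiom applied to the composite $A(p)_{\DB}\to Rj_*A(p)\oplus\Omega^{\geq p}_{\overline X}(\log Z)\to Rj_*A(p)$, together with the identification $\mathrm{Cone}\bigl(\Omega^{\geq p}_{\overline X}(\log Z)\hookrightarrow\Omega^\bullet_{\overline X}(\log Z)\bigr)\simeq\Omega^{<p}_{\overline X}(\log Z)$. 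The injectivity of $F^p\hookrightarrow\rH^q$ noted above forces the long exact sequence of $0\to\Omega^{\geq p}\to\Omega^\bullet\to\Omega^{<p}\to0$ to break into short exact pieces, so that $\Omega^{<p}_{\overline X}(\log Z)$ computes precisely $\rH^q(X,\CC)/F^p\rH^q(X,\CC)$. Hypercohomology of the second triangle then yields the asserted sequence, whose middle arrow is the composite of the coefficient map $\rH^q(X,A(p))\to\rH^q(X,\CC)$ with the projection onto the quotient.

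For a separated simplicial scheme $X_\bullet$ I would run the same construction degreewise on a chosen diagram \eqref{resolution_of_coh_desc}: all three complexes make sense as simplicial analytic sheaves on $\overline U_\bullet$, the triangle is assembled simplicially, and its hypercohomology is computed by the associated total complex. The two ingredients that must be upgraded are that $\Omega^{\geq p}_{\overline U_\bullet}(\log)$ still computes the transported Hodge filtration $F^p\rH^q(X_\bullet,\CC)$ and that this filtration still injects into $\rH^q(X_\bullet,\CC)$; both follow from Deligne's theory of mixed Hodge structures on simplicial varieties \cite{De74}, which provides the $E_1$-degeneration and matches $F^\bullet$ with the b\^ete filtration across the cohomological descent isomorphism $p^*$. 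Independence of the chosen diagram and contravariant functoriality in $X_\bullet$ are then formal consequences of the functoriality of the cone construction and of cohomological descent. I expect the genuine obstacle to lie not in the homological algebra, which is formal, but in this Hodge-theoretic input: identifying $\rH^q(\Omega^{\geq p}(\log))$ with $F^p$ and, above all, establishing the injectivity $F^p\hookrightarrow\rH^q$ in the simplicial setting, which rests on the full force of \cite{De74}.
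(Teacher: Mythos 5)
Your proof is correct and takes essentially the same route as the paper, which offers no argument of its own but cites \cite[2.10(b) and 5.4]{EV88}, where precisely your steps are carried out: the tautological cone triangle, its rotation (via the octahedron) into $A(p)_{\DB}\to Rj_*A(p)\to \Omega^{<p}(\log Z)\to A(p)_{\DB}[1]$, the identification of $\Omega^{\geq p}(\log Z)$ with $F^p$ through $E_1$-degeneration, and the simplicial upgrade via Deligne's cohomological mixed Hodge complexes from \cite{De74}. The only cosmetic slip is attributing the quasi-isomorphism $\Omega^\bullet_{\overline{X}}(\log Z)\simeq Rj_*\Omega^\bullet_X$ to the holomorphic de Rham theorem, when it is Deligne's logarithmic comparison theorem (the de Rham theorem only gives $Rj_*A(p)\otimes\CC\simeq Rj_*\Omega^\bullet_X$ up to the coefficient extension).
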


\subsection{Cohomology on Deligne-Mumford stacks}
Let $\scX$ be a Deligne-Mumford stack of finite type over $\CC$. We denote by $\Et(\scX)$ the \'etale site of $\scX$, whose objects are \'etale morphisms $(U\to \scX)$, the morphisms from $(U'\to \scX)$ to $(U\to \scX)$ are morphisms of schemes $(U'\to U)$ such that the corresponding diagram with base $\scX$ is (2-)commutative, and the coverings of $(U\to \scX)$ are those $((U_i\to \scX)\to (U\to \scX))_i$ for which $(U_i\to U)_i$ is an \'etale covering of $U$. The resulting topos is denoted by $\scX_{\et}$, and the sheaves are called \'etale sheaves on $\scX$.
\begin{remark}\label{separated_sub_site}
Consider the full subcategory ${\rm \acute{E}}{\rm t}^{\rm sep}(\scX)$ of $\Et(\scX)$ whose objects are those $(U\to \scX)$ for which $U$ is \textit{separated}. It is not difficult to see that ${\rm \acute{E}}{\rm t}^{\rm sep}(\scX)$ is naturally a site whose resulting topos is equivalent to $\scX_{\et}$.
\end{remark}

Let $\cF$ be an \'etale sheaf on $\scX$, then one can define the localized topos $\scX_{\et}/{\cF}$ which is simply the category $\scX_{\et}$ over $\cF$. In the  case where $\cF$ is given by an \'etale morphism $U\to \scX$, it is also denoted by $\scX_{\et}/U$, and the fact that it is indeed a topos follows from the following description: let $\Et(\scX)/U$ be the localized site, then the topos associated to $\Et(\scX)/U$ is equivalent to $\scX_{\et}/U$. Moreover, it is not hard to see  that  $\scX_{\et}/U$ is equivalent to  $U_{\et}$, the \'etale topos of $U$.

We now review the method to ``compute" the cohomology using an atlas. Let $\pi:X\to \scX$ be an \'etale atlas of $\scX$, i.e. $X$ is a scheme and $\pi$ is an \'etale surjective morphism. To any such $\pi$, we can associate its 0-coskeleton $$X_{\bullet}:=\cosk_0(\pi),$$ which is a simplicial scheme, and an augmentation $e:X_\bullet\to \scX$ (\cite[5.1.4]{De74}, \cite[13.5]{LMB}). Note that replacing $X$ by $\amalg_i U_i$ where $(U_i)_i$ is a Zariski open covering of $X$ consisting of affine schemes (and finite in number), we may always assume that $X$ is separated, and $X_{\bullet}$ is then a separated simplicial scheme.

Since $X_{\bullet}$ is a simplicial \'etale sheaf on $\scX$, there is a localized topos $\scX_{\et}/X_{\bullet}$  and a morphism of topoi $$e_{\et}:\scX_{\et}/X_{\bullet}\to \scX_{\et}$$ ({\it cf}.~\cite[\S 2.4.5 and \S 2.4.11]{Olsson}). 
It follows from definition that the localized topos $\scX_{\et}/X_{\bullet}$ is indeed equivalent to the \'etale topos $X_{\bullet, \et}$ of $X_{\bullet}$ defined by Deligne ({\it cf.}~\cite[12.4]{LMB}). Recall that  sheaves in $X_{\bullet, \et}$ are those
$$((\cF_n)_{n\geq 0}, (\theta_{\delta})_{\delta:[n']\to [n]})$$
where for each $n$, $\cF_n$ is an \'etale sheaf on $X_n$, and for each $\delta:[n']\to [n]$, $\theta_{\delta}: \cF_{n'}\to \delta_*\cF_n$ is a morphism of \'etale sheaves on $X_{n'}$ (here we denote by $\delta$ also the morphism $X_n\to X_{n'}$ associated to $\delta:[n']\to [n]$), satisfying suitable compatibility conditions on composition of transition maps.

Since $\pi$ is \'etale, surjective and of finite type, hence a covering in the canonical topology, $e_{\et}:X_{\bullet, \et} \to \scX_{\et}$ is of cohomological descent ({\it cf.}~\cite[2.4.16]{Olsson}), i.e., let $D^+(\scX_{\et}, \ZZ)$ be the derived category of bounded below complexes of $\ZZ$-modules in $\scX_{\et}$, then for any $\cF\in D^+(\scX_{\et}, \ZZ)$, the morphism
\begin{equation*}
    \cF \to Re_{\et,*}e^{-1}_{\et} \cF
\end{equation*}
in $D^+(\scX_{\et}, \ZZ)$ is an isomorphism. 
Note that $e^{-1}_{\et}(\cF)$ has the following concrete description: it is the complex of sheaves in $X_{\bullet, \et}$ where the degree $n$ component is simply the restriction of $\cF$ to $X_{n, \et}$, and the transition maps are those induced by $X_{\bullet}$. Under these notations we have therefore an isomorphism

\begin{equation}\label{compute_cohomology_of_etale_sheaves}
    \rH^q(\scX_{\et}, \cF)\to \rH^q(X_{\bullet, \et}, e^{-1}_{\et}(\cF))
\end{equation}
\begin{remark}
In \cite[\S 9.2]{Olsson},  the constructions above are carried out for algebraic stacks, using the lisse-\'etale site $\rm{Lis}$-$\Et(\scX)$ of an algebraic stack $\scX$. Recall that $\rm{Lis}$-$\Et(\scX)$ is the category of \textit{smooth} morphisms over $\scX$, where coverings of a \textit{smooth} morphism $U\to \scX$ are those $((U_i\to \scX)\to (U\to \scX))_i$ for which $(U_i\to U)_i$ is an \textit{\'etale} covering of $U$. The resulting topos is denoted by $\scX_{\rm{lis}\text{-}\et}$.

In general cases, there are technical issues on comparing $\scX_{{\rm lis}\text{-}\et}/X_{\bullet}$ and $X_{\bullet, \et}$. For  Deligne-Mumford stacks, however, these issues disappear.
\end{remark}

There is a similar picture on the analytic side. For an \'etale atlas $\pi:X\to \scX$, the analytification $\pi^{\an}:X^{\an}\to \scX^{\an}$ is also 
a covering in the canonical topology (of the analytic topos $\scX_{\an}$). Therefore for $\cF\in D^+(\scX_{\an}, \ZZ)$, we have also an isomorphism
\begin{equation}\label{compute_cohomology_of_analytic_sheaves}
    \rH^q(\scX_{\an}, \cF)\to \rH^q(X_{\bullet, \an}, e^{-1}_{\an}(\cF))
\end{equation}

We now give the construction of mixed Hodge structures on the cohomology groups of $\scX$ using an atlas.

\begin{lemma}\label{comparison_between_two_atlases}
	Let $\pi_1:X_1\to \scX$ and $\pi_2:X_2\to \scX$ be two \'etale atlases, where $X_1$ and $X_2$ are separated schemes, and $e_1:(X_1)_\bullet\to \scX$ and $e_2:(X_2)_\bullet\to \scX$ are augmentations from the associated $0$-coskeletons to $\scX$. Then for $i=1, 2$, the transport of mixed Hodge structures from those of $\rH^{q}((X_i)_\bullet, \ZZ)$ via $e_i^*$ to $\rH^{q}(\scX, \ZZ)$ are the same. 
\end{lemma}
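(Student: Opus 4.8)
The plan is to compare each $e_i^*$ with the transport coming from a common refinement, thereby reducing to the case of a single refinement of atlases, and then to conclude by Deligne's functoriality of mixed Hodge structures together with their strictness.

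First I would form the fibre product $X_3:=X_1\times_{\scX}X_2$. Since $\scX$ is Deligne--Mumford, its diagonal is representable, so each $\pi_i$ is representable; hence $X_3\to X_2$ is the base change of the \'etale surjection $\pi_1$ and is itself \'etale and surjective, and likewise $X_3\to X_1$. Thus $X_3$ is an algebraic space that is \'etale and surjective over both $X_1$ and $X_2$. As $X_1$ and $X_2$ are of finite type, $X_3$ is quasi-compact, so I may choose a finite \'etale cover $\coprod_k V_k\to X_3$ by affine schemes and replace $X_3$ by the separated scheme $\coprod_k V_k$. This yields an \'etale atlas $\pi_3:X_3\to\scX$ together with \'etale surjective morphisms $g_i:X_3\to X_i$ with $\pi_i\circ g_i\cong\pi_3$. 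Applying $\cosk_0$, each $g_i$ induces a morphism of separated simplicial schemes $(g_i)_\bullet:(X_3)_\bullet\to(X_i)_\bullet$ compatible with the augmentations, namely $e_i\circ(g_i)_\bullet=e_3$. Comparing the structures of $X_1$ and $X_2$ with that of the common refinement $X_3$ will give the claim, so it suffices to treat a single refinement $g:X'\to X$ of atlases, with induced $g_\bullet:X'_\bullet\to X_\bullet$ and $e\circ g_\bullet=e'$.

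For such a refinement, cohomological descent \eqref{compute_cohomology_of_analytic_sheaves} (applied to the constant sheaf $\ZZ$) gives isomorphisms $e^*:\rH^q(\scX,\ZZ)\xrightarrow{\sim}\rH^q(X_\bullet,\ZZ)$ and $e'^*:\rH^q(\scX,\ZZ)\xrightarrow{\sim}\rH^q(X'_\bullet,\ZZ)$, and $e\circ g_\bullet=e'$ yields $g_\bullet^*\circ e^*=e'^*$. By definition the two transported structures $\mathrm{MHS}_X$ and $\mathrm{MHS}_{X'}$ on $\rH^q(\scX,\ZZ)$ are the pullbacks along $e^*$ and $e'^*$ of Deligne's canonical mixed Hodge structures on $\rH^q(X_\bullet,\ZZ)$ and $\rH^q(X'_\bullet,\ZZ)$. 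Since $g_\bullet$ is a morphism of separated simplicial schemes, Deligne's functoriality \cite[8.3]{De74} makes $g_\bullet^*$ a morphism of mixed Hodge structures for the canonical structures. Consequently $e'^*=g_\bullet^*\circ e^*$ is a morphism of mixed Hodge structures out of $(\rH^q(\scX,\ZZ),\mathrm{MHS}_X)$, while by definition it is also one out of $(\rH^q(\scX,\ZZ),\mathrm{MHS}_{X'})$, with the same bijective underlying map into the fixed target $\rH^q(X'_\bullet,\ZZ)$. As a bijective morphism of mixed Hodge structures is an isomorphism, both $\mathrm{MHS}_X$ and $\mathrm{MHS}_{X'}$ equal the pullback of the canonical structure of $\rH^q(X'_\bullet,\ZZ)$ along $e'^*$; hence they coincide.

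I expect the main obstacle to be the stack-theoretic bookkeeping of the first paragraph: checking that the common refinement can be arranged to be a \emph{separated scheme} mapping \'etale-surjectively to both $X_1$ and $X_2$, and that $\cosk_0$ is functorial enough to produce the compatible simplicial morphisms $(g_i)_\bullet$ over $\scX$. Once these formalities are settled, the Hodge-theoretic core is the routine combination of cohomological descent, Deligne's functoriality, and the strictness of morphisms of mixed Hodge structures.
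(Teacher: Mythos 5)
Your proposal is correct and follows essentially the same route as the paper: the paper also forms the fibre product $X_3=X_1\times_{\scX}X_2$, notes that the induced maps $(X_3)_\bullet\to (X_i)_\bullet$ are of cohomological descent, and concludes that the pullbacks are morphisms of mixed Hodge structures that are bijective, hence isomorphisms of mixed Hodge structures, so both transports agree with the one from $(X_3)_\bullet$. The only differences are presentational: you factor the argument through a one-refinement lemma applied twice and add the (harmless, and in fact automatic) bookkeeping of replacing $X_3$ by a separated scheme via a disjoint union of affines, a point the paper passes over silently.
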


\begin{proof}
    Let $X_3$ be the (2-)fiber product $X_1\times_{\scX}X_2$. We have the following pull-back diagram
    \begin{equation*}
	\xymatrix
	{
		X_3\ar[r]^{\widetilde{\pi_2}}\ar[d]_{\widetilde{\pi_1}} &X_1\ar[d]^{\pi_1}\\
		X_2\ar[r]_{\pi_2} & \scX
	}
	\end{equation*}
    where $\widetilde{\pi_1}$ and $\widetilde{\pi_2}$ are \'etale surjective. In the associated commutative diagram
    \begin{equation*}
	\xymatrix
	{
		(X_3)_{\bullet}\ar[r]^{\widetilde{e_2}}\ar[d]_{\widetilde{e_1}} &(X_1)_{\bullet}\ar[d]^{e_1}\\
		(X_2)_{\bullet}\ar[r]_{e_2} & \scX
	}
	\end{equation*}
    $\widetilde{e_1}$ and $\widetilde{e_2}$ (the morphisms between $0$-coskeletons induced by  $\widetilde{\pi_1}$ and $\widetilde{\pi_2}$, respectively) are then \'etale hypercoverings, hence of cohomological descent. Therefore in the following commutative diagram of cohomology groups
    \begin{equation}\label{diagram_of_iso_1}
	\xymatrix
	{
		\rH^{q}((X_3)_\bullet, \ZZ) &\rH^{q}((X_1)_\bullet, \ZZ) \ar[l]_{(\widetilde{e_2})^*}\\
		\rH^{q}((X_2)_\bullet, \ZZ) \ar[u]^{(\widetilde{e_1})^*}& \rH^{q}(\scX, \ZZ) \ar[l]^{e_2^*} \ar[u]_{e_1^*}
	}
	\end{equation}
    $(\widetilde{e_1})^*$ and $(\widetilde{e_2})^*$ are (morphisms of mixed Hodge structures and isomorphisms of the underlying $\ZZ$-modules, hence) isomorphisms of mixed Hodge structures
    , so for $i=1,2$ the transport of mixed Hodge structures from those of $\rH^{q}((X_i)_\bullet, \ZZ)$ via $e_i^*$ to $\rH^{q}(\scX, \ZZ)$ are both equal to the transport from that of $\rH^{q}((X_3)_\bullet, \ZZ)$.
\end{proof}

From this, a mixed Hodge structure on $\rH^{q}(\scX, \ZZ)$ is defined, as the transport from that of $\rH^{q}(X_\bullet, \ZZ)$ via $e^*$ using any \'etale atlas.  This gives in particular the Hodge filtration $\{F^p\rH^{q}(\scX, \CC)\}$ of $\rH^{q}(\scX, \ZZ)$ and (\ref{diagram_of_iso_1}) becomes a commutative diagram of isomorphisms of mixed Hodge structures. 

\subsection{DB cohomology of Deligne-Mumford stacks: via atlases}
As before, we let $\scX$ be a Deligne-Mumford stack of finite type over $\CC$.  The first result is  

\begin{lemma}
With the same notations as in Lemma \ref{comparison_between_two_atlases}, there is an isomorphism
	\begin{equation}
	    \rH^q_{\DB}((X_1)_\bullet, A(p))\cong \rH^q_{\DB}((X_2)_\bullet, A(p))
	\end{equation}
	for any $q$.
\end{lemma}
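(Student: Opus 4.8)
The plan is to mimic exactly the structure of the proof of Lemma \ref{comparison_between_two_atlases}, replacing ordinary Betti cohomology $\rH^q(-,\ZZ)$ by Deligne-Beilinson cohomology $\rH^q_{\DB}(-, A(p))$ throughout. Given the two \'etale atlases $\pi_1:X_1\to\scX$ and $\pi_2:X_2\to\scX$, I would form the $(2\text{-})$fiber product $X_3=X_1\times_\scX X_2$, which again admits \'etale surjective projections $\widetilde{\pi_1}:X_3\to X_2$ and $\widetilde{\pi_2}:X_3\to X_1$, and pass to the associated $0$-coskeletons to obtain a commutative square of simplicial schemes
\begin{equation*}
\xymatrix
{
(X_3)_{\bullet}\ar[r]^{\widetilde{e_2}}\ar[d]_{\widetilde{e_1}} &(X_1)_{\bullet}\ar[d]^{e_1}\\
(X_2)_{\bullet}\ar[r]_{e_2} & \scX
}
\end{equation*}
in which $\widetilde{e_1}$ and $\widetilde{e_2}$ are \'etale hypercoverings, hence of cohomological descent. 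The goal is then to produce a canonical isomorphism $\rH^q_{\DB}((X_1)_\bullet, A(p))\cong \rH^q_{\DB}((X_2)_\bullet, A(p))$ by factoring both sides through $\rH^q_{\DB}((X_3)_\bullet, A(p))$.

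The crux is therefore to establish that an \'etale hypercovering of separated simplicial schemes induces an isomorphism on DB cohomology. First I would invoke the functoriality of DB cohomology for morphisms of separated simplicial schemes, which is built into the construction via the diagram \eqref{resolution_of_coh_desc} of cohomological descent and good compactifications; this furnishes the pullback maps $(\widetilde{e_1})^*$ and $(\widetilde{e_2})^*$ on DB cohomology. To see these are isomorphisms, the natural device is the long exact sequence \eqref{long_exact_sequence_involving_DB_cohomology} relating DB cohomology, Betti cohomology with $A(p)$-coefficients, and $\rH^q(-,\CC)/F^p\rH^q(-,\CC)$, together with its contravariant functoriality. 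Applying this sequence to both $(X_i)_\bullet$ and $(X_3)_\bullet$ and comparing them via a morphism of long exact sequences, I would use that a morphism of cohomological descent induces isomorphisms on Betti cohomology $\rH^q(-,A(p))$ (this is precisely the defining property of cohomological descent), and that the transported mixed Hodge structures agree by Lemma \ref{comparison_between_two_atlases}, so that the induced maps on $\rH^q(-,\CC)/F^p\rH^q(-,\CC)$ are also isomorphisms. The five lemma then forces the middle maps on DB cohomology to be isomorphisms.

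The main obstacle I anticipate is verifying the functoriality and the compatibility of the long exact sequence with the hypercovering maps at the level of the chosen good compactifications, since the DB cohomology of a simplicial scheme depends a priori on the auxiliary choice of diagram \eqref{resolution_of_coh_desc}, whereas the statement asserts independence of all such choices up to canonical isomorphism. Concretely, the pullback $(\widetilde{e_i})^*$ must be realized by choosing compatible compactifications of $(X_3)_\bullet$ dominating those of $(X_i)_\bullet$, which requires an application of the standard existence results for simultaneous good compactifications in the simplicial setting (\cite{De74}, \cite[\S 5]{EV88}). Once the functoriality is pinned down on the nose and the long exact sequence is known to be a natural transformation, the isomorphism follows formally from the five lemma; I expect the routine but somewhat delicate part to be organizing these compactification choices so that all squares genuinely commute rather than merely commute up to homotopy.
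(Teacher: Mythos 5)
Your proposal is correct and follows essentially the same route as the paper: pass to the fiber product $X_3=X_1\times_{\scX}X_2$, compare the long exact sequences \eqref{long_exact_sequence_involving_DB_cohomology} along the hypercovering maps $\widetilde{e_1}$, $\widetilde{e_2}$, and conclude by the five lemma that $(\widetilde{e_i})^*_{\DB}$ are isomorphisms, composing $((\widetilde{e_1})^*_{\DB})^{-1}\circ(\widetilde{e_2})^*_{\DB}$. The compactification-compatibility issue you flag is exactly what the paper delegates to the contravariant functoriality of the long exact sequence and to \cite[5.2]{EV88}, so your extra care there is consistent with, not divergent from, the paper's argument.
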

\begin{proof}
The proof is similar to that of  \cite[5.2]{EV88}. Keep notations the same as in the proof of Lemma \ref{comparison_between_two_atlases}. The morphism $\widetilde{e_2}$ induces a morphism of two long exact sequences (\ref{long_exact_sequence_involving_DB_cohomology}):
    \begin{equation*}
		\xymatrix
		{
			\cdots\ar[r]& \rH^q_{\DB}((X_1)_\bullet, A(p))\ar[r]\ar[d]^{(\widetilde{e_2})^*_{\DB}}&\rH^q((X_1)_\bullet, A(p))\ar[r]\ar[d]^{(\widetilde{e_2})^*}&\rH^q((X_1)_\bullet, \CC)/F^p\rH^q((X_1)_\bullet, \CC)\ar[r]\ar[d]^{\overline{(\widetilde{e_2})^*}}&\cdots\\
			\cdots\ar[r]& \rH^q_{\DB}((X_3)_\bullet, A(p))\ar[r]&\rH^q((X_3)_\bullet, A(p))\ar[r]&\rH^q((X_3)_\bullet, \CC)/F^p\rH^q((X_3)_\bullet, \CC)\ar[r]&\cdots
		}
	\end{equation*}
	
Since $(\widetilde{e_2})^*$ is an isomorphism of mixed Hodge structures, $\overline{(\widetilde{e_2})^*}$ is an isomorphism, hence $(\widetilde{e_2})^*_{\DB}$ is also an isomorphism. Similarly, the induced map $$(\widetilde{e_1})^*_{\DB}: \rH^q_{\DB}((X_2)_\bullet, A(p))\to \rH^q_{\DB}((X_3)_\bullet, A(p))$$ is an isomorphism as well. We conclude that $((\widetilde{e_1})^*_{\DB})^{-1} \circ (\widetilde{e_2})^*_{\DB}$ gives the desired isomorphism.
\end{proof}
\begin{remark}\label{transition_iso}
Suppose that there is a morphism $f:X_1\to X_2$ such that the following diagram is commutative:
    \begin{equation*}
	\xymatrix
	{
		 &X_1\ar[d]^{\pi_1}\ar[dl]_{f}\\
		X_2\ar[r]_{\pi_2} & \scX
	}
	\end{equation*}
Let $f_{\bullet}:(X_1)_\bullet\to (X_2)_\bullet$ be the morphism induced by $f$. Then the same argument shows that $$((f_{\bullet})^*_{\DB})^{-1}: \rH^q_{\DB}((X_1)_\bullet, A(p))\to \rH^q_{\DB}((X_2)_\bullet, A(p))$$ is an isomorphism, denoted by $\eta_{f}$.
\end{remark}

\begin{definition}\label{def_of_DB}
	Let $\scX$ be a Deligne-Mumford stack of finite type over $\CC$. Fix any \'etale atlas $\pi:X\to \scX$ (with $X$ separated), we define the Deligne-Beilinson cohomology of $\scX$ to be
	\begin{equation}
	    \rH^q_{\DB}(\scX, A(p)):=\rH^q_{\DB}(X_\bullet, A(p))
	\end{equation}
	where $X_\bullet$ is the $0$-coskeleton of $\pi$. 
\end{definition}

Moreover, the construction above is functorial. Let us fix for any $\scX$ an arbitrary  \'etale atlas $\pi:X\to \scX$ (with $X$ separated). Let $f:\scX'\to \scX$ be a morphism. Take the \'etale atlas $\pi':X'\to \scX'$ of $\scX'$ we have fixed, and let $\widetilde{\pi}: X'_{(X)}\to X'$ be the pull-back of $\pi$ along $f\circ \pi'$, which fits into the following commutative diagram 
\begin{equation*}
    \xymatrix
		{
			X'_{(X)} \ar[rr]^{\widetilde{f}} \ar[d]_{\widetilde{\pi}} \ar[dr] & & X \ar[d]^{\pi}\\
			X'\ar[r]_{\pi'} & \scX'\ar[r]_{f} & \scX.
		}
\end{equation*}
Then we have 

\begin{proposition}
Set
\begin{equation}
    f^*_{\DB}:=\eta_{\widetilde{\pi}} \circ (\widetilde{f_{\bullet}})^*_{\DB}: \rH^q_{\DB}(\scX, A(p))\to \rH^q_{\DB}(\scX', A(p))
\end{equation}
where $\eta_{\widetilde{\pi}}$ is defined in Remark \ref{transition_iso} and $\widetilde{f_{\bullet}}$ is the morphism of simplicial schemes induced by $\widetilde{f}$. Then  
the assignment $\scX\to \rH^q_{\DB}(\scX, A(p))$, $f\to f^*_{\DB}$ defined above is contravariantly functorial for any morphism $\scX'\to\scX$.
\end{proposition}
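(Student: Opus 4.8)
The plan is to verify the two functoriality axioms: that $(\mathrm{id}_{\scX})^*_{\DB}=\mathrm{id}$, and that $(f\circ g)^*_{\DB}=g^*_{\DB}\circ f^*_{\DB}$ for composable $g\colon\scX''\to\scX'$ and $f\colon\scX'\to\scX$. Throughout I read $f^*_{\DB}=\eta_{\widetilde{\pi}}\circ(\widetilde{f_\bullet})^*_{\DB}$ as the roof
\[
X_\bullet \xleftarrow{\ \widetilde{f_\bullet}\ } (X'_{(X)})_\bullet \xrightarrow{\ \widetilde{\pi}_\bullet\ } X'_\bullet ,
\]
whose right leg is a morphism of $0$-coskeleta of \'etale atlases of $\scX'$, so that $\eta_{\widetilde\pi}=\bigl((\widetilde{\pi}_\bullet)^*_{\DB}\bigr)^{-1}$ is legitimate by Remark~\ref{transition_iso}. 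The only external inputs I will use are the contravariant functoriality of $\rH^\bullet_{\DB}(-,A(p))$ for morphisms of separated simplicial schemes (the part of \cite[\S5]{EV88} already invoked for the sequence \eqref{long_exact_sequence_involving_DB_cohomology}) and the functoriality of $\cosk_0$ for a morphism of atlases lying over a morphism of base stacks. Note that every auxiliary scheme introduced below is \'etale over one of the fixed separated atlases, hence is itself separated, so the machinery of \cite{EV88} applies to its $0$-coskeleton.

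I would first isolate two formal compatibilities. (i) \emph{Multiplicativity of $\eta$}: for composable morphisms $a,b$ of \'etale atlases over a fixed stack, each $\eta$ is an isomorphism by Remark~\ref{transition_iso}, and $\eta_{b\circ a}=\eta_b\circ\eta_a$, $\;\eta_a\circ(a_\bullet)^*_{\DB}=\mathrm{id}$; both follow at once from $\eta=\bigl((-)^*_{\DB}\bigr)^{-1}$ together with the contravariance $(b\circ a)^*_{\DB}=a^*_{\DB}\circ b^*_{\DB}$. (ii) \emph{Base change}: given the cartesian square with vertices $W$, $X''_{(X')}$, $X'_{(X)}$, $X'$ whose vertical arrows cover $g$ and whose horizontal arrows $\widetilde{\pi}\colon X'_{(X)}\to X'$ and its pullback $\mathrm{pr}_1\colon W\to X''_{(X')}$ are morphisms of atlases (hence induce isomorphisms $(\widetilde\pi_\bullet)^*_{\DB}$, $(\mathrm{pr}_{1,\bullet})^*_{\DB}$), applying the simplicial DB functor to the commuting square of $0$-coskeleta and inverting the two horizontal legs yields $(\widetilde{g}_\bullet)^*_{\DB}\circ\eta_{\widetilde\pi}=\eta_{\mathrm{pr}_1}\circ(\mathrm{pr}_{2,\bullet})^*_{\DB}$.

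For the identity, with $f=\mathrm{id}_\scX$ and the same fixed atlas one gets $X'_{(X)}=X\times_{\scX}X$ with $\widetilde\pi,\widetilde f$ the two projections; the diagonal $\Delta\colon X\to X\times_\scX X$ is a morphism of atlases with $\widetilde\pi\circ\Delta=\widetilde f\circ\Delta=\mathrm{id}$, and (i) collapses $\eta_{\widetilde\pi}\circ(\widetilde{f_\bullet})^*_{\DB}$ to $\mathrm{id}$. For composition I would introduce the triple fibre product $W=X''\times_{\scX'}X'\times_{\scX}X$. Its two factorizations $W\cong X''_{(X')}\times_{X'}X'_{(X)}$ and $W\cong X''_{(X)}\times_{X''}X''_{(X')}$ exhibit $W$ simultaneously as the base-change object of (ii) and, via the projection $q\colon W\to X''_{(X)}$ (a pullback of the \'etale $\widetilde{\pi'}$), as an atlas over $X''_{(X)}$. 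Expanding $g^*_{\DB}\circ f^*_{\DB}=\eta_{\widetilde{\pi'}}\circ(\widetilde{g}_\bullet)^*_{\DB}\circ\eta_{\widetilde\pi}\circ(\widetilde{f_\bullet})^*_{\DB}$, I apply (ii) to the middle factor $(\widetilde{g}_\bullet)^*_{\DB}\circ\eta_{\widetilde\pi}=\eta_{\mathrm{pr}_1}\circ(\mathrm{pr}_{2,\bullet})^*_{\DB}$, then use (i) and the relation $\widetilde{\pi'}\circ\mathrm{pr}_1=\widetilde{\pi''}\circ q$ on $W$ to rewrite $\eta_{\widetilde{\pi'}}\circ\eta_{\mathrm{pr}_1}=\eta_{\widetilde{\pi''}}\circ\eta_q$, and contravariance with $\widetilde{f}\circ\mathrm{pr}_2=\widetilde{fg}\circ q$ on $W$ to rewrite $(\mathrm{pr}_{2,\bullet})^*_{\DB}\circ(\widetilde{f_\bullet})^*_{\DB}=(q_\bullet)^*_{\DB}\circ(\widetilde{fg}_\bullet)^*_{\DB}$. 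The central $\eta_q\circ(q_\bullet)^*_{\DB}$ then cancels by (i), leaving exactly $\eta_{\widetilde{\pi''}}\circ(\widetilde{fg}_\bullet)^*_{\DB}=(f\circ g)^*_{\DB}$.

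The step where I expect the only genuine obstacle is the rigorous form of (i) and (ii): one must know that the pullbacks $(\cdot)^*_{\DB}$ on DB cohomology of separated simplicial schemes are well defined independently of the auxiliary compactification/hypercovering chosen in \eqref{resolution_of_coh_desc} and compose strictly, so that the commuting schematic squares really do induce commuting squares of DB groups and so that each $\eta$ is an isomorphism. This is precisely the cohomological-descent content of \cite[\S5]{EV88} (the same argument underlying Remark~\ref{transition_iso}); once it is granted, all remaining verifications are the purely formal manipulations of $\eta$'s and pullbacks indicated above.
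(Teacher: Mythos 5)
Your argument is correct and is essentially the paper's own proof: the paper verifies functoriality via the very same triple fiber product (its $X''_{(X)}\times_{X''}X''_{(X')}$ is your $W$) and shows that both $(f\circ f')^*_{\DB}$ and $(f')^*_{\DB}\circ f^*_{\DB}$ coincide with the roof $\eta_{g}\circ (h_{\bullet})^*_{\DB}$ through it, so your compatibilities (i) and (ii) merely spell out the $\eta$-multiplicativity and base-change cancellations that the paper compresses into ``it follows that,'' with your extra verification of the identity axiom via the diagonal being a harmless addition. The one slip is your justification of separatedness --- being \'etale over a separated scheme does not imply separated (the line with doubled origin is \'etale over $\AA^1$) --- but the needed conclusion holds anyway, since the diagonal of a Deligne--Mumford stack is separated, so fiber products such as $X'_{(X)}=X'\times_{\scX}X$ and $W$ are separated schemes.
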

\begin{proof}
Suppose that we are also given $f': \scX''\to \scX'$. Let $\pi'':X''\to \scX''$ be the \'etale atlas of $\scX''$ we have fixed. Consider the following diagram:
\begin{equation*}
    \xymatrix
		{
			& X''_{(X)}\ar[rr] \ar[dd]& &\scX''_{(X)} \ar[rr]\ar[dd] & & \scX'_{(X)}\ar[r]\ar[dd]& X \ar[dd]^{\pi}\\
			X''_{(X)}\times_{X''}X''_{(X')}\ar[ur]\ar[rrrr]\ar[dd]& & & & X'_{(X)}\ar[ur]\ar[dd]& &\\
			& X''\ar[rr]^{\pi''}& &\scX''\ar[rr]^{f'} & & \scX'\ar[r]^{f} & \scX\\
			X''_{(X')}\ar[ur]\ar[rr]& & \scX''_{(X')}\ar[ur]\ar[rr] & & X'\ar[ur]_{\pi'} & &
		}
\end{equation*}
where all squares are pull-backs. Denote the arrow $X''_{(X)}\times_{X''}X''_{(X')}\to X''$ by $g$ and the arrow $X''_{(X)}\times_{X''}X''_{(X')}\to X$ by $h$.
It follows that $(f\circ f')^*_{\DB}$ and $(f')^*_{\DB}\circ (f)^*_{\DB}$ are both naturally isomorphic to $\eta_{g}\circ (h_{\bullet})^*_{\DB}$.

\end{proof}

\begin{proposition}
Let $\scX$ be a Deligne-Mumford stack of finite type over $\CC$. Then there is an exact sequence
	\begin{equation}\label{long_exact_seq_for_stacks}
		\xymatrix
		{
			\cdots\ar[r]&\rH^q_{\DB}(\scX, A(p))\ar[r]&\rH^q(\scX, A(p))\ar[r]&\rH^q(\scX, \CC)/F^p\rH^q(\scX, \CC)\ar[r]&\cdots
		}
	\end{equation}
\end{proposition}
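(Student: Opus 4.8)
The plan is to deduce the statement from the long exact sequence (\ref{long_exact_sequence_involving_DB_cohomology}) already available for separated simplicial schemes, by transporting each of its three terms to $\scX$ through a fixed atlas. First I would fix an \'etale atlas $\pi:X\to\scX$ with $X$ separated and form the $0$-coskeleton $X_\bullet=\cosk_0(\pi)$, which is then a separated simplicial scheme. By Definition \ref{def_of_DB} the leftmost term is literally $\rH^q_{\DB}(\scX,A(p))=\rH^q_{\DB}(X_\bullet,A(p))$, and applying (\ref{long_exact_sequence_involving_DB_cohomology}) to $X_\bullet$ gives the exact sequence
\[
\cdots\to\rH^q_{\DB}(X_\bullet,A(p))\to\rH^q(X_\bullet,A(p))\to\rH^q(X_\bullet,\CC)/F^p\rH^q(X_\bullet,\CC)\to\cdots.
\]
The whole task is thus to identify the middle and right terms with $\rH^q(\scX,A(p))$ and $\rH^q(\scX,\CC)/F^p\rH^q(\scX,\CC)$ in a way compatible with the maps.

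For the middle term I would invoke the cohomological descent isomorphism (\ref{compute_cohomology_of_analytic_sheaves}) applied to the constant analytic sheaves $A(p)$ and $\CC$ on $\scX_{\an}$: the augmentation $e:X_\bullet\to\scX$ induces isomorphisms $\rH^q(\scX,A(p))\xrightarrow{\sim}\rH^q(X_\bullet,A(p))$ and $\rH^q(\scX,\CC)\xrightarrow{\sim}\rH^q(X_\bullet,\CC)$. For the right term I would use that, by construction (Lemma \ref{comparison_between_two_atlases} together with the paragraph following it), the mixed Hodge structure on $\rH^q(\scX,\ZZ)$ is exactly the transport along $e^*$ of the one on $\rH^q(X_\bullet,\ZZ)$; hence $e^*$ carries $F^p\rH^q(\scX,\CC)$ isomorphically onto $F^p\rH^q(X_\bullet,\CC)$ and induces an isomorphism on the quotients $\rH^q(\scX,\CC)/F^p\rH^q(\scX,\CC)\xrightarrow{\sim}\rH^q(X_\bullet,\CC)/F^p\rH^q(X_\bullet,\CC)$. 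Transporting the sequence for $X_\bullet$ through these three identifications yields the asserted sequence (\ref{long_exact_seq_for_stacks}) and simultaneously \emph{defines} the two maps occurring in it.

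The only genuine point to verify is that the resulting sequence, and in particular its two maps, are independent of the chosen atlas, so that (\ref{long_exact_seq_for_stacks}) is canonical. Here I would compare two atlases $X_1,X_2$ through the $2$-fibre product $X_3=X_1\times_{\scX}X_2$ exactly as in the proof of Lemma \ref{comparison_between_two_atlases}: the induced $(\widetilde{e_i})^*$ are isomorphisms both of mixed Hodge structures and of Betti cohomology, and by the morphism-of-long-exact-sequences argument used to produce $\eta_f$ in Remark \ref{transition_iso} they are compatible with the $\DB$-terms as well. Consequently the three vertical comparison maps assemble into an isomorphism between the sequences built from $X_1$ and $X_2$. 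I expect this compatibility bookkeeping---checking that the descent isomorphism for Betti cohomology, the Hodge-filtration transport, and the $\DB$-comparison all fit into a single morphism of long exact sequences---to be the main (though essentially routine) obstacle; the existence and exactness of (\ref{long_exact_seq_for_stacks}) itself are immediate once the terms are identified.
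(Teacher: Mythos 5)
Your proposal is correct and follows essentially the same route as the paper: fix a separated \'etale atlas, apply the long exact sequence \eqref{long_exact_sequence_involving_DB_cohomology} to the $0$-coskeleton $X_\bullet$, and identify the Betti and Hodge-quotient terms via the descent isomorphism \eqref{compute_cohomology_of_analytic_sheaves} and the transported mixed Hodge structure from Lemma \ref{comparison_between_two_atlases}. The paper's proof is just a one-line citation of these ingredients; your additional verification of atlas-independence is the same bookkeeping the paper has already delegated to Lemma \ref{comparison_between_two_atlases} and Remark \ref{transition_iso}, so nothing is missing.
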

\begin{proof}
Take $\pi: X\to \scX$ as in Definition \ref{def_of_DB}, then the assertion follows from the construction of $\rH^q_{\DB}(\scX, A(p))$ and the exact sequence \eqref{long_exact_sequence_involving_DB_cohomology} for the 0-coskeleton $X_{\bullet}$
\begin{equation*}
		\xymatrix
		{
			\cdots\ar[r]&\rH^q_{\DB}(X_{\bullet}, A(p))\ar[r]&\rH^q(X_{\bullet}, A(p))\ar[r]&\rH^q(X_{\bullet}, \CC)/F^p\rH^q(X_{\bullet}, \CC)\ar[r]&\cdots
		}
	\end{equation*}
\end{proof}

\subsection{DB cohomology for smooth Deligne-Mumford stacks: via DB complexes}
In this subsection we give an alternative definition of DB cohomology for \textit{smooth} Deligne-Mumford stacks, which will be used to define the DB-cycle class maps.

So let us assume now that $\scX$ is \textit{smooth}. In particular, for every \'etale morphism $U\to \scX$, $U$ must be smooth. Consequently if $X\to \scX$ is an \'etale atlas with $X$ separated (and smooth), its 0-coskeleton is a smooth separated simplicial scheme.

Let $Sm/\CC$ be the category of smooth separated schemes.  In \cite{Bei84}, Beilinson showed that there is a complex of (big) sheaves $\QQ(p)_{\DB , \Zar}$ on $(Sm/\CC)_{\Zar}$ which computes DB cohomology. In particular, for all smooth separated simplicial scheme $X_{\bullet}$, we have an isomorphism
\begin{equation*}
    \rH^q_{\DB}(X_{\bullet}, \QQ(p)) \cong \rH^q(X_{\bullet, \Zar}, \QQ(p)_{\DB, \Zar}|_{X_{\bullet, \Zar}}),
\end{equation*}
(See also \cite[\S 5.5]{EV88}).

Let $\QQ(p)_{\DB, \et}$ be the sheafification of $\QQ(p)_{\DB , Zar}$ in the \'etale topology. It has been proved that the complex of \'etale sheaves $\QQ(p)_{\DB, \et}$ also computes the DB cohomology. In particular, for all smooth separated simplicial schemes $X_{\bullet}$, we have
\begin{equation}\label{etale_descent_for_DB}
\rH^q_{\DB}(X_{\bullet}, \QQ(p)) \cong \rH^q(X_{\bullet, \et}, \QQ(p)_{\DB, \et}|_{X_{\bullet, \et}}),
\end{equation}
({\it cf}.~ \cite[Thm. 2.5 (i)]{Koh19}). This holds even in $\ZZ$-coefficients.

Now since $\scX$ is smooth, we can restrict $\QQ(p)_{\DB, \et}$ to $\scX_{\et}$ (see Remark \ref{separated_sub_site}, and note that the objects of ${\rm \acute{E}}{\rm t}^{\rm sep}(\scX)$ are all smooth).

\begin{proposition}
Let $\scX$ be a \textit{smooth} Deligne-Mumford stack. Then there is an isomorphism
    \begin{equation}
        \rH^q_{\DB}(\scX, \QQ(p)) \cong \rH^q(\scX_{\et}, \QQ(p)_{\DB, \et}|_{\scX_{\et}})
    \end{equation}
\end{proposition}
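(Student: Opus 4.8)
The plan is to compute both sides through a single étale atlas $\pi:X\to\scX$ with $X$ separated (hence smooth, since $\scX$ is), and to reconcile the two computations by combining the two descent statements already at our disposal: the étale-sheaf description \eqref{etale_descent_for_DB} of DB cohomology for smooth separated simplicial schemes, and the cohomological descent \eqref{compute_cohomology_of_etale_sheaves} for the augmentation $e:X_\bullet\to\scX$ attached to $\pi$. The upshot is that both sides will be identified with the hypercohomology of one and the same coefficient complex on $X_{\bullet,\et}$.

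First I would unwind the left-hand side. By Definition \ref{def_of_DB}, $\rH^q_{\DB}(\scX,\QQ(p))=\rH^q_{\DB}(X_\bullet,\QQ(p))$, where $X_\bullet=\cosk_0(\pi)$ is a smooth separated simplicial scheme, so applying \eqref{etale_descent_for_DB} to $X_\bullet$ gives
\[
\rH^q_{\DB}(\scX,\QQ(p))\cong \rH^q\bigl(X_{\bullet,\et},\,\QQ(p)_{\DB,\et}|_{X_{\bullet,\et}}\bigr).
\]
For the right-hand side I would invoke cohomological descent. Since $\scX$ is smooth, every object of $\Et^{\rm sep}(\scX)$ is a smooth separated scheme, so the complex of big étale sheaves $\QQ(p)_{\DB,\et}$ on $(Sm/\CC)_{\et}$ restricts to a complex $\cG:=\QQ(p)_{\DB,\et}|_{\scX_\et}$ on $\scX_\et$. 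Taking $\cF=\cG$ in \eqref{compute_cohomology_of_etale_sheaves} yields
\[
\rH^q(\scX_\et,\cG)\cong \rH^q\bigl(X_{\bullet,\et},\, e^{-1}_\et\cG\bigr).
\]

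The crux of the argument — and the step I expect to require the most care — is matching the two coefficient complexes on $X_{\bullet,\et}$, namely
\[
\QQ(p)_{\DB,\et}|_{X_{\bullet,\et}}\cong e^{-1}_\et\bigl(\QQ(p)_{\DB,\et}|_{\scX_\et}\bigr).
\]
Here I would use the explicit description of $e^{-1}_\et$ recalled before \eqref{compute_cohomology_of_etale_sheaves}: its degree-$n$ component is the restriction of $\cG$ to $X_{n,\et}$, with transition maps induced by the simplicial structure of $X_\bullet$. It therefore suffices to check levelwise that restricting $\cG=\QQ(p)_{\DB,\et}|_{\scX_\et}$ from the small site $\scX_\et$ down to $X_{n,\et}$ agrees with restricting the big sheaf $\QQ(p)_{\DB,\et}$ directly to $X_{n,\et}$, compatibly with the face and degeneracy maps. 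This is precisely the compatibility of restriction along the structure morphism $X_n\to\scX$ with the passage from the big site to the small sites: for any étale $V\to X_n$ (with $V\in Sm/\CC$) both complexes have sections $\QQ(p)_{\DB,\et}(V)$, and these identifications are functorial in $V$ and hence compatible with the simplicial transition morphisms, which are themselves morphisms in $Sm/\CC$. Granting this levelwise identification of complexes on $X_{\bullet,\et}$, the three displayed isomorphisms chain together to give the desired
\[
\rH^q_{\DB}(\scX,\QQ(p))\cong \rH^q\bigl(\scX_\et,\,\QQ(p)_{\DB,\et}|_{\scX_\et}\bigr),
\]
which also shows a posteriori that the left-hand side is independent of the chosen atlas.
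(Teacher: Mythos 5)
Your proof is correct and takes essentially the same route as the paper's, which likewise chains Definition \ref{def_of_DB}, the \'etale descent property \eqref{etale_descent_for_DB} for the smooth separated simplicial scheme $X_\bullet$, and cohomological descent \eqref{compute_cohomology_of_etale_sheaves} through a single atlas. Your levelwise verification that $e^{-1}_{\et}\bigl(\QQ(p)_{\DB,\et}|_{\scX_{\et}}\bigr)$ agrees with $\QQ(p)_{\DB,\et}|_{X_{\bullet,\et}}$ simply makes explicit a compatibility the paper's one-line proof leaves implicit.
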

\begin{proof}
    Take an \'etale atlas $\pi:X\to \scX$. By definition $\rH^q_{\DB}(\scX, \QQ(p))=\rH^q_{\DB}(X_\bullet, \QQ(p))$, where $X_\bullet$ is the 0-coskeleton of $\pi$. By (\ref{etale_descent_for_DB}),  it is isomorphic to $\rH^q(X_{\bullet, \et}, \QQ(p)_{\DB, \et}|_{X_{\bullet, \et}})$, and by (\ref{compute_cohomology_of_etale_sheaves}) also isomorphic to $\rH^q(\scX_{\et}, \QQ(p)_{\DB, \et}|_{\scX_{\et}})$.
\end{proof}

\subsection{DB-cycle class maps for smooth Deligne-Mumford stacks}
Now we proceed to define the DB-cycle class map from the Chow group of a smooth stack to the DB cohomology group. 

Let $\scX$ be a \textit{smooth} Deligne-Mumford stack. For each integer $p$, let $\CH^p(\scX)_\QQ$ be the $p$-th rational Chow group of $\scX$ defined in  \cite{Gil84, Vis89}. Let $\QQ(p)$ denote the motivic complex of weight $p$ with rational coefficients (see \cite[Definition 3.1]{MVW06}), which is a complex of sheaves on $(Sm/\CC)_{\et}$. Then we have

\begin{proposition} ({\it cf}.~\cite[Theorem 2 (i)]{JoshuaI})
    There is a canonical isomorphism $$\CH^p(\scX)_\QQ\cong \rH^{2p}(\scX_{\et}, \QQ(p)|_{\scX_{\et}}).$$
\end{proposition}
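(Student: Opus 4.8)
We must establish a canonical isomorphism
$$\CH^p(\scX)_\QQ \cong \rH^{2p}(\scX_{\et}, \QQ(p)|_{\scX_{\et}})$$
for a smooth Deligne-Mumford stack $\scX$ of finite type over $\CC$, where $\QQ(p)$ is the motivic complex on the big smooth-étale site. This is the motivic analogue of the Deligne-Beilinson computation just carried out, and indeed the citation to Joshua suggests the result is known for schemes (and Voevodsky's theory) and needs to be transported to stacks via étale descent plus a descent argument for Chow groups.

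Let me think about how the proof should go.

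**The strategy.** The core fact from Voevodsky-style motivic cohomology is that for a smooth separated scheme $U$ over $\CC$, one has $\CH^p(U)_\QQ \cong \rH^{2p}(U_{\et}, \QQ(p)|_{U_{\et}})$ — the étale motivic cohomology in the top degree $2p$ recovers the rational Chow group (this is exactly the content of Joshua's Theorem 2(i), which is the cited reference, building on the comparison of higher Chow groups / motivic cohomology with the Zariski theory and the fact that rationally étale and Zariski motivic cohomology agree). The task is purely to descend this along an étale atlas. So I expect the proof to be short and formally parallel to the DB case already treated: fix an étale atlas $\pi: X \to \scX$ with $X$ separated and smooth, form the $0$-coskeleton $X_\bullet$, and use two independent descent statements — one for the right-hand side (étale cohomology of the motivic complex) and one for the left-hand side (Chow groups).

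**The two descent inputs.** First I would handle the cohomology side. Since $\QQ(p)$ is a complex of sheaves on $(Sm/\CC)_{\et}$ and $\scX$ is smooth, its restriction $\QQ(p)|_{\scX_{\et}}$ makes sense, and by the cohomological-descent isomorphism (\ref{compute_cohomology_of_etale_sheaves}) applied to the atlas $\pi$ one obtains
\begin{equation*}
\rH^{2p}(\scX_{\et}, \QQ(p)|_{\scX_{\et}}) \cong \rH^{2p}(X_{\bullet, \et}, \QQ(p)|_{X_{\bullet, \et}}),
\end{equation*}
exactly as in the proof of the preceding proposition for the DB complex. The right-hand group is by definition the étale motivic cohomology of the simplicial scheme $X_\bullet$; applying the scheme-level result degreewise and assembling via the simplicial (descent) spectral sequence identifies it with $\CH^p(X_\bullet)_\QQ$, the Chow group of the simplicial scheme. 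Second, I would invoke the descent property of rational Chow groups of Deligne-Mumford stacks: by the Gillet-Vistoli theory (\cite{Gil84, Vis89}), $\CH^p(\scX)_\QQ$ is computed by the same atlas, i.e. it sits in the equalizer of $\CH^p(X_0)_\QQ \rightrightarrows \CH^p(X_1)_\QQ$ coming from the two face maps of $X_\bullet$ (this is precisely the statement that Chow groups of a DM stack satisfy étale descent rationally, which is why one passes to $\QQ$-coefficients). Matching this equalizer presentation with the degenerate part of the descent spectral sequence for $\CH^p(X_\bullet)_\QQ$ gives $\CH^p(\scX)_\QQ \cong \CH^p(X_\bullet)_\QQ$, and chaining the isomorphisms yields the claim. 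Finally, independence of the chosen atlas follows by the same two-atlas comparison used in Lemma \ref{comparison_between_two_atlases}, so the isomorphism is canonical.

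**The main obstacle.** The formal descent bookkeeping is routine; the genuine content I would need to pin down is the compatibility between the two descent mechanisms — that the equalizer/coequalizer presentation of $\CH^p(\scX)_\QQ$ arising from Gillet-Vistoli's intersection theory on stacks agrees, under the scheme-level comparison isomorphism, with the edge map of the étale-cohomological descent spectral sequence for $\QQ(p)$. Concretely, the subtle point is that the higher cohomology of $X_\bullet$ in degrees contributing off the diagonal could a priori obstruct the collapse to the naive equalizer; I would argue that rationally the relevant higher motivic cohomology sheaves and the simplicial $E_1$-differentials behave so that only the equalizer term survives in total degree $2p$ (the weight/codimension reason that $\CH^p$ is the \emph{top} nonvanishing piece). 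Verifying this compatibility of cycle-class and pullback maps across the two theories — rather than any single descent statement in isolation — is where the real work lies, and it is the part I would expect to require the most care; everything else is a transcription of the argument already given for the Deligne-Beilinson complex.
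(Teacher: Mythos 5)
First, a point of comparison: the paper does not prove this proposition at all --- it is imported verbatim from Joshua \cite[Theorem 2 (i)]{JoshuaI}, which is exactly the statement that rational higher Chow groups of a Deligne--Mumford stack agree with \'etale hypercohomology of the motivic complex. So there is no paper-internal proof to match; what you have written is an attempt to reprove the cited theorem, and it should be judged as such.

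As a proof, your sketch has a genuine gap, and it sits precisely where you park it under ``the main obstacle.'' The descent isomorphism $\rH^{2p}(\scX_{\et},\QQ(p)|_{\scX_{\et}})\cong \rH^{2p}(X_{\bullet,\et},\QQ(p)|_{X_{\bullet,\et}})$ is indeed a transcription of \eqref{compute_cohomology_of_etale_sheaves} (modulo a technical point: that isomorphism is stated for $D^+$, while $\QQ(p)$ is unbounded below as a complex, so you must first truncate, using that its cohomology sheaves are bounded below on each smooth scheme). But from there everything rests on two unproved claims. (a) That the descent spectral sequence $E_1^{s,t}=\rH^t_{\et}(X_s,\QQ(p))\Rightarrow \rH^{s+t}(X_{\bullet,\et},\QQ(p))$ collapses in total degree $2p$ onto $E_2^{0,2p}$: the vanishing $\rH^t_{\et}(X_s,\QQ(p))=0$ for $t>2p$ (which itself requires the rational \'etale-vs-Nisnevich comparison before higher Chow vanishing applies) only guarantees that $E_\infty^{0,2p}$ is the top graded piece; you still need $E_\infty^{s,2p-s}=0$ for all $s\geq 1$ \emph{and} the vanishing of the differentials $d_r\colon E_r^{0,2p}\to E_r^{r,2p-r+1}$, whose targets are a priori nonzero. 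Your appeal to ``the weight/codimension reason that $\CH^p$ is the top nonvanishing piece'' addresses neither: the dangerous terms live in degrees $\leq 2p$. The actual mechanism, and the real content of Joshua's theorem, is a transfer/averaging argument available rationally for a proper \'etale groupoid, which kills $E_2^{s,t}$ for $s>0$ (the bar-complex cohomology of the groupoid with $\QQ$-coefficients); you assert the conclusion without this input. (b) That $\CH^p(\scX)_\QQ$ is the equalizer of $\CH^p(X_0)_\QQ\rightrightarrows \CH^p(X_1)_\QQ$: this is not the literal statement of \cite{Gil84,Vis89} and likewise requires a transfer argument ($p_*p^*=\deg$ for the finite-degree \'etale atlas) to establish, after which it must be matched with the edge map of the spectral sequence. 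In short, the descent bookkeeping you call routine is fine, but the two claims you label as ``the real work'' are not corollaries of anything you set up --- they are the theorem being cited, so the proposal as written is a plausible strategy rather than a proof.
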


According to \cite[Thm 5.2 (i)]{Koh19} (and see also \cite[Chapter 19]{MVW06}), there is a map 
$$\QQ(p)\to \QQ(p)_{\DB, \Zar},$$
such that the induced map of the $2p$-th Zariski hypercohomologies, restricting to $X_{\Zar}$ for any smooth separated scheme $X$, can be identified with the classical DB-cycle class map $$\cl_{\DB}:\mathrm{CH}^p(X)_{\QQ}\to \rH^{2p}_{\DB}(X,\QQ(p)).$$ Sheafifying the above map in the \'etale topology (note that $\QQ(p)$ is a complex of sheaves in the \'etale topology), we get a map
\begin{equation}\label{motivic_to_DB}
    \QQ(p)\to \QQ(p)_{\DB, \et}.
\end{equation}

This yields 
\begin{definition}
    Let $\scX$ be a smooth Deligne-Mumford stack of finite type over $\CC$. The DB-cycle class map for $\scX$ with $\QQ$-coefficients
    $$\cl_{\DB}:\mathrm{CH}^p(\scX)_{\QQ}\to \rH^{2p}_{\DB}(\scX,\QQ(p))$$
    is defined by taking the $2p$-th \'etale hypercohomology of the map (\ref{motivic_to_DB})
    restricting to $\scX_{\et}$.
\end{definition}

We have 
\begin{proposition}
If $\scX$ is a smooth scheme, the cycle class map $\cl_{\DB}$
agrees with the Deligne-Beilinson cycle class map defined in \cite{Blo86}. 
\end{proposition}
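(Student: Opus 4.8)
The plan is to reduce the statement to a comparison between the \'etale and the Zariski realizations of the map $\QQ(p)\to\QQ(p)_{\DB}$. Write $X$ for the smooth scheme $\scX$. By \cite[Thm. 5.2(i)]{Koh19} (see also \cite[Ch. 19]{MVW06}), the Deligne--Beilinson cycle class map of \cite{Blo86} is exactly the composite
$$\CH^p(X)_\QQ \xrightarrow{\ \sim\ } \rH^{2p}(X_\Zar,\QQ(p)) \longrightarrow \rH^{2p}(X_\Zar,\QQ(p)_{\DB,\Zar}) \xrightarrow{\ \sim\ } \rH^{2p}_{\DB}(X,\QQ(p)),$$
where the middle arrow is induced by the Zariski map $\QQ(p)\to\QQ(p)_{\DB,\Zar}$ and the outer isomorphisms are the Zariski incarnations of the identifications $\CH^p(X)_\QQ\cong\rH^{2p}(X_\Zar,\QQ(p))$ and $\rH^{2p}_{\DB}(X,\QQ(p))\cong\rH^{2p}(X_\Zar,\QQ(p)_{\DB,\Zar})$. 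By construction, the map $\cl_{\DB}$ of this paper is the analogous composite computed on the \'etale site from the sheafified map $\QQ(p)\to\QQ(p)_{\DB,\et}$. Hence it suffices to build a commutative ladder relating these two composites.

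First I would bring in the change-of-topology morphism of topoi $\epsilon\colon X_\et\to X_\Zar$. Since $\QQ(p)_{\DB,\et}$ is by definition the \'etale sheafification $\epsilon^*\QQ(p)_{\DB,\Zar}$, and the \'etale map $\QQ(p)\to\QQ(p)_{\DB,\et}$ is by construction the $\epsilon^*$ of the Zariski map $\QQ(p)\to\QQ(p)_{\DB,\Zar}$ (using that $\QQ(p)$ is recovered as $\epsilon^*$ of its Zariski restriction), the adjunction unit $\mathrm{id}\to R\epsilon_*\epsilon^*$ induces on $2p$-th hypercohomology a commutative square
\[
\xymatrix{
\rH^{2p}(X_\Zar,\QQ(p)) \ar[r]\ar[d] & \rH^{2p}(X_\Zar,\QQ(p)_{\DB,\Zar})\ar[d] \\
\rH^{2p}(X_\et,\QQ(p)) \ar[r] & \rH^{2p}(X_\et,\QQ(p)_{\DB,\et})
}
\]
whose horizontal arrows are the two realizations of the cycle class map and whose vertical arrows are the comparison maps attached to $\epsilon$.

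It remains to check that both vertical arrows are isomorphisms respecting the identifications at each end. For the left column this is \'etale descent for motivic cohomology with rational coefficients (see \cite{MVW06}), and both sources identify canonically with $\CH^p(X)_\QQ$ compatibly, since the two identifications (\cite{JoshuaI} in the \'etale case and its Zariski analogue) are induced by one and the same motivic complex. For the right column it is precisely \eqref{etale_descent_for_DB} together with \cite[Thm. 2.5(i)]{Koh19}, both sides computing $\rH^{2p}_{\DB}(X,\QQ(p))$. The step I expect to require the most care is verifying that these two descent isomorphisms are induced by the \emph{single} morphism $\epsilon$ in a manner strictly compatible with the map of complexes $\QQ(p)\to\QQ(p)_{\DB}$; this coherence is what makes the square above commute on the nose, rather than only up to the chosen identifications. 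Granting it, the square identifies the \'etale-theoretic $\cl_{\DB}$ with the Zariski-theoretic composite, which is Bloch's map by \cite{Koh19}; this proves the proposition.
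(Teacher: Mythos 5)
Your proposal is correct and follows essentially the same route as the paper: the paper's proof also reduces to the constant (identity-atlas) case and then combines the \'etale descent isomorphism \eqref{etale_descent_for_DB} for $\QQ(p)_{\DB,\et}$ with Kohrita's Zariski description of Bloch's map, exactly the two vertical comparisons in your square. You merely make explicit the change-of-topology square via $\epsilon\colon X_{\et}\to X_{\Zar}$ and the compatibility of the two descent isomorphisms with the map $\QQ(p)\to\QQ(p)_{\DB}$, details the paper leaves implicit in ``the result follows from \eqref{etale_descent_for_DB} and the description above.''
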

\begin{proof}
In this case we simply take the identity $X:=\scX \xrightarrow{id} \scX$ as an atlas, the $0$-coskeleton is then the constant simplicial scheme $X$, and the result follows from (\ref{etale_descent_for_DB}) and the description above of the DB-cycle class map for $X$.
\end{proof}

\section{GFC for moduli stack of oriented algebraic K3 surfaces}\label{Sec5}

In this section, we will study the separated locus of the moduli stack of oriented quasi-polarized K3 surfaces. 

\subsection{Oriented K3 surfaces}Following \cite{GHS07}, we first introduce the spin structures on a K3 surface. 
\begin{definition}
    {\it A spin structure} on a quasi-polarized K3 surface $(S, L)$ of genus $g$ is a choice $\omega$ of the two generators of $$\Hom_{\ZZ}(\det(\rH^2_{\rm prim}(S, \ZZ)), \det \Lambda_g).$$  The triple $(S, L, \omega)$ is called an {\it oriented  quasi-polarized K3 surface of genus $g$}. A morphism between two oriented quasi-polarized K3 surfaces $(S, L, \omega)$ and  $(S', L', \omega')$  is a morphism $f:S'\to S$ satisfying $f^*(L)=L'$ and $\omega'=\omega\circ\det(f^*)$.
\end{definition}

Let $\widetilde{\scF_g}$ be the moduli stack of quasi-polarized oriented K3 surfaces of genus $g$.  The natural forgetful functor
$$u:\widetilde{\scF_g}\to \scF_{g},$$
is \'etale of degree $2$.  It follows that $\widetilde{\scF}_g$ is a smooth Deligne-Mumford stack. Let $\widetilde{\cF_g}$ be the coarse moduli space of $\widetilde{\scF}_g$. Then it is isomorphic to a Shimura variety of orthogonal type. 
 \begin{proposition}[\cite{GHS07}]
There is an isomorphism 
$\widetilde{\cF}_g\to \widetilde{\rS\rO}(\Lambda_g)\backslash \rD_{\Lambda_g}$ and the forgetful map on coarse moduli spaces  $\widetilde{\cF}_g\to \cF_g$ is a double covering branched over $\cH^g_{0,-2}$.
\end{proposition}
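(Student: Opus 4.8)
The plan is to deduce both assertions from the marked period description of $\cF_g$, keeping track of the extra orientation datum carried by a spin structure. First I would observe that a marking, i.e. an isometry $\phi:\rH^2_{\rm prim}(S,\ZZ)\xrightarrow{\sim}\Lambda_g$, records two pieces of information: the period point in $\rD_{\Lambda_g}$ and the induced generator $\det\phi$ of $\Hom_\ZZ(\det\rH^2_{\rm prim}(S,\ZZ),\det\Lambda_g)$, that is, a spin structure $\omega$. Two markings differ by an element $\gamma\in\rO(\Lambda_g)$, and they induce the same $\omega$ precisely when $\det\gamma=1$. Hence, while the period alone is well defined only modulo $\widetilde{\rO}(\Lambda_g)$ (giving the known isomorphism $\cF_g\cong\widetilde{\rO}(\Lambda_g)\backslash\rD_{\Lambda_g}$), the pair (period, spin structure) is well defined modulo the orientation-preserving subgroup $\widetilde{\rS\rO}(\Lambda_g)=\widetilde{\rO}(\Lambda_g)\cap\SO(\Lambda_g)$. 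Running the Torelli-type argument that identifies $\cF_g$ with $\widetilde{\rO}(\Lambda_g)\backslash\rD_{\Lambda_g}$, now for marked \emph{oriented} K3 surfaces, yields the desired isomorphism $\widetilde{\cF}_g\cong\widetilde{\rS\rO}(\Lambda_g)\backslash\rD_{\Lambda_g}$, compatibly with the forgetful map to $\cF_g$.

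For the second assertion I would first record that $\widetilde{\rS\rO}(\Lambda_g)$ has index $2$ in $\widetilde{\rO}(\Lambda_g)$: the determinant is surjective because $\Lambda_g$ contains roots, and for any primitive $\delta\in\Lambda_g$ with $\delta^2=-2$ the reflection $s_\delta(x)=x+(x,\delta)\delta$ lies in $\widetilde{\rO}(\Lambda_g)$ (it preserves $\Lambda_g$ and, since $(y,\delta)\in\ZZ$ for $y\in\Lambda_g^\vee$, acts trivially on $\Lambda_g^\vee/\Lambda_g$) and has $\det s_\delta=-1$. Consequently the induced map on coarse spaces $\widetilde{\rS\rO}(\Lambda_g)\backslash\rD_{\Lambda_g}\to\widetilde{\rO}(\Lambda_g)\backslash\rD_{\Lambda_g}$ is a degree-two cover, ramified exactly over those $[\sigma]$ whose stabilizer in $\widetilde{\rO}(\Lambda_g)$ is not contained in $\widetilde{\rS\rO}(\Lambda_g)$, i.e. whose stabilizer contains an element of determinant $-1$.

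The core of the proof is then to identify this branch divisor with $\cH^g_{0,-2}$. If $[\sigma]\in\cH^g_{0,-2}$, then under the identification of $\cH^g_{0,-2}$ with the special cycle $\cZ(-1,0)=\widetilde{\rO}(\Lambda_g)\backslash\sum_{\delta}\delta^\perp$ the period $\sigma$ is orthogonal to some primitive $\delta\in\Lambda_g$ with $\delta^2=-2$; the reflection $s_\delta$ then fixes $\sigma$ and has determinant $-1$, so $[\sigma]$ is a ramification point. Conversely, any divisorial component of the ramification locus is the fixed locus of some $\gamma\in\widetilde{\rO}(\Lambda_g)$ with $\det\gamma=-1$; for the fixed subdomain to be a divisor, the invariant lattice $\Lambda_g^\gamma$ must have signature $(2,18)$, forcing $\gamma$ to be the reflection in a single primitive negative vector $\delta$. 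A short divisibility computation, using that $(\Lambda_g^\vee,\delta)=\ZZ$ for primitive $\delta$ together with the discriminant-kernel condition, shows that $s_\delta\in\widetilde{\rO}(\Lambda_g)$ forces $\delta^2=-2$; since every vector of $\Lambda_g$ is trivial in $\Lambda_g^\vee/\Lambda_g$, these are exactly the vectors cut out by $\cZ(-1,0)$. Thus the branch divisor is precisely $\cH^g_{0,-2}$, and away from it no determinant $-1$ element fixes the period in codimension one, so the cover is \'etale there. Invoking Proposition \ref{nodalloci} (which identifies $\cH^g_{0,-2}$ with $\cF_g\setminus\cF_g^p$) then matches this with the geometric description.

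I expect the main obstacle to be this branch-locus computation, and within it the converse direction: producing the orientation-reversing reflection along $\cH^g_{0,-2}$ is immediate, but controlling \emph{all} divisorial ramification—showing that a determinant $-1$ element fixing a divisor must be a $(-2)$-reflection and cannot arise from a higher-rank coinvariant lattice or from a reflection in a vector of some other norm—requires the careful lattice and divisibility analysis above (and is where Eichler's criterion, Proposition \ref{Eichler}, is convenient). A secondary technical point, which I would handle with the standard conventions, is the bookkeeping of the two connected components of the period domain (the $\pm$ decorations on the orthogonal groups), so that $\cF_g$, $\widetilde{\cF}_g$, and the cover between them are all taken with respect to the same component $\rD_{\Lambda_g}$.
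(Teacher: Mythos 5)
The paper offers no proof of this proposition at all --- it is quoted directly from \cite{GHS07} --- so your proposal can only be measured against the standard argument from that reference, which it reconstructs essentially correctly: a marking determines both the period point and a generator of $\Hom_{\ZZ}(\det(\rH^2_{\rm prim}(S,\ZZ)),\det\Lambda_g)$, two markings induce the same spin structure exactly when they differ by a determinant $+1$ isometry, so the refined period map identifies $\widetilde{\cF}_g$ with $\widetilde{\rS\rO}(\Lambda_g)\backslash\rD_{\Lambda_g}$; and the branch divisor of the index-two quotient is governed by determinant $-1$ elements with divisorial fixed locus, which your divisibility computation correctly pins down as the $(-2)$-reflections $s_\delta$ lying in the discriminant kernel, giving exactly $\cH^g_{0,-2}=\cZ(-1,0)$.

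Two refinements are needed to make the sketch airtight. First, change of marking should be taken in $\widetilde{\rO}(\Lambda_g)$ (isometries of $\Lambda_g$ that extend to $\Lambda$ fixing the polarization class), not in all of $\rO(\Lambda_g)$; your next sentence in effect uses this, but as written the first sentence is too loose. Second, your claim that a divisorial fixed locus forces the invariant lattice $\Lambda_g^\gamma$ to have signature $(2,18)$ overlooks projective fixed points with eigenvalue $-1$: an element fixing a hyperplane section of $\rD_{\Lambda_g}$ pointwise is either a reflection $s_\delta$ or a quasi-reflection $-s_\delta$ (or $-\mathrm{id}$, which fixes all of $\rD_{\Lambda_g}$). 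This gap closes by a parity observation you did not make: since $\rank\,\Lambda_g=21$ is odd, $\det(-s_\delta)=+1$, so quasi-reflections can never witness ramification for this particular double cover; and $-\mathrm{id}$ has determinant $-1$ but acts trivially on $\Lambda_g^\vee/\Lambda_g\cong\ZZ/(2g-2)\ZZ$ only when $g=2$, so your argument (and indeed the statement, read divisorially as you correctly do with your ``in codimension one'' hedge) requires $g>2$, which is the regime of the paper's main theorem. With these points patched, your proof is correct and is the GHS argument.
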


The stack $\widetilde{\scF}_{g}$ remains non-separated, but it behaves slightly better than $\scF_g$ in the sense that it admits a larger separated locus.

\subsection{Separated locus of $\widetilde{\scF}_g$} Let us consider the open substack $\widetilde{\scF}_g^s$ of oriented quasi-polarized K3 surfaces of genus $g$ containing at most one exceptional $(-2)$-curve. A key observation is 
\begin{proposition}\label{seploci}
The open substack  $\widetilde{\scF}_g^s$ is a separated smooth Deligne-Mumford stack. 
\end{proposition}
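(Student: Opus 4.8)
The plan is to prove the two assertions separately: that $\widetilde{\scF}_g^s$ is a smooth Deligne--Mumford stack of finite type is immediate, while separatedness is the real content and will be handled through the valuative criterion. For the first part, recall that $u:\widetilde{\scF}_g\to\scF_g$ is \'etale of degree $2$ and $\scF_g$ is a smooth Deligne--Mumford stack; hence $\widetilde{\scF}_g$ is smooth Deligne--Mumford, and $\widetilde{\scF}_g^s$, being an open substack, is again a smooth Deligne--Mumford stack of finite type over $\CC$. Its diagonal is then automatically of finite type, so for separatedness it suffices to verify the valuative criterion: given a discrete valuation ring $R$ (which we may take to have algebraically closed residue field) with fraction field $K$, and two families $(\mathcal{S}_i,\mathcal{L}_i,\omega_i)\in\widetilde{\scF}_g^s(R)$ for $i=1,2$, every isomorphism $\phi$ of the generic fibres over $K$ extends (uniquely) to an isomorphism over $R$.

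First I would reduce the problem to flops. Taking the closure of the graph of $\phi$ produces a birational map $\Phi:\mathcal{S}_1\dashrightarrow\mathcal{S}_2$ over $R$. Since each $\mathcal{S}_i\to\Spec R$ is smooth with trivial relative canonical bundle, both total spaces are relative minimal models, so $\Phi$ is an isomorphism in codimension one; it is therefore a composite of flops of rational curves contained in the special fibre. Because $\mathcal{L}_1$ and $\mathcal{L}_2$ are relatively nef and agree over $K$, the flopped curves are exactly $(-2)$-curves $C$ on the special fibre $S_0$ with $L\cdot C=0$ (where $L:=\mathcal{L}_1|_{S_0}$), i.e. the exceptional curves contracted by the polarisation. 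Consequently the discrepancy between the two fillings is recorded by an element of the stabiliser Weyl group $W_L=\langle\, s_C : C\ \text{exceptional}\,\rangle\subseteq\rO(\Lambda_g)$ generated by the reflections in these curves; the two fillings are isomorphic over $R$ compatibly with $\phi$ precisely when this recorded element, after taking the orientation into account, is trivial. (Equivalently, one may contract the exceptional curves in families and compare with the separated moduli of ample-polarised K3 surfaces with at worst an $A_1$-singularity.)

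The crux is the interaction with the orientation, and here the hypothesis ``at most one exceptional curve'' is decisive. A single reflection $s_C$ in a $(-2)$-class $C\in\Lambda_g$ fixes $L$ but has determinant $-1$ on $\Lambda_g$, so it reverses the orientation $\omega$; equivalently $s_C\notin\widetilde{\rS\rO}(\Lambda_g)$. Since $\phi$ preserves the orientation datum, which is discrete and hence locally constant over $\Spec R$, any element recording the discrepancy between $(\mathcal{S}_1,\omega_1)$ and $(\mathcal{S}_2,\omega_2)$ must lie in the orientation-preserving part $W_L\cap\widetilde{\rS\rO}(\Lambda_g)$. When $S_0$ carries at most one exceptional curve $C$, one has $W_L=\{1,s_C\}\cong\ZZ/2$, so $W_L\cap\widetilde{\rS\rO}(\Lambda_g)=\{1\}$. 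Thus the discrepancy is trivial, $\Phi$ is an isomorphism over $R$, and $\phi$ extends; uniqueness is automatic because an isomorphism of the separated total spaces is determined by its restriction to the dense generic fibre. This establishes the valuative criterion, hence separatedness.

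The step I expect to be the main obstacle is the precise bookkeeping underlying the second and third paragraphs: identifying the cohomological effect of flopping a $(-2)$-curve with the reflection $s_C$ (via Picard--Lefschetz and the action on the local system of the smooth family), and verifying that this effect is exactly what the orientation constrains. It is this sign computation that makes the count sharp. With two disjoint exceptional curves $C_1,C_2$ one gets $W_L\cong(\ZZ/2)^2$, and the orientation-preserving element $s_{C_1}s_{C_2}$ (of determinant $+1$) survives in $W_L\cap\widetilde{\rS\rO}(\Lambda_g)$, producing genuinely non-isomorphic fillings; this explains why $\widetilde{\scF}_g$ itself is non-separated, why the unoriented separated locus reaches only $\scF_g^p$, and why passing to orientations enlarges the separated locus precisely up to the one-curve stratum $\widetilde{\scF}_g^s$.
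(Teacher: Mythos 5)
Your proposal is correct, and its engine is the same as the paper's: the valuative criterion plus the observation that modifying a filling along a single exceptional $(-2)$-curve acts on $\rH^2$ as a reflection of determinant $-1$, which cannot happen when the generic-fibre isomorphism preserves the orientation and the orientation datum is locally constant over $\Spec R$. The difference is one of packaging. The paper splits into three cases: when a relatively ample bundle is available (either $\cL$ itself, or $\cL-\cC$ after subtracting the unique family of exceptional curves when $\cL_K$ fails to be ample), it extends $\phi_K$ directly by Matsusaka--Mumford's extension theorem \cite[Theorem 1]{MM64}, and only in the remaining case ($\cL_K$ ample but $\cL_0$ not) does it invoke Burns--Rapoport \cite[Theorem 2]{BR75} to identify the failure of extension with a single elementary transform, then rule that out by the orientation. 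You instead treat all cases uniformly: decompose the birational map between the two fillings into flops of $\cL$-degree-zero $(-2)$-curves in the special fibre and do Weyl-group bookkeeping in $W_L\cap\widetilde{\rS\rO}(\Lambda_g)$. What you flag as the main obstacle --- identifying the cohomological effect of a flop with the reflection $s_C$ --- is precisely what the paper outsources to \cite{BR75}, so neither argument proves it from scratch. The paper's case split buys economy of references (classical K3 theory only, no appeal to a flop decomposition of birational maps between relative minimal models), while your uniform treatment makes the chamber structure explicit and, as your last paragraph shows, immediately explains why separatedness fails beyond the one-curve stratum, where $s_{C_1}s_{C_2}$ is orientation-preserving. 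One small point of care: your claim that the flopped curves satisfy $\cL\cdot C=0$ deserves the one-line justification that $\Phi_\ast\cL_1=\cL_2$ (the bundles agree over $K$ and $\Phi$ is an isomorphism in codimension one), so flopping a curve of positive degree would violate nefness of $\cL_2$; with that noted, the argument is complete.
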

\begin{proof}
Let $R$ be a discrete valuation ring over $\CC$ and $K$ its fraction field. To prove the separatedness,  it suffices to show that for any two families 
$(\cX, \cL, \omega) \to \Spec R$ and  $(\cX',\cL',\omega')\to \Spec R$ of quasi-polarized oriented K3 surfaces of genus $g$ with at most one exceptional $(-2)$-curve,  if there is an isomorphism 
\begin{equation}
  \phi_K:  (\fX_K, \cL_K, \omega_K)\cong (\fX'_K,\cL'_K, \omega'_K)
\end{equation}
of quasi-polarized oriented K3 surfaces over the generic point, then $\phi_K$ can be uniquely extended to an isomorphism over $R$. There are three possibilities: 

(1) if $\cL$ is relative ample, one can apply \cite[Theorem 1]{MM64} to the pairs $(\cX,\cL)$  and $(\cX,\cL')$. It is not hard to such extension must preserve the orientation. 

(2) if $\cL_K$ is not ample, then by our assumption, there exist unique families of exceptional $(-2)$-curves on $\cX\to \Spec R$ and $\cY\to \Spec R$, denoted by $\cC\to \Spec R$ and $\cC'\to \Spec R$ respectively.  One can apply  \cite[Theorem 1]{MM64} to the pairs  $(\cX,\cL-\cC)$ or $(\cX', \cL'-\cC')$ to obtain the extension.

(3) if $\cL_K$ is ample, but the restriction $\cL_0=\cL|_{\cX_0}$ is not ample. Then $\cX_0$ contains an exceptional $(-2)$-curve $C$.  If the isomorphism of the pairs $(\cX_K,\cL_K)\cong (\cX'_K,\cL_K')$ can not be extended to $\Spec R$,  then the birational map $(\cX,\cL)\dashrightarrow (\cX', \cL') $ is an elementary transform associated to $C$ (cf.~\cite[Theorem 2]{BR75}). However, the monodromy group of a single elementary transform acts on $\rH^2(\cX_\eta,\ZZ)$ as a reflection, which does not preserve the orientation of the generic fiber. This contradicts to our assumption.
\end{proof}

Let $\widetilde{\pi}: \widetilde{\scS}_g^s\to \widetilde{\scF}_g^s$ be the universal family. Then $\widetilde{\scS}_g^s$ is also a separated Deligne-Mumford stack (non quasi-projective) and we denote by $\CH^2(\widetilde{\scS}_g^s)$ the rational Chow group.   We can state a stacky version of O'Grady's generalized Franchetta conjecture (GFC) for universal oriented K3 surfaces.
\begin{conjecture}[GFC for Oriented K3 surfaces]\label{conj:stackgenfranchet}
For any cycle class $\alpha\in\CH^2(\widetilde{\scS}_g^s)$, there exists $m\in\QQ$ and an open substack $U\subseteq \widetilde{\scF}_g^s$ such that the restriction of  $\alpha-mc_2(\cT_{\widetilde{\pi}}) $ to $\widetilde{\pi}^{-1}(U)$ is zero. 
\end{conjecture}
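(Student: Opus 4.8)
The plan is to isolate the unconditional Deligne--Beilinson content furnished by Theorem \ref{mainthm} and then upgrade it to a statement about rational equivalence by means of Beilinson's injectivity conjecture. I would first reformulate the assertion in terms of the generic fiber. Let $k=\CC(\widetilde{\scF}_g^s)$ and let $S_\eta$ be the generic fiber of $\widetilde{\pi}$, a K3 surface over $k$; since $\CH^2(S_\eta)_\QQ=\varinjlim_U \CH^2(\widetilde{\pi}^{-1}(U))_\QQ$, the existence of an open $U$ on which $\alpha-mc_2(\cT_{\widetilde{\pi}})$ restricts to zero is equivalent to the vanishing of its image in $\CH^2(S_\eta)_\QQ=\CH_0(S_\eta)_\QQ$. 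Because $c_2(\cT_{\widetilde{\pi}})|_{S_\eta}=c_2(T_{S_\eta})=24\,c_{S_\eta}$ for the Beauville--Voisin class $c_{S_\eta}$ (\cite{BV04}), which has degree $24\neq 0$, I may fix $m=\tfrac{1}{24}\deg(\alpha|_{S_\eta})$, so that $\alpha-mc_2(\cT_{\widetilde{\pi}})$ becomes a zero-cycle of degree zero on $S_\eta$. As $\rH^4(S_\eta,\QQ)=\QQ$ is spanned by the point class, degree zero is the same as cohomological triviality on the fiber, and the problem reduces to showing that this class vanishes in $\CH_0^{\deg 0}(S_\eta)_\QQ$.

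The unconditional input is the Deligne--Beilinson shadow of this vanishing, which is precisely Theorem \ref{mainthm} carried out on the separated oriented stack $\widetilde{\scS}_g^s$. Using the cohomological generalized Franchetta conjecture of \cite{BL17}, the class $\alpha-mc_2(\cT_{\widetilde{\pi}})$ is Betti-trivial after restriction to a suitable $\widetilde{\pi}^{-1}(U)$; by the long exact sequence \eqref{long_exact_seq_for_stacks} its DB class then lies in the kernel of $\rH^4_{\DB}(\widetilde{\pi}^{-1}(U),\QQ(2))\to \rH^4(\widetilde{\pi}^{-1}(U),\QQ(2))$, which is a quotient of $\rH^3(\widetilde{\pi}^{-1}(U),\CC)$. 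The Leray spectral sequence for $\widetilde{\pi}$ expresses this $\rH^3$ through the automorphic cohomology of the base, and the vanishing $\rH^3(X_\Gamma,\uE)=0$ for $n\geq 8$ of Theorem \ref{BL-vanishing}, combined with the Noether--Lefschetz computations on the unigonal and hyperelliptic families (Example \ref{Modularity-unimodular} and Proposition \ref{lattice-cuspidal-binodal}), forces this kernel to vanish. Consequently $\cl_{\DB}(\alpha-mc_2(\cT_{\widetilde{\pi}}))$ restricts to zero in $\rH^4_{\DB}(\widetilde{\pi}^{-1}(U),\QQ(2))$, unconditionally.

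It remains to upgrade this Deligne--Beilinson vanishing to vanishing in $\CH_0^{\deg 0}(S_\eta)_\QQ$, and here the argument becomes conditional. Because $\widetilde{\scF}_g^s$ and the universal family are defined over $\overline{\QQ}$, a standard spreading-out and specialization argument reduces the generic statement over $\CC$ to the corresponding statement at very general $\overline{\QQ}$-points; at such a fiber $S_t$ over $\overline{\QQ}$ one has $\cl_{\DB}((\alpha-mc_2(\cT_{\widetilde{\pi}}))|_{S_t})=0$ by the previous paragraph, and Beilinson's conjecture (\cite[Conj.~2.4.2.1]{Bei84}) that $\cl_{\DB}\colon \CH^2(\,\cdot\,)_\QQ\hookrightarrow \rH^4_{\DB}(\,\cdot\,,\QQ(2))$ is injective over number fields then forces $(\alpha-mc_2(\cT_{\widetilde{\pi}}))|_{S_t}=0$. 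Equivalently, this is the Bloch--Beilinson prediction $\CH_0^{\deg 0}(S_t)_\QQ=0$, which holds for a K3 surface over $\overline{\QQ}$ precisely because its Albanese is trivial. Transporting these fiberwise vanishings back to the generic fiber completes the proof. I expect this last step to be the genuine obstruction: the Deligne--Beilinson computation is unconditional, but collapsing $\CH_0^{\deg 0}(S_\eta)_\QQ$ rests on the injectivity of $\cl_{\DB}$, i.e.\ the Bloch--Beilinson-type input that is the very reason the statement is posed as a conjecture. For special values of $g$ one could instead try to bypass it using explicit geometric models, as in \cite{PSY, FL2021, Beauville2021}, but no such model is available in general.
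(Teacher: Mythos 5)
The statement you set out to prove is posed in the paper as a \emph{conjecture}, and the paper does not prove it: what it establishes is only the Deligne--Beilinson shadow (Theorem \ref{GFC_for_oriented}), namely that $\cl_{\DB}(\widetilde{\alpha}-mc_2(\cT_{\widetilde{\pi}}))$ vanishes after restriction to some $\widetilde{\pi}^{-1}(W)$, together with the remark in the introduction that the Chow-level statement over $\overline{\QQ}$ would follow from Beilinson's injectivity conjecture. Your proposal, as you yourself concede in the final paragraph, invokes exactly that open conjecture (\cite[Conj.~2.4.2.1]{Bei84}) to pass from DB-vanishing to vanishing in $\CH_0^{\deg 0}(S_\eta)_\QQ$, so it is a conditional argument and not a proof of the conjecture. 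Worse, even as a conditional argument the reduction step is flawed: Beilinson's injectivity is a statement for varieties over number fields, and your ``spreading-out and specialization'' reduces the generic-fiber statement over $\CC$ only to \emph{very general} complex points, whereas $\overline{\QQ}$-points form a countable set and need not be very general; the standard specialization injection $\CH_0(S_\eta)_\QQ\hookrightarrow \CH_0(S_t)_\QQ$ holds for $t$ outside countably many proper closed subsets, and nothing guarantees any $\overline{\QQ}$-point survives this. Moreover the cycle $\alpha$ itself need not be defined over $\overline{\QQ}$; at best your argument yields the conjecture for classes in $\CH^2(\widetilde{\scS}_{g,\overline{\QQ}}^s)$, which is precisely the weaker ``GFC over $\overline{\QQ}$'' the paper's introduction already attributes to Bloch--Beilinson.

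There is also a mechanical misstep in your unconditional middle paragraph. You restrict to $\widetilde{\pi}^{-1}(U)$ first and then claim the kernel of $\rH^4_{\DB}(\widetilde{\pi}^{-1}(U),\QQ(2))\to \rH^4(\widetilde{\pi}^{-1}(U),\QQ(2))$ vanishes, but the vanishing $\rH^3=0$ (Lemma \ref{vanishing}, via Theorem \ref{BL-vanishing} and the NL-number computations of Proposition \ref{Linearindep}) is proved only for the \emph{full} separated stack $\widetilde{\scS}_g^s$; for an arbitrary open substack the group $\rH^3(\widetilde{\pi}^{-1}(U),\CC)/F^2$ can be nonzero, so Betti-triviality on $\widetilde{\pi}^{-1}(U)$ does not imply DB-triviality there. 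The paper's route is the correct one: use Lemma \ref{inj} on all of $\widetilde{\scS}_g^s$ to lift the Betti identity of \cite{BL17} --- expressing $\cl_{\rm B}(\alpha-mc_2(\cT_{\widetilde{\pi}}))$ as a combination of classes of cycles supported over Noether--Lefschetz loci --- to an identity in $\rH^4_{\DB}(\widetilde{\scS}_g^s,\QQ(2))$, and only then restrict to the complement of those loci, where the supporting cycles themselves restrict to zero. With that repair your second paragraph recovers Theorem \ref{GFC_for_oriented}, but the gap between that theorem and the conjecture as stated is genuine and, as the paper indicates, is expected to require Bloch--Beilinson-type input or explicit geometric models as in \cite{PSY, FL2021, Beauville2021}.
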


Certainly,  Conjecture \ref{conj:stackgenfranchet} will automatically imply Conjecture \ref{conj1} via applying the push-forward map. The cohomological version of Conjecture \ref{conj:stackgenfranchet}  has also been confirmed in \cite{BL17}.

\section{Deligne-Beilinson cohomology of $\widetilde{\scS}_g^s$}\label{Sec6}
In this section, we construct some two-dimensional testing families of polarized K3 surfaces and calculate the  NL-numbers. Together with the cohomological generalized Franchetta conjecture, this leads to a proof of Theorem \ref{mainthm}.  

\subsection{Vanishing of odd cohomology groups}
 
Based on the vanishing result in Theorem \ref{BL-vanishing}, let us first give a sufficient condition for the vanishing of $\rH^3(\widetilde{\scS}_g^s,\CC)$.
\begin{lemma}\label{vanishing}
Let $\widetilde{\cH}_{A_{1,1}}$ and $\widetilde{\cH}_{A_{2}}$ be the preimages of $\cH_{A_{1,1}}$ and $\cH_{A_2}$ in $\widetilde{\cF}_g$ respectively. Suppose the fundamental classes of irreducible components of $\widetilde{\cH}_{A_{1,1}}$ and $\widetilde{\cH}_{A_{2}}$ are linearly independent in $\rH^4(\widetilde{\cF}_g, \QQ)$, then $\rH^3(\widetilde{\cF}_g^s,\CC)=0$ and $\rH^1(\widetilde{\cF}_g^s,\underline{\Lambda_g}|_{\widetilde{\cF}_g^s})=0$. 
\end{lemma}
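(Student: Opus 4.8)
The plan is to realize $\widetilde{\cF}_g^s$ as the complement $U=X\setminus Z$ of a pure codimension-two closed subset $Z$ inside the arithmetic quotient $X:=\widetilde{\cF}_g\cong \widetilde{\rS\rO}(\Lambda_g)\backslash \rD_{\Lambda_g}$, and then to run the long exact sequence of cohomology with supports. By construction $\widetilde{\scF}_g^s$ is the open locus of oriented K3 surfaces carrying at most one exceptional $(-2)$-curve, so $Z$ is exactly the locus with at least two such curves; its codimension-two irreducible components are precisely the components of the binodal locus $\widetilde{\cH}_{A_{1,1}}$ and the cuspidal locus $\widetilde{\cH}_{A_2}$, while all deeper strata lie in their closures. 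Here $V=\Lambda_g$ has signature $(2,19)$, so $n=19\geq 8$ and Theorem \ref{BL-vanishing} applies; and since we use $\CC$- and local-system coefficients, the cohomology of the stack agrees with that of the coarse space, so I may argue on $X_\Gamma$.

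First I would record, for a local system $\uE$ on the smooth $X$, the localization sequence
$$\cdots \to \rH^k_Z(X,\uE)\to \rH^k(X,\uE)\to \rH^k(U,\uE|_U)\to \rH^{k+1}_Z(X,\uE)\to\cdots$$
attached to the triangle $i_*i^!\uE\to \uE\to Rj_*j^*\uE$. The crucial input is semipurity: since $Z$ has complex codimension $\geq 2$ in the smooth $X$, the local cohomology sheaves $\mathcal{H}^k_Z(\uE)$ vanish for $k<4$, whence $\rH^k_Z(X,\uE)=0$ for $k<4$; this uses only the codimension bound, not smoothness of $Z$. Moreover, for constant coefficients $\rH^4_Z(X,\CC)$ is freely generated by the classes supported on the codimension-two components of $Z$, and the map $\gamma\colon \rH^4_Z(X,\CC)\to \rH^4(X,\CC)$ sends each generator to the corresponding fundamental class.

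For the second vanishing I would take $\uE=\underline{\Lambda_g}$, the local system of the standard representation $\Lambda_g\otimes\CC$ of $\rO(V_\RR)$. As $1,2<4$, semipurity kills $\rH^1_Z$ and $\rH^2_Z$, so the sequence produces an isomorphism $\rH^1(U,\underline{\Lambda_g}|_U)\cong \rH^1(X,\underline{\Lambda_g})$, and Theorem \ref{BL-vanishing} (1) makes the right-hand side vanish. For the first vanishing I would take constant coefficients $\CC$: semipurity gives $\rH^3_Z=0$ and Theorem \ref{BL-vanishing} (1) gives $\rH^3(X,\CC)=0$, so the sequence degenerates to
$$0\to \rH^3(U,\CC)\to \rH^4_Z(X,\CC)\xrightarrow{\ \gamma\ } \rH^4(X,\CC),$$
identifying $\rH^3(\widetilde{\cF}_g^s,\CC)$ with $\ker\gamma$. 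Under $\gamma$ the generators map to the fundamental classes of the components of $\widetilde{\cH}_{A_{1,1}}$ and $\widetilde{\cH}_{A_2}$, so $\ker\gamma=0$ is precisely the linear-independence hypothesis, giving $\rH^3(\widetilde{\cF}_g^s,\CC)=0$.

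The hard part will be the bookkeeping that links the abstract exact sequence to the stated hypothesis: verifying that the complement is genuinely pure of codimension two with components exactly those of $\widetilde{\cH}_{A_{1,1}}$ and $\widetilde{\cH}_{A_2}$ (so that the deeper strata of higher Picard rank are controlled), and pinning down the identification of $\rH^4_Z(X,\CC)$ with the span of the fundamental classes together with the compatibility of $\gamma$ with the cycle class map. In the Deligne-Mumford / orbifold setting this forces me to work rationally and to invoke purity for the smooth stack, equivalently for the $V$-manifold coarse space. Once these points are in place, Theorem \ref{BL-vanishing} and semipurity are the only analytic inputs, and both assertions follow.
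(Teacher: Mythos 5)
Your proposal is correct and follows essentially the same route as the paper: both run the localization sequence for the codimension-two complement $\widetilde{\cF}_g^s=\widetilde{\cF}_g\setminus(\widetilde{\cH}_{A_{1,1}}\cup\widetilde{\cH}_{A_2})$, kill the low-degree support terms, identify $\rH^4_Z$ with the span of the fundamental classes of the codimension-two components, and conclude from Theorem \ref{BL-vanishing} together with the linear-independence hypothesis. The only cosmetic difference is that you invoke semipurity directly, while the paper establishes the same vanishing and the identification of $\rH^4_Y$ by comparing the supports sequence with the Borel--Moore homology sequence via cap products, using that $\widetilde{\cF}_g$ and $\widetilde{\cF}_g^s$ are finite quotients of smooth quasi-projective varieties --- exactly the rational/$V$-manifold point you flag at the end.
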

\begin{proof}
Write $Y=\widetilde{\cF}_g-\widetilde{\cF}^s_g=\cH^g_{A_{1,1}}\cup \cH^g_{A_2}$. We first compute $\rH^3(\widetilde{\cF}_g^s,\CC)$. Note that there is a topological commutative diagram of exact sequences
\begin{equation}
\xymatrix{
    \cdots \to 0= \rH^3(\widetilde{\cF}_g,\CC)\ar[r] \ar[d]^{\cap~ [\widetilde{\cF}_g]}& \rH^3(\widetilde{\cF}_g^s, \CC)\ar[r]\ar[d]^{\cap~[\widetilde{\cF}_g^s]} & \rH^4_{Y}(\widetilde{\cF}_g,\CC) \ar[r]^\rho\ar[d]^{\cap~[Y]} & \rH^4(\widetilde{\cF}_g,\CC)\ar[d]  \\ \cdots \to \rH_{35}(\widetilde{\cF}_g,\CC)\ar[r] &\rH_{35}(\widetilde{\cF}^s_g, \CC)\ar[r]& \rH_{34}(Y,\CC) \ar[r]& \rH_{34}(\widetilde{\cF}_g,\CC)
    } 
\end{equation}
Here, the first row is the classical sequence of cohomology (and with supports in $Y$) associated to the pair $(\widetilde{\cF}_g,Y)$, 
the second row is the standard exact sequence of Borel-Moore homology of the complement $\widetilde{\cF}_g^s=\widetilde{\cF}_g-Y$, the vertical arrows are given by cap products with the fundamental classes. 

Note that $\widetilde{\cF}_g$ and $\widetilde{\cF}_g^s$ are  finite quotient of smooth quasi-projective varieties,  the cap products $\cap ~[\widetilde{\cF}_g]$ and  $\cap ~[\widetilde{\cF}_g^s]$ are isomorphisms on all degrees.  Hence the map  $$\cap~[Y]:  \rH^4_{Y}(\widetilde{\cF}_g,\CC)\to \rH_{34}(Y,\CC) $$ is also an isomorphism. The space $\rH^4_Y(\widetilde{\cF}_g,\CC)$  is spanned by the fundamental classes of irreducible components of $Y$.  It follows from Proposition \ref{Linearindep} that $\rho$ is injective. This implies $\rH^3(\widetilde{\cF}_g^s,\CC)=0$.  

A similar argument shows that $\rH^1(\widetilde{\cF}_g, \underline{\Lambda_g}|_{\widetilde{\cF}_g^s})=0$.
\end{proof}

\subsection{Families of K3 surfaces in $\cF_g$}
Regard $\cF_g$ as the coarse moduli space of polarized K3 surfaces with at worst isolated ADE singularities. Let us construct  two-dimensional proper families of polarized K3 surfaces in $\cF_g$ for all $g$.  

\subsection*{Hyperelliptic family for $g>1$ odd} Let $\Sigma_1=\PP^2$. Let us take $\cC_1\subseteq \Sigma_1 \times \PP^1\times \PP^1$  a general member in  $| \cO_{\PP^2}(2)\boxtimes\cO_{\PP^1\times \PP^1}(-2K_{\PP^1\times \PP^1})|$.  Each fiber $\cC_{1,t}$ of the projection $\cC_1\to \Sigma_1$  over a point $t\in \Sigma_1$ is a curve in $|\cO_{\PP^1\times \PP^1}(-2K_{\PP^1\times \PP^1})|$ with at worst a cusp  or two nodes.  Consider the double covering 
$$\cX_1\to \Sigma_1 \times \PP^1\times \PP^1$$
branched over $\cC_1$, equipped with natural projections to the three factors. The first projection gives a family $\phi_1:\cX_1\to \Sigma_1$ of surfaces over $\Sigma_1$.  For any $t\in\Sigma_1$, the fiber $\cX_{1,t}=\phi_1^{-1}(t)$  is a double covering of $\PP^1\times \PP^1$ branched over the curve $\cC_{1,t}$. It is a hyperelliptic K3 surface and it is smooth (resp.~with $A_1$ or $A_2$ singularity) if and only if the branching curve $\cC_{1,t}$ is smooth (resp.~nodal or cuspidal).  Moreover, the family  $\cX_1\to \Sigma_1$ has  a relative polarization   $$\cL_1=\phi_1^\ast\cO_{\PP^2}(1)+\pi_2^\ast \cO_{\PP^1}(1)+ \frac{g-1}{2}\pi_3^\ast \cO_{\PP^1}(1),$$
of odd genus  $g$, which induces  a map $\Sigma_1\to \cF_g$ factoring through the natural map $\Sigma_1\to |-2K_{\PP^1\times \PP^1}|$.  Conversely, a hyperelliptic K3 surface in $\cF_g$ ($g>1$ is odd) with at worst ADE singularities arises as a double covering of $\PP^1\times \PP^1$ branching over an integral curve  in $|-2K_{\PP^1\times \PP^1}|$.

\subsection*{Hyperelliptic family for $g>2$ even} Let $\Sigma_2=\PP^2$. Consider the Hirzebruch surface $\bF_1=\PP(\cO_{\PP^1}\oplus \cO_{\PP^1}(-1))$, we denote by $s$ the section of $\bF_1\to \PP^1$ and $h$ the fiber class.  Take 
$\cC_2\subseteq \PP^2\times \bF_1$ a general member
in $|\cO_{\PP^2}(2)\boxtimes\cO_{\FF_1}(-2K_{\bF_1})|$ and let $\cX_2\to \Sigma_2 \times \bF_1$ be the double covering branched over $\cC_2$.   The projection $$\phi_2:\cX_2\to \Sigma_2$$ to the first factor gives a family of hyperelliptic K3 surfaces. The fiber $\cX_{2,t}=\phi_2^{-1}(t)$ is a double covering of $\bF_1$ branched over the curve $\cC_{2,t}$. Similarly as before, this is a family of hyperelliptic K3 surfaces which carries a relative polarization 
$$\cL_2=(\pi_1')^\ast \cO_{\PP^2}(1)+ (\pi_2')^\ast \cO_{\bF_1}(s+\frac{g}{2}h) $$
of even genus  $g$. Similarly, we have a moduli map $\Sigma_2\to \cF_g$ factoring through $\Sigma_2\to |-2K_{\bF_1}|$ for a generic choice of $\cC_2$.

\subsection*{Unigonal family}  The construction of the family of unigonal K3 surfaces needs more attentions. 
To get a two-dimensional proper family of unigonal K3 surfaces, we follow the construction in \cite[\S 4.2.2]{LaO19} to proceed as follows:

Let $\Sigma_3=(\PP^2)^*$ be the dual projective plane, $\Phi=\{([L], p)\in (\PP^2)^*\times \PP^2|~p\in L\}$ the universal line, and $\pi_1: \Phi \to (\PP^2)^*$ and $\pi_2: \Phi \to \PP^2$ the projections.

Consider the fourfold
$$\cY=\PP_{\Phi}(\cO_{\Phi}\oplus \cO_{\Phi}(-4)),$$
where $\cO_{\Phi}(1)$ is the pull-back of $\cO_{\PP^2}(1)$ along $\pi_2: \Phi\to \PP^2$. Let $\cA=\PP_{\Phi}(\cO_{\Phi}(-4))$ and $\cF$ the pull-back of a line in $\PP^2$ along $\cY\to \Phi \xrightarrow{\pi_2} \PP^2$. Take a general member
$$\cB\in |3\cA+12\cF|$$
such that it is smooth and disjoint from $\cA$, and form the double covering $\cX_3\to \cY$ branched over $(\cA+\cB)$. Then the composite $$\cX_3\to \cY \to \Phi \xrightarrow{\pi_1} (\PP^2)^*$$ gives a family of unigonal K3 surfaces, denoted by $\phi_3: \cX_3\to \Sigma_3$.

\subsection{The NL-number computation}
For the three families $\phi_i$ ($i=1, 2, 3$) constructed above, we are going to compute the intersection numbers
\begin{equation}\label{NLnum}
    \Sigma_i\cdot \cH_{A_{1,1}}^g~\hbox{and}~\Sigma_i\cdot \cH_{A_2}^g
\end{equation} 
in $\cF_g$.  

\subsection*{Hyperelliptic family} 
We first consider the hyperelliptic families. By taking $\cC_1$ and $\cC_2$ generically, we can further assume that all the fibers of $\cC_i\to \Sigma_i$ have only trivial automorphisms. Then we have embeddings
$$ \psi_1:\Sigma_1\to |-2K_{\PP^1\times \PP^1}|~\hbox{and}~ \psi_2:\Sigma_2\to |-2K_{\bF_1}|.$$

Let us first consider the case $i=1$, and similar arguments will apply to the case $i=2$. Denote by 
\begin{center}
$\cA_{1,1}\subseteq |-2K_{\PP^1\times \PP^1}|$ and $\cA_2\subseteq |-2K_{\PP^1\times \PP^1}|$  
\end{center}
the Zariski closure of the  binodal locus and respectively the cuspidal locus in $| -2K_{\PP^1\times \PP^1}|\cong \PP^{24}$ which parametrize curves in $|-2K_{\PP^1\times \PP^1}|$ with at worst two nodes (resp.~a cusp).   It is known that $\cA_{1,1}$ and $\cA_{2}$ are both of codimension two. Then we have 
\begin{equation}
 \begin{aligned}
        \Sigma_1\cdot \cH_{A_{1,1}}^g &=\Sigma_1\cdot \cA_{1,1}\\
         \Sigma_1\cdot  \cH_{A_{2}}^g &=\Sigma_1\cdot \cA_{2}.
 \end{aligned}
\end{equation}
where the right hand side is taking intersections in  $|-2K_{\PP^1\times \PP^1}|$, since by construction the intersection numbers are the numbers of binodal and cuspidal members in the two-dimensional family of the branching curves  $\cC_1\to \Sigma_1$ respectively.  It suffices to compute the degree of $\cA_{1,1}$ and $\cA_{2}$ in $|-2K_{\PP^1\times \PP^1}|$. This becomes a classical enumeration problem for singular members in a family of curves, which has been studied extensively in \cite{Kazarian}, \cite{Kazarian2}, etc.  

By taking a general two-dimensional linear subspace  $B=\PP^2\subseteq |-2K_{\PP^1\times \PP^1} |$, we obtain a family $f:\cC\to B$ of curves in $|-2K_{\PP^1\times \PP^1}|$ called a {\it net of curves} in $|-2K_{\PP^1\times \PP^1}|$. Let $\Delta_f \subseteq B$ be the discriminant curve of $f$, i.e. the locus in $B$ over which the fibers are singular. Suppose that $f$ has at worst cuspidal or binodal fibers,  and let $a_2(f)$ and $a_{1,1}(f)$ be the numbers of cuspidal and binodal fibers of $f$ respectively. In this case, we have
\begin{center}
$\deg(\cA_2)=a_2(f)$ and $\deg(\cA_{1,1})=a_{1,1}(f)$.
\end{center}
By our assumption, the discriminant curve $\Delta_f$ has only nodes and cusps as its singularities and $a_2(f)$ is equal to the number of cusps of $\Delta_f$ while $a_{1,1}(f)$ is equal to the number of nodes of $\Delta_f$. Then we have the following explicit formulae which compute $a_{1,1}(f)$ and $a_2(f)$. 
 
\begin{proposition}\label{netenu}({\it cf}.~\cite[\S 11.4.4]{EH2016})
Let $S$ be a smooth projective surface and $L$ a very ample line bundle on $S$. Let $f:\cC\to \PP^2$ be a general net of curves in $|L|$ whose fibers are at worst cuspidal or binodal. Then the numbers of cuspidal fibers and binodal fibers of $f$ are given by 
\begin{align}
    a_2(f)&=2g-d+2(e-1)\\
    a_{1,1}(f)&=d-3g-e(e-1)+\frac{3}{2}(e-1)(e-2)
\end{align}
respectively, where $c_i=c_i(\Omega_S^1)$,  $\alpha=c_1(\cL)$, $g=\frac{1}{2}(9\alpha^2+9\alpha c_1+2c_1^2)+1$, $d=3\alpha^2+\alpha c_1$ and  $e=3\alpha^2+2\alpha c_1+c_2$.
\end{proposition}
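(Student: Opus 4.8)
The plan is to realize $\Delta_f\subseteq B=\PP^2$ as a reduced plane curve whose only singularities are $a_{1,1}(f)$ ordinary nodes (at the binodal members) and $a_2(f)$ ordinary cusps (at the cuspidal members), and then to recover the two unknowns from two classical invariants of $\Delta_f$ — its degree, its geometric genus, and its class — each computed as a characteristic number on $S$. The degree of $\Delta_f$ is the number of singular members in a general pencil contained in the net. First I would blow up the $\alpha^2$ base points of such a pencil to obtain a Lefschetz fibration $\widetilde S\to\PP^1$ whose general fibre is a smooth curve of genus $g_C$ with $2g_C-2=\alpha^2+\alpha c_1$ by adjunction; the Euler characteristic identity
\[
\chi_{\mathrm{top}}(\widetilde S)=c_2+\alpha^2=2(2-2g_C)+\#\{\text{nodal fibres}\}
\]
then gives $\deg\Delta_f=3\alpha^2+2\alpha c_1+c_2=e$.

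Next I would analyze the locus of singular points. Writing $P^1(\cL)$ for the bundle of first-order jets of $\cL$, which sits in $0\to\Omega_S^1\otimes\cL\to P^1(\cL)\to\cL\to0$ and has $c_1(P^1(\cL))=3\alpha+c_1$, the three-dimensional space $W\subseteq\rH^0(S,\cL)$ defining the net gives an evaluation map $W\otimes\cO_S\to P^1(\cL)$ between rank-three bundles. A member is singular at a point exactly when this map drops rank there, so the curve $\Gamma\subseteq S$ of singular points is the associated determinantal locus, a member of $|3\cL+K_S|$ with class $3\alpha+c_1$. For a general net $\Gamma$ is smooth, the map $\Gamma\to B$ sending a point to the unique member singular at it has image $\Delta_f$, and it is the normalization of $\Delta_f$ (two points of $\Gamma$ over each binodal member, one over each cuspidal member). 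Adjunction on $S$ then computes the geometric genus of $\Delta_f$:
\[
2g-2=\Gamma\cdot(\Gamma+K_S)=(3\alpha+c_1)\cdot(3\alpha+2c_1)=9\alpha^2+9\alpha c_1+2c_1^2,
\]
which is the value of $g$ in the statement.

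For the class of $\Delta_f$ I would invoke the classical description of the tangent hyperplane to a discriminant: at a one-nodal member $C$ with node $p$, the tangent line to $\Delta_f$ inside $B$ is cut out by the hyperplane $\{C'\in B : p\in C'\}$. Hence such a tangent line passes through a general member $C_0$ of the net precisely when the node $p$ lies on $C_0$, so the class of $\Delta_f$ equals the number of singular points lying on $C_0$, namely $\Gamma\cdot\cL=(3\alpha+c_1)\cdot\alpha=3\alpha^2+\alpha c_1=d$. With $\deg\Delta_f=e$, the class of $\Delta_f$ equal to $d$, its geometric genus equal to $g$, and its singularities consisting only of $a_{1,1}(f)$ nodes and $a_2(f)$ cusps, the genus formula $g=\binom{e-1}{2}-a_{1,1}(f)-a_2(f)$ and the Plücker class formula $d=e(e-1)-2a_{1,1}(f)-3a_2(f)$ form a $2\times2$ linear system; solving it yields $a_2(f)=2g-d+2(e-1)$ and $a_{1,1}(f)=d-3g-e(e-1)+\tfrac32(e-1)(e-2)$, as claimed.

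The main obstacle is the first step: proving that for a \emph{general} net the discriminant really is a nodal–cuspidal plane curve with exactly this dictionary. This requires a jet-transversality and dimension count in the universal discriminant to exclude all worse singular members (tacnodes, triple points, $A_3$ and higher) away from finitely many cusps and binodes, the smoothness and genericity of the determinantal locus $\Gamma$, and the local verification that a binodal member produces an ordinary node of $\Delta_f$ while a cuspidal member produces an ordinary cusp, so that $\Gamma\to\Delta_f$ is genuinely the normalization. The very ampleness of $\cL$, together with the generality of the net, is precisely what supplies the positivity needed for these transversality statements; once they are granted, the enumeration reduces to the purely formal computation above.
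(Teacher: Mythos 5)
Your proof is correct and is essentially the argument the paper itself relies on: the paper offers no proof of Proposition \ref{netenu} beyond the citation of \cite[\S 11.4.4]{EH2016}, where the count is obtained exactly as you do it --- compute the degree $e$ of the discriminant curve $\Delta_f\subseteq B$ by the topological Hurwitz/Euler-characteristic argument on a general pencil, its class $d=(3\alpha+c_1)\cdot\alpha$ via the smooth determinantal curve $\Gamma$ of singular points (the degeneracy locus of $W\otimes\cO_S\to P^1(\cL)$) together with the tangent-hyperplane description of the discriminant, and its geometric genus $g$ by adjunction on $\Gamma$, which normalizes $\Delta_f$, then solve the genus formula and the Pl\"ucker class formula for the numbers of cusps and nodes of $\Delta_f$. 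All of your characteristic-number computations check out (in particular $\kappa=2g-d+2(e-1)$ and $\delta=d-3g+\tfrac{1}{2}(e-1)(e-6)$, which agrees with the stated $a_{1,1}(f)$), and the transversality and local-structure caveats you flag at the end are precisely the genericity statements implicit in the cited treatment.
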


Note that for $(S,L)= (\PP^1\times \PP^1, |-2K_{\PP^1\times \PP^1}|)$ or $(\bF_1, |-2K_{\bF_1}|)$, we always have 
$$\alpha^2=32,~c_1^2=8,~\alpha c_1=-16,~c_2=4.$$
By applying Proposition \ref{netenu} to a general net of curves in $|-2K_{\PP^1\times \PP^1}| $ or $|-2K_{\bF_1}|$,  one can compute that in both cases the numbers of cuspidal and binodal members in such a net are
\begin{align}
    a_2(f)&=216\\
    a_{1,1}(f)&=1914,
\end{align}
respectively. Since the surface $\Sigma_i$ has degree $4$ in the linear system, it follows that 
\begin{proposition}
For $i=1,2$,   $\Sigma_i\cdot \cH^g_{A_{2}}=864$ and $\Sigma_i\cdot \cH^g_{A_{1,1}}=7656$.
\end{proposition}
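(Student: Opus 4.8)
The plan is to assemble the two ingredients that precede the statement: the reduction of the Noether--Lefschetz intersection numbers to enumerative degrees in the linear system, and the explicit counts coming from Proposition~\ref{netenu}. For a generic choice of branching curve $\cC_i$ one has $\Sigma_i\cdot\cH^g_{A_2}=\Sigma_i\cdot\cA_2$ and $\Sigma_i\cdot\cH^g_{A_{1,1}}=\Sigma_i\cdot\cA_{1,1}$, where $\cA_2$ and $\cA_{1,1}$ are the cuspidal and binodal loci in $|-2K|$, each of codimension two; and Proposition~\ref{netenu}, applied with the Chern data $\alpha^2=32$, $\alpha c_1=-16$, $c_2=4$, yields $\deg\cA_2=216$ and $\deg\cA_{1,1}=1914$. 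It then remains to determine the fundamental class of the image surface $\psi_i(\Sigma_i)$ in $|-2K|$ and to perform the intersection.

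The crucial step I would carry out is the computation of the degree of $\psi_i(\Sigma_i)$. Since $\cC_i$ is chosen in $|\cO_{\PP^2}(2)\boxtimes\cO(-2K)|$, each homogeneous coordinate of the moduli map $\psi_i\colon\Sigma_i=\PP^2\to|-2K|$, $t\mapsto[\cC_{i,t}]$, is a quadratic form on $\PP^2$. For generic $\cC_i$ these coordinate conics span the full six-dimensional space of conics, so $\psi_i$ factors as the $2$-uple Veronese embedding $\PP^2\hookrightarrow\PP^5$ followed by a generic linear embedding $\PP^5\hookrightarrow|-2K|$. Hence $\psi_i(\Sigma_i)$ is a linearly embedded Veronese surface, of degree $2^2=4$; equivalently, $\psi_{i\ast}[\Sigma_i]=4\,[\PP^2]$, four times the class of a linear $\PP^2$.

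Finally I would intersect. As $\cA_2$ and $\cA_{1,1}$ have codimension two, pairing their classes with that of a general linear $\PP^2$ computes their degrees, so by the projection formula the intersection number of $\psi_i(\Sigma_i)$ with a codimension-two cycle of degree $D$ is $\psi_{i\ast}[\Sigma_i]\cdot[\,\cdot\,]=4D$, giving
\begin{equation*}
\Sigma_i\cdot\cH^g_{A_2}=4\cdot216=864,\qquad \Sigma_i\cdot\cH^g_{A_{1,1}}=4\cdot1914=7656,
\end{equation*}
uniformly for $i=1,2$, the case of $\bF_1$ being identical since it shares the same Chern data. The main obstacle, though a mild one, is the genericity bookkeeping: I must confirm that for a general $\cC_i$ the intersections $\psi_i(\Sigma_i)\cap\cA_\bullet$ are transverse and consist of reduced points, so that the topological intersection number equals the honest count of cuspidal and binodal fibers, and that the associated discriminant net acquires only nodes and cusps, which is precisely the hypothesis under which Proposition~\ref{netenu} is valid.
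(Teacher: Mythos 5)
Your proof is correct and takes essentially the same route as the paper: reduce $\Sigma_i\cdot\cH^g_{A_2}$ and $\Sigma_i\cdot\cH^g_{A_{1,1}}$ to intersections with the cuspidal and binodal loci $\cA_2,\cA_{1,1}$ in the linear system, obtain their degrees $216$ and $1914$ from Proposition~\ref{netenu} applied to a general net, and multiply by $4$. Your Veronese-embedding computation simply makes explicit the degree-$4$ claim that the paper asserts without proof (``since the surface $\Sigma_i$ has degree $4$ in the linear system''), so it is a faithful, slightly more detailed version of the same argument.
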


\subsection*{Unigonal family}
Now we consider the unigonal family $\phi_3:\cX_3\to \Sigma_3=(\PP^2)^*$. Let $$\psi_3:\Sigma_3\to \cF_{g}$$ be the induced moduli map.  As above, the intersection numbers $\deg(\psi_3^\ast (\cH^g_{A_2}))$ and $\deg(\psi_3^\ast (\cH^g_{A_2}))$  are, by construction, the numbers of binodal and cuspidal members in the two-dimensional family of the branching curves  $\cB\to \Sigma_3$.

Let $M$ be the locus in $\cB$ of the singularities of all singular fibers of $\cB\to \Sigma_3$, which is smooth by genericity of $\cB$.  They fit into the following commutative diagram:
\[
\begin{tikzcd}
M \ar[r, hook, "i"] \ar[drr, "f"']&\cB \ar[r, hook] \ar[dr, "h"]&\cY \ar[d, "\pi"]\\
 & & \Sigma_3
\end{tikzcd}.
\]
Note that the family $\pi$ in which the family $\cB\to \Sigma_3$ of curves is embedded is no longer a trivial family of surfaces.

The image $\Delta_h=f(M)\subseteq \Sigma_3$ is exactly the discriminant curve of $h$. By genericity of $\cB$, we may assume that the singular fibers of $h:\cB\to \Sigma_3$ are at worst cuspidal or binodal. As before,  the intersection number $\psi_3^\ast(\cH^g_{A_2})=a_2(h)$ is equal to the number of cusps of $\Delta_h$ while  $\psi_3^\ast(\cH^{g}_{A_{1,1}})= a_{1,1}(h)$ is equal to the number of nodes of $\Delta_h$. They satisfy the natural relation
\begin{equation}\label{bi-cusp relation}
    2(a_2(h)+a_{1,1}(h))=2(p_a(\Delta_h)-p_g(\Delta_h))=\deg \DD(f)
\end{equation}
where $\DD(f)\in \CH^1(\cM)$ is the double point class of $f$, given by (\cite[Theorem 9.3]{Fulton98})
\begin{equation}\label{D-formula}
    \DD(f)=f^*f_*[M]-(c_1(f^*\cT_{\Sigma_3})-c_1(\cT_{M}))\cap [M].
\end{equation}
According to \cite[\S 8-10]{Kazarian}, we have the following formula for $a_2(h)$
\begin{equation}\label{cusp-formula}
    a_2(h)=\deg ((c_1(\Omega_{\pi}\otimes\cO_{\cY}(\cB))-c_1(\cT_{\pi}))\cap [M]).
\end{equation}
This gives 
\begin{proposition}\label{NL-unigonal}
The intersection numbers are 
\begin{align*}
    a_2(h)&=816\\
    a_{1,1}(h)&=33480.
\end{align*}
\end{proposition}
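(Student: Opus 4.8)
The plan is to reduce the proposition to a sequence of intersection-number computations in the Chow ring of the fourfold $\cY$, using the two formulas \eqref{bi-cusp relation} and \eqref{cusp-formula} already recorded above. Concretely, \eqref{cusp-formula} expresses $a_2(h)$ as $\int_M$ of an explicit divisor class, while \eqref{bi-cusp relation} together with \eqref{D-formula} expresses $a_2(h)+a_{1,1}(h)$ through the double point class $\DD(f)$; once $a_2(h)$ is known, $a_{1,1}(h)$ follows by subtraction. Thus everything comes down to (i) describing $A^\ast(\cY)$ explicitly, (ii) identifying the cycle class $[M]$, and (iii) computing the relevant Chern classes of the relative (co)tangent bundles of $\pi$.

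First I would set up $A^\ast(\cY)$ using the tower $\cY \xrightarrow{p} \Phi \xrightarrow{\pi_1} \Sigma_3$. Writing $H=\pi^\ast c_1(\cO_{\Sigma_3}(1))$, $h=\cF$ the pullback of $c_1(\cO_{\PP^2}(1))$, and $\xi=c_1(\cO_\cY(1))$, the ring $A^\ast(\Phi)$ is generated by $H,h$ subject to the incidence relation $H^2-Hh+h^2=0$ and $H^3=0$ (one checks $\int_\Phi H^2h=\int_\Phi Hh^2=1$), and $A^\ast(\cY)$ is obtained by adjoining $\xi$ with the Grothendieck relation for $E=\cO_\Phi\oplus\cO_\Phi(-4h)$. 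From this I read off $[\cA]$ as the class of the section $\PP_\Phi(\cO_\Phi(-4h))$, hence $[\cB]=3[\cA]+12h$, together with the top intersection number generating $A^4(\cY)\cong\ZZ$. The relative tangent classes are obtained from the two relative Euler sequences and the short exact sequence $0\to \cT_{\cY/\Phi}\to \cT_\pi\to p^\ast\cT_{\Phi/\Sigma_3}\to 0$; here $c_1(\cT_{\Phi/\Sigma_3})=2h-H$ follows from adjunction on $\Phi\subseteq(\PP^2)^\ast\times\PP^2$. Substituting into \eqref{cusp-formula}, and using $c_1(\Omega_\pi\otimes\cO_\cY(\cB))=-c_1(\cT_\pi)+2\,c_1(\cO_\cY(\cB))$, reduces $a_2(h)$ to $2\int_M\big(c_1(\cO_\cY(\cB))-c_1(\cT_\pi)\big)$.

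The heart of the argument is the determination of $[M]$. Since $\cB$ is smooth and $M$ is the critical locus of $h=\pi|_{\cB}\colon\cB\to\Sigma_3$, it is the corank-one degeneracy locus of the differential $dh\colon \cT_\cB\to h^\ast\cT_{\Sigma_3}$ (ranks $3$ and $2$), of expected codimension $2$; by genericity of $\cB$ it is smooth of the expected dimension, and Thom--Porteous gives $[M]=\big(c_1^2-c_2\big)(h^\ast\cT_{\Sigma_3}-\cT_\cB)$ in $A^2(\cB)$, where $\cT_\cB$ is computed from $0\to\cT_\cB\to\cT_\cY|_\cB\to\cO_\cY(\cB)|_\cB\to0$. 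Pushing $[M]$ forward to $\cY$ and pairing with divisor classes then evaluates $a_2(h)$. For the double point class I would use that, by genericity, $f\colon M\to\Delta_h$ is the normalization of the discriminant curve; then $f_\ast[M]$ equals $\deg\Delta_h$ times the line class, giving $\deg(f^\ast f_\ast[M])=(\deg\Delta_h)^2$ with $\deg\Delta_h=\int_M f^\ast H=\int_\cY i_\ast[M]\cdot\pi^\ast H$, while $c_1(f^\ast\cT_{\Sigma_3})=3\,f^\ast H$ and $\int_M c_1(\cT_M)=2-2g(M)$ is obtained from the adjunction/canonical-bundle formula for the determinantal curve $M$. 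Feeding these into \eqref{D-formula} and \eqref{bi-cusp relation} yields $a_2(h)+a_{1,1}(h)$, and hence $a_{1,1}(h)$.

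The main obstacle I anticipate is precisely steps (ii)--(iii) on the nontrivial family: because $\pi$ is not a product (as noted just before the statement), all the Chern-class bookkeeping must be pushed through the two nested projective-bundle structures, and the Thom--Porteous class $[M]$ as well as the canonical class of the determinantal curve $M$ must be computed with the correct relative tangent bundles and sign conventions. Getting the projective-bundle relation and these signs exactly right---so that the final integrals in $A^\ast(\cY)$ produce the asserted integers $a_2(h)=816$ and $a_{1,1}(h)=33480$---is where the real work and the risk of error lie; the remaining steps are formal applications of the formulas already stated.
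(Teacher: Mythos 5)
Your proposal follows essentially the same route as the paper's proof: both reduce the count to \eqref{cusp-formula} together with \eqref{bi-cusp relation} and \eqref{D-formula}, identify $M$ as the corank-one degeneracy locus of $dh:\cT_{\cB}\to h^*\cT_{\Sigma_3}$, and evaluate the resulting intersection numbers through the two projective-bundle structures on $\cY$; in particular your Thom--Porteous class $(c_1^2-c_2)(h^*\cT_{\Sigma_3}-\cT_{\cB})$ is exactly the pushforward $p_*c_{\rm top}(\cO_{\cP}(1)\otimes p^*\Omega_{\cB})$ from the resolution $\widetilde{M}\subseteq \PP_{\cB}(h^*\cT_{\Sigma_3})$ that the paper uses via Kazarian's lemma. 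The only place you are thinner than the paper is $\deg c_1(\cT_M)$, where naive adjunction does not apply to the determinantal curve $M$ and the paper instead reads it off from $\widetilde{M}$ (the zero locus of a regular section, where adjunction does apply) --- but since your own caveat flags precisely this bookkeeping, the plan is sound and matches the paper's argument.
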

\begin{proof}
We need to compute the class of $M$. Let $dh:\cT_{\cB}\to h^*\cT_{\Sigma_3}$ be the differential of $h$. Consider the projective bundle $p:\cP=\PP_{\cB}(h^*\cT_{\Sigma_3})\to \cB$. The composition
$$ p^*\cT_{\cX_3}\xrightarrow{p^*dh} p^*h^*\cT_{\Sigma_3}\to \cO_{\cP}(1)$$
gives a section $s\in \rH^0(\cP,\cO_{\cP}(1)\otimes p^*\Omega_{\cB})$. Let $\widetilde{M}$ be the zero locus of $s$. According to \cite[Lemma 2.3.1]{Kazarian2}, the restriction of $p$ to $\widetilde{M}$ is an isomorphism  onto  $M$.  In particular we have
\begin{equation}\label{M-class}
    \begin{split}
        i_*[M]&=p_*c_{\rm top}(\cO_{\cP}(1)\otimes p^*\Omega_{\cB})\\
        \deg c_1(\cT_M)&=\deg c_1(\cT_{\widetilde{M}})
    \end{split}
\end{equation}

Set
\begin{align*}
    \alpha_i&=c_i(\Omega_{\cX_3}),~
    \beta_j=c_j(\cT_{\Sigma_3}),~
    \delta=h_*(\alpha_1)\beta_1+h_*(\alpha_2)
\end{align*}
for $i=1,2,3$ and $j=1,2$.  Combining \eqref{bi-cusp relation}, \eqref{D-formula}, \eqref{cusp-formula} and \eqref{M-class},  we get 
\begin{equation}\label{formulae_for_A2_and_A11}
    \begin{aligned}
    a_2(h)=&~4h_*(\alpha_1)\beta_1^2+2h_*(\alpha_1^2+\alpha_2)\beta_1-2h_*(\alpha_1)\beta_2+2h_*(\alpha_1\alpha_2)\\
    2(a_2(h)+a_{1,1}(h))=&~\delta^2-\beta_1\delta+(-2h_*(\alpha_1\alpha_2)-h_*(\alpha_3) -2h_*(\alpha_1^2+\alpha_2)\beta_1\\& +h_*(\alpha_1)(-4\beta_1^2+3\beta_2))
    \end{aligned}
\end{equation}
Set $\zeta=c_1(\cO_{\Sigma_3}(1))$. Then the classes of $h_\ast(\alpha_i)$ and $h_\ast(\alpha_1\alpha_2)$ and $\delta$ are given in the following table:
\begin{center}
    \begin{tabular}{c|c|c|c|c|c}
    $h_*(\alpha_1)$ & $h_*(\alpha_2)$ & $h_*(\alpha_3)$ & $h_*(\alpha_1^2)$ & $h_*(\alpha_1\alpha_2)$ & $\delta$\\
    \hline
    $18$ & $210\zeta$ & $-450\zeta^2$ & $36\zeta$ & $-600\zeta^2$ & $264\zeta$\\
    \end{tabular}
\end{center}
One can get the numbers  by substituting these data into (\ref{formulae_for_A2_and_A11}).
\end{proof}

\begin{remark}\label{delta_is_discriminant}
In the above computation, $\delta$ is nothing but the class of the discriminant curve $\Delta_h$ of $h$.
\end{remark}

Now we are ready to derive the following 
\begin{proposition}\label{Linearindep}
Let $\widetilde{\cH}_{A_{1,1}}$ and $\widetilde{\cH}_{A_{2}}$ be the preimages of $\cH_{A_{1,1}}$ and $\cH_{A_2}$ in $\widetilde{\cF}_g$ respectively. Then the fundamental classes of irreducible components of $\widetilde{\cH}_{A_{1,1}}$ and $\widetilde{\cH}_{A_{2}}$ are linearly independent in $\rH^4(\widetilde{\cF}_g, \QQ)$. 
\end{proposition}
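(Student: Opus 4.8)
The strategy is to reduce the statement to the analogous independence on $\cF_g$ and then to detect the classes by intersecting with the test surfaces $\Sigma_i$. First I would examine the double cover $u:\widetilde{\cF}_g\to\cF_g$ along the cycles in question. Both $\cH^g_{A_2}$ and $\cH^g_{A_{1,1}}$ lie in the branch divisor $\cH^g_{0,-2}$, and for a defining sublattice $\mathtt{M}$ the reflection $r_v$ in any of its roots $v\perp(e_1+(g-1)f_1)$ lies in $\widetilde{\rO}(\Lambda_g)$ (it is trivial on the discriminant since $v\in\Lambda_g$), fixes $U(\mathtt{M})^\perp$ pointwise, and has determinant $-1$. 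Hence the generic stabilizer of each component contains an orientation-reversing isometry, the cover is ramified there, and $u$ does not split the component; the reduced preimage of each component is then irreducible and maps birationally onto it, so $u_*[\widetilde Z]=[Z]$. Since the $[\widetilde Z]$ are invariant under the covering involution $\sigma$ and $u^*u_*=\mathrm{id}+\sigma^*$ restricts to multiplication by $2$ on the invariant part, $u_*$ is injective there; thus any linear relation among the $[\widetilde Z]$ pushes forward to a relation among the downstairs classes $[Z]$. It therefore suffices to prove that the components of $\cH^g_{A_2}$ and $\cH^g_{A_{1,1}}$ are independent in $\rH^4(\cF_g,\QQ)$.

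To do this I would pair these classes with the fundamental classes of the test surfaces. Each $\Sigma_i$ has ample generic member, so it is not contained in the branch divisor and meets the codimension-two cycles in finitely many points, the resulting numbers being exactly the Noether--Lefschetz numbers computed above. For the cuspidal class $[\cH^g_{A_2}]$ and the binodal class $[\cH^g_{A_{1,1}}]$ (irreducible when $g\not\equiv 2,3$), the hyperelliptic family for the parity of $g$ together with the unigonal family of Proposition \ref{NL-unigonal} yields the pairing matrix
\begin{equation*}
\begin{pmatrix} 864 & 7656\\ 816 & 33480\end{pmatrix},
\end{equation*}
whose determinant $864\cdot 33480-7656\cdot 816=22\,679\,424$ is nonzero. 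This settles the proposition whenever $\cH^g_{A_{1,1}}$ is irreducible, i.e. for $g\not\equiv 2,3\pmod 4$.

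The hard part will be the exceptional range $g\equiv 2,3\pmod 4$ of Proposition \ref{irred-comp}, where a third component $\cH^{g\prime\prime}_{A_{1,1}}$ (resp. $\cH^{g\prime\prime\prime}_{A_{1,1}}$) splits off. The two generic families meet only the main binodal stratum, so they record merely the total class and cannot separate the extra one, and a third independent functional is needed. To produce it I would use the defining feature of the exceptional component found in the proof of Proposition \ref{irred-comp}, namely that its root has divisibility two, placing it in the intersection $\cP^g_{0,-2}\cap\cH^g_{g-1,\ast}$ of two Noether--Lefschetz divisors. Since $\rH^4(\cF_g,\QQ)$ is spanned by special cycles by Theorem \ref{BL-vanishing} and the divisor classes are controlled by \cite{BL17}, I would either refine the binodal count by the divisibility type of the two nodes---so that the number $7656$ decomposes into independent contributions of the two strata---or compare $[\cH^{g\prime\prime}_{A_{1,1}}]$ directly with the product $[\cP^g_{0,-2}]\smile[\cH^g_{g-1,\ast}]$ using Eichler's criterion (Proposition \ref{Eichler}). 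Verifying that this extra contribution is nonzero, so that the enlarged intersection matrix attains full rank $3$, is the crux of the argument.
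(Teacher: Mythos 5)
Your reduction to $\cF_g$ and your treatment of the generic case agree in substance with the paper: the paper makes the same reduction in one line (the cycles lie in the branch divisor $\cH^g_{0,-2}$, so the preimages of their irreducible components remain irreducible), and your more detailed argument via the orientation-reversing reflection $r_v\in\widetilde{\rO}(\Lambda_g)$ and injectivity of $u_*$ on invariant classes is a correct elaboration of it. For $g\not\equiv 2,3\pmod 4$ you use exactly the paper's $2\times 2$ pairing matrix with the hyperelliptic and unigonal families, and the determinant computation is right.

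The genuine gap is in the exceptional cases $g\equiv 2,3\pmod 4$, which you explicitly leave open: you name two candidate third functionals (a binodal count refined by divisibility type, or a comparison with the product $[\cP^g_{0,-2}]\smile[\cH^g_{g-1,\ast}]$) but carry out neither, and you yourself flag the required nonvanishing as ``the crux.'' Neither sketch is routine to complete; in particular the cup product of the two NL divisors is a sum over \emph{all} components of their intersection with multiplicities, so it does not by itself isolate $[\cH^{g''}_{A_{1,1}}]$. The paper closes these cases with two soft observations that your proposal is missing. First, the hyperelliptic and unigonal test surfaces are \emph{disjoint} from the NL divisor $\cH^g_{g-1,\frac{g-2}{2}}$ (resp.\ $\cH^g_{g-1,\frac{g-3}{2}}$) containing the exceptional component, so their pairings with $[\cH^{g''}_{A_{1,1}}]$ are exactly zero --- the structural zero column you gesture at but do not exploit. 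Second, one does not need a computed third number: since the Baily--Borel boundary of $\cF_g$ is only one-dimensional, there exists a general two-dimensional \emph{proper} family $\phi_4:\cX_4\to\Sigma_4$ of K3 surfaces meeting $\cH^{g''}_{A_{1,1}}$ in a nonzero finite number of points, giving $\Sigma_4\cdot\cH^{g''}_{A_{1,1}}=b\neq 0$. The resulting $3\times 3$ matrix, with rows $(7656,\,0,\,864)$, $(33480,\,0,\,816)$ and $(a,\,b,\,c)$, is nondegenerate for \emph{any} values of $a$ and $c$, so no refined enumerative computation is needed. Without these two steps (or a completed version of one of your sketches), your argument establishes the proposition only for $g\equiv 0,1\pmod 4$.
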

\begin{proof}
As the covering map $\widetilde{\cF}_g\to \cF_g$ is branched over $\cH^g_{0,-2}$ containing $\cH^g_{A_{1, 1}}\cup\cH^g_{A_2}$ , we know that the preimages of the irreducible components of $\cH^g_{A_{1, 1}}$ or $\cH^g_{A_2}$ remain irreducible. It suffices to show that the fundamental classes of irreducible components of $\cH^g_{A_{1, 1}}$ and $\cH^g_{A_2}$ are linearly independent in $\rH^4(\cF_g, \QQ)$. According to Proposition \ref{irred-comp}, the irreducible components of $\cH^g_{A_{1,1}}$ and $\cH^g_{A_{2}}$ are described as follows: 
\begin{itemize}
    \item [i)] if $g\equiv 0 ~\hbox{or} ~1\mod 4$, $\cH^g_{A_{1,1}}$ and $\cH^g_{A_{2}}$ are irreducible. 
    \item [ii)] if $g\equiv 2\mod 4$, $\cH^g_{A_{2}}$ is irreducible and $\cH^g_{A_{1,1}}=\cH_{A_{1,1}}^{g'}\cup \cH_{A_{1,1}}^{g''}$. 
    \item [iii)] if $g\equiv 3\mod 4$, $\cH^g_{A_{2}}$ is irreducible and $\cH^g_{A_{1,1}}=\cH_{A_{1,1}}^{g'}\cup \cH_{A_{1,1}}^{g'''}$. 
\end{itemize}
   
In case i), we take the testing families $\phi_i:\cX_i\to \Sigma_i, i=1,2,3$ depending on the parity of $g$. We have the following intersection matrix
\begin{center}
    \begin{tabular}{c|cc}
      & $\cH_{A_{1,1}} $ &  $\cH_{A_2}$ \\\hline
      $\Sigma_i$  &  7656  & 864 \\
      $\Sigma_3$  &  33480 & 816
    \end{tabular} 
\end{center} for $i=1$ or $2$. As the matrix is non-degenerate, we can conclude our assertion easily. 

In case ii), one can take a general two-dimensional proper family $\phi_4:\cX_4\to \Sigma_4$  of K3 surfaces in $\scF_g$ such that there exist finitely many fibers of $\phi_4$ lying in $\cH^{g''}_{A_{1,1}}$. This is valid because the boundary of  the Baily-Borel compactifcation of $\cF_g$ is only $1$-dimensional.  Next, note that $\Sigma_1$ and $\Sigma_3$ do not intersect with the NL-locus $\cH^g_{g-1,\frac{g-2}{2}}$ which contains $\cH^{g''}_{A_{1,1}}$.  The corresponding intersection matrix  of $\Sigma_i$ with  $\cH^g_{A_{1,1}}$ and $\cH^g_{A_2}$ is given by 
\begin{center}
    \begin{tabular}{c|ccc}
      & $\cH^{g'}_{A_{1,1}} $ & $\cH_{A_{1,1}}^{g''} $ &  $\cH^g_{A_2}$  \\\hline
      $\Sigma_1$  &  7656  & 0 & 864 \\
       $\Sigma_3$  &  33480 & 0 & 816 \\
       $\Sigma_4$  &  $a$ & $b\neq 0$ & $c$
    \end{tabular} 
\end{center} 
which is clearly non-degenerate. Therefore, the fundamental classes of $\cH_{A_{1,1}}^{g'}, \cH_{A_{1,1}}^{g''}$ and $\cH^g_{A_2}$ are linearly independent. 

In case iii), one can obtain a similar intersection matrix. The assertion follows similarly.  
\end{proof}

\subsection{NL-numbers via Kudla's modularity conjecture}
We will provide an alternative way to compute the binodal and cuspidal fibers on a family of unigonal K3 surfaces which  uses less classical enumerative geometry.  For any two-dimensional nontrivial  family of unigonal K3 surfaces (with only ADE singularities)
$$f:\cX\to \Sigma $$
over a smooth projective surface $\Sigma$, it induces a map $\psi:\Sigma\to \cF_U\cong \Gamma_U \backslash \rD_{U^\perp}$. To compute the Noether-Lefschetz numbers with respect to $f$, we can applying Theorem \ref{Modularity} (see also Example \ref{Modularity-unimodular} and Example \ref{Elliptic}) to $\cF_U$ and  get that  the generating series 
\begin{equation}
\begin{aligned}
      \Theta_\psi(\tau): &=\sum\limits_{\beta\geq 0} (\deg(\psi^\ast Z(\beta))) e^{2\pi i \mathrm{tr} (\beta \tau )}  \\ &=\sum\limits_{k,l,m\geq 0} N_{k,l,m}\tilde{q}^kp^lq^m
\end{aligned}
\end{equation}
is a classical scalar-valued Siegel modular form of weight $10$ and genus $2$, where we write $\tau=\left(\begin{array}{ccc} z_1  & z_2 \\ z_2 & z_3 \end{array}\right)\in \HH_2$,  $\tilde{q}=e^{2\pi i z_1}$, $p=e^{2\pi i z_2}$ and $q=e^{2\pi i z_3}$ for $z_1,z_3\in \HH, z_2\in \CC$.  Then we see that all the Noether-Lefschetz numbers can be read off from the Fourier coefficients of some Siegel modular form. The geometric meaning of  some coefficients $N_{k,l,m} $ are given as below (see also Proposition \ref{lattice-cuspidal-binodal}):
\begin{itemize}
    \item the constant term $N_{0,0,0}=(R^2f_\ast\cO_\cX)^2 $ is the  self-intersection of the Hodge line bundle on $\Sigma$. 
    \item $N_{0,0,1}=\deg((\psi^\ast \lambda )|_\Delta)$ is the degree of the Hodge line bundle on the discriminant curve  $\Delta\subseteq \Sigma$.   
    \item $N_{1,1,1}=\deg(\psi^\ast Z(\left(\begin{array}{ccc} -2  & 1 \\ 1 & -2 \end{array}\right) ))$ is twice the  number of cuspidal fibers. 
    \item $N_{1,0,1}=\deg(\psi^\ast(Z(\left(\begin{array}{ccc} -2  & 0 \\ 0 & -2 \end{array}\right) )))$ is  twice the number of binodal fibers. 
\end{itemize}

Next, due to Igusa's work, a Siegel modular form of weight $10$ must be a linear combination of $E_4^{(2)}E_6^{(2)}$ and $\chi_{10}$. Here,  $E_{n}^{(2)}$ is the Siegel Eisenstein series  of weight $n$ and genus $2$ defined by
\begin{align*}
    E_n^{(2)}(\tau)=\frac{1}{2\zeta(n)} \sum\limits_{(c,d)} \det(c\tau+d)^{-n}, \tau\in \HH_2
\end{align*}
where summation runs over the bottom row of a complete system of representatives $(c,d)$
of elements in $\{ \left(\begin{array}{ccc} \ast  & \ast  \\ 0 & \ast \end{array}\right) \in \Sp_4(\ZZ)\}\backslash \Sp_{4}(\ZZ)$; $\chi_{10}$ is the Igusa cusp form defined by the infinite product 
\begin{equation}   
\begin{aligned}
\chi_{10}(\tau)& =\tilde{q} p q\prod\limits_{(r,s,t)> 0} (1-\tilde{q}^rp^s q^t)^{c(4rt-s^2)}  \\
&=\tilde{q}pq-2\tilde{q}q-16 \tilde{q}pq^2+\cdots 
\end{aligned}
\end{equation} 
where $(r,s,t)> 0$ means either $r>0, t\geq 0$ or $r\geq 0,t>0$ or $r=t=0, s<0$;  $c(m)$ is the Fourier coefficient of a modular form of weight $-\frac{1}{2}$ given by 
\begin{equation}\label{wt-1/2}
    \sum\limits_{m\geq 0} c(m)q^m=2q^{-1}+20-128q^3+216q^4-1026q^7+1618q^8+\cdots.
\end{equation}
See also \cite[\S 0.2]{OP18} for the explicit construction of \eqref{wt-1/2}.  This provides  a way to construct many testing surfaces which  implies   Proposition \ref{Linearindep}.

\begin{proposition}
Suppose there exists a nontrivial family of Weierstrass K3 surfaces with only trivial automorphisms (i.e. the identity and the involution) over a smooth projective surface  $\Sigma$ which induces a morphism  $\psi: \Sigma\rightarrow \cF_U$.  If the self-intersection of the pullback $\psi^\ast$ of the Hodge line bundle on $\Sigma$ is not divisible by $17$ or $23$, then $\widetilde{\cH}_{A_{1,1}}$ and $\widetilde{\cH}_{A_2}$ are linearly independent in $\rH^4(\widetilde{\cF}_g, \QQ)$. As a consequence, one can obtain Proposition \ref{Linearindep}. 
\end{proposition}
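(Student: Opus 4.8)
The plan is to run the Weierstrass family through Kudla's modularity theorem, extract a single linear relation between the binodal and cuspidal numbers, and then produce an integrality obstruction. First I would attach to $\psi\colon\Sigma\to\cF_U$ the generating series $\Theta_\psi(\tau)=\sum_{\beta\geq 0}\deg(\psi^*Z(\beta))\,e^{2\pi i\,\mathrm{tr}(\beta\tau)}$. Because $U^\perp\cong U^{\oplus 2}\oplus E_8^{\oplus 2}$ is unimodular, Example \ref{Modularity-unimodular} and Theorem \ref{Modularity} guarantee that $\Theta_\psi$ is a scalar Siegel modular form of weight $10$ and genus $2$, so by Igusa's theorem $\Theta_\psi=a\,E_4^{(2)}E_6^{(2)}+b\,\chi_{10}$. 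Comparing constant terms gives $a=N_{0,0,0}=(\psi^*\lambda)^2=:s$, since $\chi_{10}$ is a cusp form.

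Next I would use the dictionary recalled above: $N_{1,1,1}=\deg\psi^*Z\!\left(\begin{smallmatrix}-2&1\\1&-2\end{smallmatrix}\right)=2\,(\Sigma\cdot\cH_{U,A_2})$ and $N_{1,0,1}=\deg\psi^*Z\!\left(\begin{smallmatrix}-2&0\\0&-2\end{smallmatrix}\right)=2\,(\Sigma\cdot\cH_{U,A_{1,1}})$. The three coefficient functionals $N_{0,0,0},N_{1,1,1},N_{1,0,1}$ live on the two-dimensional space spanned by $E_4^{(2)}E_6^{(2)}$ and $\chi_{10}$, hence satisfy exactly one linear relation. Reading the $\chi_{10}$-coefficients $1$ and $-2$ off its product expansion, and writing $c_{1,1,1},c_{1,0,1}$ for the corresponding coefficients of $E_4^{(2)}E_6^{(2)}$, that relation is $N_{1,0,1}+2N_{1,1,1}=(2c_{1,1,1}+c_{1,0,1})\,N_{0,0,0}$, which I will show reads
\begin{equation*}
(\Sigma\cdot\cH_{U,A_{1,1}})+2\,(\Sigma\cdot\cH_{U,A_2})=35112\,s .
\end{equation*}
On the unigonal locus $\cH_{U,A_{1,1}},\cH_{U,A_2}$ are the restrictions of $\cH^g_{A_{1,1}},\cH^g_{A_2}$, so this is a relation among the intersection numbers of $\Sigma$ with the genus-$g$ cycles. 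As a sanity check, the family $\phi_3$ of Proposition \ref{NL-unigonal} satisfies $33480+2\cdot 816=35112$, forcing $s=1$ there.

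Now I would argue by contradiction, reducing first from $\widetilde{\cF}_g$ to $\cF_g$ exactly as in the proof of Proposition \ref{Linearindep}: the cover $\widetilde{\cF}_g\to\cF_g$ sends components to components over $\cH^g_{0,-2}$, so it suffices to show $\cH^g_{A_{1,1}},\cH^g_{A_2}$ are independent. In the irreducible case a dependence reads $[\cH^g_{A_2}]=\kappa[\cH^g_{A_{1,1}}]$, whence every testing family realizes the same ratio $(\Sigma\cdot\cH^g_{A_2})/(\Sigma\cdot\cH^g_{A_{1,1}})=\kappa$; testing against the hyperelliptic family $\Sigma_1$ of the previous subsection forces $\kappa=864/7656=36/319$. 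Substituting into the displayed relation gives $(\Sigma\cdot\cH^g_{A_{1,1}})\cdot\frac{319+2\cdot 36}{319}=35112\,s$, that is
\begin{equation*}
\Sigma\cdot\cH^g_{A_{1,1}}=\frac{35112\cdot 319}{391}\,s,\qquad 391=17\cdot 23 .
\end{equation*}
Since $35112\cdot 319=2^3\cdot 3\cdot 7\cdot 11^2\cdot 19\cdot 29$ is coprime to $391$ while the left-hand side is a non-negative integer, this forces $17\cdot 23\mid s$, contradicting the hypothesis that $s$ is divisible by neither $17$ nor $23$. Hence $\cH^g_{A_2},\cH^g_{A_{1,1}}$, and therefore $\widetilde{\cH}_{A_2},\widetilde{\cH}_{A_{1,1}}$, are linearly independent; the reducible cases $g\equiv 2,3\pmod 4$ follow by adjoining the auxiliary family $\Sigma_4$ as in Proposition \ref{Linearindep}.

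The main obstacle is the explicit evaluation of $c_{1,1,1}$ and $c_{1,0,1}$, which produces the constant $35112$. I would compute $E_4^{(2)}$ as the genus-$2$ theta series of the $E_8$ lattice, so that its coefficients at the two indices count pairs of $E_8$-roots with prescribed inner product, namely $240\cdot 56=13440$ and $240\cdot 126=30240$; and I would read the positive-definite coefficients of $E_6^{(2)}$ off its first Fourier--Jacobi coefficient, the weight-$6$ index-$1$ Jacobi--Eisenstein series, whose coefficients are proportional to the Cohen numbers $H(5,3)=\tfrac23$ and $H(5,4)=\tfrac52$, normalized by the discriminant-$0$ value $-504$ inherited from $E_6^{(1)}$. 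The convolution formula for the product then yields $c_{1,1,1}=57792$ and $c_{1,0,1}=-45360$, whence $2c_{1,1,1}+c_{1,0,1}=70224=2\cdot 35112$. Everything else is bookkeeping, and the two primes $17,23$ enter only at the final coprimality step, through $7656+2\cdot 864=24\cdot 17\cdot 23$.
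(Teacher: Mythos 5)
Your proof of the conditional implication is correct and follows essentially the same route as the paper: both arguments rest on Kudla's modularity plus Igusa's theorem that weight-$10$, genus-$2$ Siegel forms are spanned by $E_4^{(2)}E_6^{(2)}$ and $\chi_{10}$, and both extract the obstruction from the same coefficients $57792$ and $-45360$ together with the hyperelliptic numbers $7656$ and $864$, the primes entering through $7656+2\cdot 864=24\cdot 17\cdot 23$. Your packaging differs slightly, and is arguably cleaner: the paper first deduces $a,b\in\ZZ$ (integrality of $b$ coming from $N_{1,1,1}\in\ZZ$) and phrases the conclusion as the ratio inequality $\frac{57792a+b}{-45360a-2b}\neq\frac{864}{7656}$, whereas you eliminate $b$ to get the universal relation $N_{1,0,1}+2N_{1,1,1}=70224\,N_{0,0,0}$ and then invoke only the integrality of the binodal count; the two are equivalent, and your coefficient computations (the genus-$2$ theta series of $E_8$ for $E_4^{(2)}$, the Fourier--Jacobi/Cohen-number evaluation for $E_6^{(2)}$) reproduce exactly the expansions displayed in the paper, as does your sanity check $33480+2\cdot 816=35112$ against the corollary $\Theta_{\psi_3}=E_4^{(2)}E_6^{(2)}-56160\chi_{10}$.

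There is, however, a genuine gap measured against the full statement: the clause ``as a consequence, one can obtain Proposition \ref{Linearindep}'' requires actually exhibiting a family satisfying the hypothesis, and you never produce one. The paper's proof devotes its entire second half to this: it embeds $W=\PP(2^{(9)},3^{(13)})$ by $|\cO_W(6\cdot 2^n)|$, cuts with $16$ general hyperplanes to obtain a surface $\Sigma$ inside the locus $W^\circ$ of Weierstrass K3 surfaces with only trivial automorphisms (possible because the complement of $W^\circ$ has codimension $>2$), and uses the Odaka--Oshima isomorphism together with the descent of $\cO_{W^\circ}(1)$ to the Hodge bundle $\lambda$ to conclude that $(\psi^\ast\lambda)^2$ equals the degree of $\Sigma$, which is divisible only by $2$ and $3$. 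Your observation that $\phi_3$ forces $s=1$ cannot substitute for this construction without defeating the point: you derived $s=1$ from the enumerative counts $a_2(h)=816$ and $a_{1,1}(h)=33480$ of Proposition \ref{NL-unigonal}, i.e.\ from the very classical computation this subsection is designed to bypass (and if those counts are granted, Proposition \ref{Linearindep} already follows from the intersection matrix, making the detour redundant); one would also need to know that the fibers of $\phi_3$ have only trivial automorphisms. A smaller point: in the reducible cases your appeal to $\Sigma_4$ implicitly requires the given Weierstrass family, not just $\Sigma_1$ and $\Sigma_3$, to avoid the locus $\cH^g_{g-1,\frac{g-2}{2}}$ containing the extra component; this deserves a sentence, although the paper is equally terse on that step.
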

\begin{proof}
Note that 
\begin{equation}
\begin{aligned}
      E_4^{(2)}E_6^{(2)}=~&(1+240\tilde{q}+240q+\cdots+13440\tilde{q}pq+ 30240\tilde{q}q+\cdots) \times \\&(1-504\tilde{q}-504q +\cdots+44352\tilde{q}p q+166320\tilde{q}q+\cdots)\\
      =~&1-264\tilde{q}-264q +\cdots+57792 \tilde{q}pq-45360 \tilde{q}q+\cdots . 
\end{aligned} 
\end{equation}
Kudla's modularity theorem shows that $\Theta_\psi$ must have the form
$$\Theta_\psi=aE_4^{(2)}E_6^{(2)}+b\chi_{10}$$
for some $a,b\in \QQ$. As the coefficients  $N_{0,0,0}$ and $N_{1,1,1}$ are integers, we know that $a$ and  $b$ are integers as well.  We can write 
$$\deg (\psi^{\ast} Z(\left(\begin{array}{ccc} -2  & 1 \\ 1 & -2 \end{array}\right)))= 57792a+b;~\deg \psi^\ast (Z(\left(\begin{array}{ccc} -2  & 0 \\ 0 & -2 \end{array}\right)))=-45360a -2b.$$
The assumption then implies that $\frac{57792a+b}{-45360a -2b}\neq \frac{864}{7656}$, which proves Proposition \ref{Linearindep}.

For the last statement, we need to construct such a family. The idea is to make use of Miranda's construction.  As in \S \ref{Elliptic}, we set $W=\PP(2^{(9)}, 3^{(13)})$ for convenience and one can regard  $\cF_U$ as an open subset of $W\q \SL_2$ via the isomorphism \eqref{OO-iso}.  Embed $W$ into $|\cO_{W}(6\cdot 2^n)|$  with  $n$ sufficiently large.   One can take general hyperplanes $H_1,\ldots,H_{16}$ in $|\cO_{W}(6\cdot 2^n)|$, such that they meet transversally with $W$ and their intersection  $\Sigma=W\cap H_1\cap\ldots \cap H_{16}$ is contained in the  open locus $W^\circ$ consisting of Weierstrass K3 surfaces with only trivial automorphisms.  This is valid because the complement of $W^\circ$ in $W$ has codimension $>2$.  Let $\psi:\Sigma\to \cF_U$ be the map induced by the quotient map $W^\circ\to W^\circ/ \SL_2\subseteq \cF_U$.  According to the computation in \cite[Proposition 4.1.1]{LaO19}, $\cO_{W^\circ}(1)$ descends to the Hodge line bundle $\lambda$ on $W^\circ/\SL_2$.  One can compute that  the degree of $(\psi^\ast \lambda)^2$ equal to the degree of $S$ in $|\cO_{W}(6\cdot 2^n)|$, which is only divisible by $2$ and $3$.  Thus we can conclude our assertion.

\end{proof}

As an example,  one can verify our computations for NL-numbers on $\psi_3$. 
\begin{corollary}
\begin{equation}
   \Theta_{\psi_3}=E_4^{(2)}E_6^{(2)}-56160\chi_{10}.
 \end{equation}
\end{corollary}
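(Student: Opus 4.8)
The plan is to combine Kudla's modularity theorem with Igusa's structure theorem to reduce $\Theta_{\psi_3}$ to a two-parameter family, and then to pin down the two parameters using the enumerative numbers already obtained in Proposition \ref{NL-unigonal}. First I would invoke Theorem \ref{Modularity} for the unimodular lattice $U^\perp$ (see Example \ref{Modularity-unimodular} and Example \ref{Elliptic}): this shows that $\Theta_{\psi_3}$ is a scalar-valued Siegel modular form of weight $10$ and genus $2$. By Igusa's theorem the space of such forms is spanned by $E_4^{(2)}E_6^{(2)}$ and $\chi_{10}$, so we may write $\Theta_{\psi_3}=aE_4^{(2)}E_6^{(2)}+b\chi_{10}$ for constants $a,b$; these are integers since all Fourier coefficients $N_{k,l,m}$ are integers and the relevant coefficients of the two basis forms are integral.

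Next I would read off two Fourier coefficients from the geometry. By the dictionary stated just before the corollary, $N_{1,1,1}$ equals twice the number of cuspidal fibers and $N_{1,0,1}$ equals twice the number of binodal fibers of $\psi_3$. Substituting the values $a_2(h)=816$ and $a_{1,1}(h)=33480$ from Proposition \ref{NL-unigonal} gives $N_{1,1,1}=1632$ and $N_{1,0,1}=66960$.

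Finally I would match these against the explicit expansions recorded in the previous proof: the $\tilde{q}pq$- and $\tilde{q}q$-coefficients of $E_4^{(2)}E_6^{(2)}$ are $57792$ and $-45360$, while those of $\chi_{10}$ are $1$ and $-2$. This produces the linear system
$$57792a+b=1632,\qquad -45360a-2b=66960,$$
whose unique solution is $a=1$ and $b=-56160$, yielding the asserted identity.

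The computation is largely bookkeeping, so I do not expect a genuine obstacle; the only delicate point is the correct matching of Fourier indices with the two singular loci — the off-diagonal entries of the Gram matrices $\left(\begin{array}{cc} -2 & 1 \\ 1 & -2 \end{array}\right)$ and $\left(\begin{array}{cc} -2 & 0 \\ 0 & -2 \end{array}\right)$ distinguish the cuspidal cycle from the binodal one — together with the factor of two relating $N_{1,1,1}, N_{1,0,1}$ to the honest counts $a_2(h), a_{1,1}(h)$. As a consistency check, one may instead fix $a$ directly from the constant term $N_{0,0,0}=(\psi_3^\ast\lambda)^2$ and solve for $b$ from a single Noether-Lefschetz number; the agreement of the two routes is exactly what makes this corollary a verification of the enumerative computations.
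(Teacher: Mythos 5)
Your proposal is correct and follows essentially the same route as the paper: the surrounding subsection already establishes (via Kudla's modularity and Igusa's structure theorem) that $\Theta_{\psi_3}=aE_4^{(2)}E_6^{(2)}+b\chi_{10}$, and the paper's proof of the corollary likewise pins down the coefficients by plugging in $N_{1,1,1}=2a_2(h)=1632$ and $N_{1,0,1}=2a_{1,1}(h)=66960$ from Proposition \ref{NL-unigonal}, yielding $a=1$, $b=-56160$ exactly as in your linear system. The only cosmetic difference is that the paper also records $N_{0,0,1}=264$ as an additional datum, playing the role of the consistency check you mention at the end.
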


\begin{proof}
From the computation in Proposition \ref{NL-unigonal}, some  coefficients of $\Theta_{\psi_3}$ are as follows:
\begin{itemize}
    \item $N_{0,0,1}=264$ (see Remark \ref{delta_is_discriminant}),
    \item $N_{1,1,1}=2\times a_2(h)=1632$,
    \item $N_{1,0,1}=2\times a_{1,1}(h)=66960$.
\end{itemize}
The assertion follows directly. 
\end{proof}

\subsection{Proof of the main theorem} 
Using the vanishing result, we can immediately get

\begin{lemma}\label{inj}
 	The natural map
 	\begin{equation}
 	\rH^{4}_{\DB}(\widetilde{\scS}^s_g,\QQ(2))\rightarrow \rH^{4}(\widetilde{\scS}^s_g,\QQ)
 	\end{equation}
 	is injective for $r<3$. Moreover, the map $\rH^{4}_{\DB}(\widetilde{\scS}^{sm}_g,\QQ(2))\rightarrow \rH^{4}(\widetilde{\scS}^{sm}_g,\QQ)$ is injective as well. 
 \end{lemma}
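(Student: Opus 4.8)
The plan is to reduce injectivity to a single cohomology vanishing on the base. Applying the long exact sequence (\ref{long_exact_seq_for_stacks}) to $\scX=\widetilde{\scS}_g^s$ with $q=4$ and $p=2$, the relevant segment
\[
\rH^3(\widetilde{\scS}_g^s,\CC)/F^2\rH^3(\widetilde{\scS}_g^s,\CC)\to \rH^4_{\DB}(\widetilde{\scS}_g^s,\QQ(2))\to \rH^4(\widetilde{\scS}_g^s,\QQ(2))
\]
identifies the kernel of the comparison map with the image of $\rH^3(\widetilde{\scS}_g^s,\CC)/F^2\rH^3(\widetilde{\scS}_g^s,\CC)$. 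Since $\QQ(2)$ and $\QQ$ agree as constant sheaves up to a twist, the target is $\rH^4(\widetilde{\scS}_g^s,\QQ)$, so it suffices to prove $\rH^3(\widetilde{\scS}_g^s,\CC)=0$; the same scheme will handle the $\widetilde{\scS}_g^{sm}$-statement once $\rH^3(\widetilde{\scS}_g^{sm},\CC)=0$ is known.

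To compute $\rH^3$ I would run the Leray spectral sequence
\[
E_2^{a,b}=\rH^a(\widetilde{\scF}_g^s,R^b\widetilde{\pi}_*\CC)\Rightarrow \rH^{a+b}(\widetilde{\scS}_g^s,\CC),
\]
carried out on coarse spaces, where rational cohomology of the Deligne--Mumford stacks agrees with that of their coarse moduli. Because the fibres are K3 surfaces, $R^1\widetilde{\pi}_*\CC=R^3\widetilde{\pi}_*\CC=0$, so the only contributions to total degree $3$ are $E_2^{3,0}=\rH^3(\widetilde{\scF}_g^s,\CC)$ and $E_2^{1,2}=\rH^1(\widetilde{\scF}_g^s,R^2\widetilde{\pi}_*\CC)$. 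The local system $R^2\widetilde{\pi}_*\CC$ underlies a polarized variation of Hodge structure, hence is semisimple, and the (flat, type $(1,1)$) polarization class splits off a trivial summand, giving $R^2\widetilde{\pi}_*\CC\cong\CC\oplus\underline{\Lambda_g}$, so that $E_2^{1,2}\cong\rH^1(\widetilde{\scF}_g^s,\CC)\oplus\rH^1(\widetilde{\scF}_g^s,\underline{\Lambda_g})$.

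It then remains to annihilate these three groups, and here the earlier results enter. Proposition \ref{Linearindep} furnishes exactly the linear-independence hypothesis of Lemma \ref{vanishing}, which yields both $\rH^3(\widetilde{\scF}_g^s,\CC)=0$ and $\rH^1(\widetilde{\scF}_g^s,\underline{\Lambda_g})=0$. For the constant summand $\rH^1(\widetilde{\scF}_g^s,\CC)$, I would invoke Theorem \ref{BL-vanishing} with trivial coefficients to get $\rH^1(\widetilde{\cF}_g,\CC)=0$ (here $n=19\ge 8$), and then observe that $\widetilde{\scF}_g^s$ is obtained from $\widetilde{\cF}_g$ by deleting the codimension-two locus $\widetilde{\cH}_{A_{1,1}}\cup\widetilde{\cH}_{A_2}$; purity makes the restriction $\rH^1(\widetilde{\cF}_g,\CC)\to\rH^1(\widetilde{\scF}_g^s,\CC)$ an isomorphism, whence this group also vanishes. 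Thus every $E_2$-term in total degree $3$ dies and $\rH^3(\widetilde{\scS}_g^s,\CC)=0$. The $\widetilde{\scS}_g^{sm}$-case is then obtained either by repeating the Leray computation over the corresponding open base, or, when the smooth locus differs from $\widetilde{\scS}_g^s$ only in codimension at least two, directly by purity, so that $\rH^3$ is unchanged.

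I expect the crux to be the second paragraph: justifying that $\widetilde{\pi}$ may be treated as a K3 fibration for the purpose of its direct images---namely controlling the effect of the (at most one $A_1$) degenerate fibres on the $R^b\widetilde{\pi}_*\CC$, and securing the clean splitting $R^2\widetilde{\pi}_*\CC\cong\CC\oplus\underline{\Lambda_g}$ with the variable part identified with the restriction of the local system $\underline{\Lambda_g}$ appearing in Lemma \ref{vanishing}. I would handle this by passing to the smooth quasi-polarized model, over which every fibre is a genuine smooth K3 surface and the computation above is literal, and transporting the vanishing back across the codimension-two modification, so that the vanishing-cycle contributions sit on a locus too small to affect $\rH^3$.
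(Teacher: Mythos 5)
Your proposal is correct and follows the paper's proof essentially verbatim: the exact sequence \eqref{long_exact_seq_for_stacks} reduces injectivity to $\rH^3(\widetilde{\scS}_g^s,\CC)=0$, which the paper gets from Deligne's decomposition theorem for the smooth proper family $\widetilde{\pi}$ (your Leray computation; note every fiber over $\widetilde{\scF}_g^s$ is already a smooth K3 surface, since only the quasi-polarization degenerates, so your worry about singular fibres is vacuous) together with Lemma \ref{vanishing}, whose hypothesis is Proposition \ref{Linearindep}. One small correction: for the $\widetilde{\scS}_g^{sm}$ statement the paper uses that the complement has codimension $>2$, not merely $\geq 2$ as in your purity fallback --- with codimension exactly $2$ the restriction on $\rH^3$ is only injective, with cokernel in $\rH^4_Z$.
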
 
 
\begin{proof}
Consider the smooth and proper map $\widetilde{\pi}: \widetilde{\scS}_g^s\to \widetilde{\scF}_g^s$, we can apply Deligne's decomposition theorem  \cite{De68} to get
	\begin{equation}\label{Dec}
	\rH^k(\widetilde{\scS}^s_g,\CC)\cong \bigoplus\limits_{i+j=k} \rH^i(\widetilde{\scF}^s_g; R^{j}\widetilde{\pi}_\ast (\CC)).
	\end{equation}	
According to Proposition \ref{vanishing}, we have  
$$\rH^{3}(\widetilde{\scS}_{g}^s,\QQ)=0.$$
The assertion follows from the long exact sequence \eqref{long_exact_seq_for_stacks}. The last assertion holds because $\widetilde{\scS}^{sm}_g$ has codimension $>2$ in $\widetilde{\scS}^{s}_g$.
\end{proof}

For each $\alpha\in \CH^2(\widetilde{\scS}^s_g)$,  according to \cite{BL17}, there exists $m\in\QQ$ such that 
\begin{equation}\label{fran}
\cl_{\rm B}(\alpha-mc_{2} (\cT_{\widetilde{\pi}}))\in \rH^4(\widetilde{\scS}^s_g, \QQ)
\end{equation}
can be written as a linear combination of universal line bundles over some moduli spaces of lattice polarized oriented K3 surfaces in $\rH^4(\widetilde{\scS}^s_g,\QQ)$. Then  Lemma \ref{inj} implies that this also holds in the Deligne-Beilinson cohomology group. Thus we obtain

\begin{theorem}\label{GFC_for_oriented}
Let $\cT_{\widetilde{\pi}}$ be the relative tangent bundle of $\widetilde{\pi}:\widetilde{\scS_g^s}\to \widetilde{\scF_g^s}$. Then for any $\widetilde{\alpha} \in \CH^2(\widetilde{\scS_{g}^s)}$, there exists a rational number $m\in \QQ$ such that the class 
\begin{equation}
    \cl_{\DB}(\widetilde{\alpha}-m c_2(\cT_{\widetilde{\pi}})) \in \rH^4_{\DB}(\widetilde{\scS_g^s}, \QQ(2))
\end{equation}
is supported on a proper closed subset of $\widetilde{\scF_{g}^s}$, i.e.  the restriction of $ \cl_{\DB}(\widetilde{\alpha}-m c_2(\cT_{\widetilde{\pi}})) $ is zero on $\widetilde{\pi}^{-1}(W)$ for some open substack $W$ in $\widetilde{\scF}_g^s$. 
\end{theorem}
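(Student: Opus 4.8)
The plan is to deduce the statement from its Betti-cohomological counterpart together with the injectivity established in Lemma \ref{inj}. First I would invoke the cohomological generalized Franchetta conjecture: by \cite{BL17} there is a rational number $m$ for which the Betti cycle class $\cl_{\rm B}(\widetilde{\alpha}-mc_2(\cT_{\widetilde{\pi}}))$ can be written as a finite $\QQ$-linear combination $\sum_i n_i\,\cl_{\rm B}(\gamma_i)$, where each $\gamma_i\in\CH^2(\widetilde{\scS}_g^s)$ is (the pushforward of) a universal line bundle class over a moduli stack of lattice-polarized oriented K3 surfaces. By construction each such $\gamma_i$ is supported on $\widetilde{\pi}^{-1}(D_i)$ for a proper closed substack $D_i\subsetneq \widetilde{\scF}_g^s$, so the same $m$ will be the rational number asserted in the theorem.

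The key step is to upgrade this identity from Betti to Deligne-Beilinson cohomology. Recall from Section \ref{Sec4} that the DB-cycle class map is functorial and that the natural map $\rH^4_{\DB}(\widetilde{\scS}_g^s,\QQ(2))\to \rH^4(\widetilde{\scS}_g^s,\QQ)$ carries $\cl_{\DB}$ to $\cl_{\rm B}$. Hence the class
\[
\xi:=\cl_{\DB}(\widetilde{\alpha}-mc_2(\cT_{\widetilde{\pi}}))-\sum_i n_i\,\cl_{\DB}(\gamma_i)\in \rH^4_{\DB}(\widetilde{\scS}_g^s,\QQ(2))
\]
maps to $\cl_{\rm B}(\widetilde{\alpha}-mc_2(\cT_{\widetilde{\pi}}))-\sum_i n_i\,\cl_{\rm B}(\gamma_i)=0$ in $\rH^4(\widetilde{\scS}_g^s,\QQ)$. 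Since Lemma \ref{inj} guarantees that this natural map is injective, I conclude $\xi=0$, that is, the same linear relation $\cl_{\DB}(\widetilde{\alpha}-mc_2(\cT_{\widetilde{\pi}}))=\sum_i n_i\,\cl_{\DB}(\gamma_i)$ already holds in DB cohomology.

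Finally I would restrict to the open complement $W:=\widetilde{\scF}_g^s\setminus\bigcup_i D_i$. Writing $j:\widetilde{\pi}^{-1}(W)\hookrightarrow \widetilde{\scS}_g^s$ for the open immersion and applying the functorial pullback $j^*_{\DB}$, each term $j^*_{\DB}\cl_{\DB}(\gamma_i)=\cl_{\DB}(\gamma_i|_{\widetilde{\pi}^{-1}(W)})$ vanishes, because $\gamma_i$ is supported on $\widetilde{\pi}^{-1}(D_i)$, which is disjoint from $\widetilde{\pi}^{-1}(W)$. Therefore $\cl_{\DB}(\widetilde{\alpha}-mc_2(\cT_{\widetilde{\pi}}))$ restricts to zero on $\widetilde{\pi}^{-1}(W)$, as desired. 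The genuine difficulty of the whole argument is not in this final formal step but upstream, in securing the injectivity of Lemma \ref{inj}: that rests on the vanishing $\rH^3(\widetilde{\scS}_g^s,\QQ)=0$, which via Deligne's decomposition reduces to $\rH^3(\widetilde{\cF}_g^s,\CC)=0$ and $\rH^1(\widetilde{\cF}_g^s,\underline{\Lambda_g})=0$, and these in turn depend on the linear independence of the NL-cycle classes (Proposition \ref{Linearindep}) proved through the delicate NL-number computations. Once that injectivity is in hand, the passage from Betti to DB cohomology for this theorem is essentially formal.
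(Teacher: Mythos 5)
Your proposal is correct and follows essentially the same route as the paper: invoke the Betti-cohomological GFC from \cite{BL17} to write $\cl_{\rm B}(\widetilde{\alpha}-mc_2(\cT_{\widetilde{\pi}}))$ as a combination of algebraic classes supported over Noether--Lefschetz loci, use the injectivity of $\rH^4_{\DB}(\widetilde{\scS}_g^s,\QQ(2))\to\rH^4(\widetilde{\scS}_g^s,\QQ)$ from Lemma \ref{inj} to promote this identity to Deligne--Beilinson cohomology, and restrict to the complement of the supporting loci. Your write-up merely makes explicit (via the auxiliary class $\xi$ and the functorial pullback $j^*_{\DB}$) the formal steps that the paper's proof leaves implicit, and you correctly locate the genuine content upstream in Lemma \ref{inj} and Proposition \ref{Linearindep}.
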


It remains to prove Theorem \ref{mainthm}.  We can assume $g>2$ so that $\scS_g^\circ\to \scF_g^\circ$ is a smooth morphism between smooth varieties. Recall that we have the following commutative diagram
\begin{equation}\label{Cart-dia}
    \xymatrix{
    \widetilde{\scS_{g}^{s}} \ar[d]^{\widetilde{\pi}}\ar[r] \ar[d] & \scS_{g} \ar[d]& \scS_{g}^{\circ} \ar@{_{(}->}[l] \ar[d]^{\pi}\\
    \widetilde{\scF_{g}^{s}} \ar@{^{(}->}[r]  \ar[r] & \scF_{g} & \scF_{g}^{\circ} \ar@{_{(}->}[l]
    }
\end{equation}
We let $\widetilde{\pi^\circ}:\widetilde{\scS_{g}^\circ}\to \widetilde{\scF}_g^\circ $ be the pullback of $\pi$ via \eqref{Cart-dia} and set $v:\widetilde{\scS_{g}^\circ}\to \scS_{g}^\circ$ to be the covering map. 
According to Theorem \ref{GFC_for_oriented},  for any class $\alpha\in \CH^2(\scS_{g}^\circ)$, there exists a rational number $m\in \QQ$ such that $\cl_{\DB}(v^\ast (\alpha)-m c_2(\cT_{\widetilde{\pi^{\circ}}}))$ is supported on a proper closed subset of $\widetilde{\scF_{g}^\circ}$.  

Applying  the push-forward map $$v_{!}: \rH^4_{\DB}(\widetilde{\scS_{g}^\circ}, \QQ(2)) \to \rH^4_{\DB}(\scS_{g}^\circ, \QQ(2))$$  to $\cl_{\DB}(v^\ast (\alpha)-m c_2(\cT_{\widetilde{\pi^{\circ}}}))$ (such a push-forward exists for any proper morphism between complex algebraic manifolds, since there exists also the theory of  Deligne-Beilinson homology, which together with Deligne-Beilinson cohomology form a twisted Poincar\'e duality theory, cf. \cite{Jannsen1988}), we find that $\cl_{\DB}(\alpha-m c_2(\cT_{\pi}))$ is likewise supported on a proper closed subset of $\scF_{g}^\circ$.  This finishes the proof of Theorem \ref{mainthm}.

\bibliographystyle {plain}
\bibliography{main}

\end{document}